\newtheorem{theorem}{Theorem}[section]
\newtheorem{proposition}[theorem]{Proposition}
\newtheorem{lemma}[theorem]{Lemma}
\theoremstyle{definition}
\newtheorem{remark}[theorem]{Remark}
\newtheorem{example}[theorem]{Example}
\newtheorem{definition}[theorem]{Definition}
\newtheorem{assumption}{Assumption}
\numberwithin{equation}{section}
\definecolor{myblue}{RGB}{0,0,128}
\def\Xint#1{\mathchoice
	{\XXint\displaystyle\textstyle{#1}}%
	{\XXint\textstyle\scriptstyle{#1}}%
	{\XXint\scriptstyle\scriptscriptstyle{#1}}%
	{\XXint\scriptscriptstyle\scriptscriptstyle{#1}}%
	\!\int}
\def\XXint#1#2#3{{\setbox0=\hbox{$#1{#2#3}{\int}$ }
		\vcenter{\hbox{$#2#3$ }}\kern-.57\wd0}}
\def\dashint{\Xint-}
\newcommand{\dx}{\,\mathrm{d}x}
\newcommand{\dy}{\,\mathrm{d}y}
\newcommand{\e}{\varepsilon}
\newcommand{\dist}{{\rm{dist}}}
\renewcommand{\L}{\mathcal{L}}
\newcommand{\w}{\omega}
\newcommand{\R}{\mathbb{R}}
\newcommand{\Z}{\mathbb{Z}}
\newcommand{\N}{\mathbb{N}}
\newcommand{\Q}{\mathbb{Q}}
\newcommand{\F}{\mathcal{F}}
\renewcommand{\P}{\mathbb{P}}
\newcommand{\overbar}[1]{\mkern 1.5mu\overline{\mkern-1.5mu#1\mkern-1.5mu}\mkern 1.5mu}
\begin{document}

\author{Matthias Ruf}
\address[Matthias Ruf]{Section de math\'ematiques, Ecole Polytechnique F\'ed\'erale de Lausanne, Station 8, 1015 Lausanne, Switzerland}
\email{matthias.ruf@epfl.ch}

\author{Mathias Sch\"affner}
\address[Mathias Sch\"affner]{Institut f\"ur Mathematik, MLU Halle-Wittenberg, Theodor-Lieser-Stra\ss e 5, 06120 Halle (Saale), Germany}
\email{mathias.schaeffner@mathematik.uni-halle.de}

\title[New homogenization results for convex integral functionals]{New homogenization results for convex integral functionals and their Euler-Lagrange equations}

\begin{abstract}	
We study stochastic homogenization for convex integral functionals
$$
u\mapsto \int_D W(\w,\tfrac{x}\e,\nabla u)\dx,\quad\mbox{where}\quad u:D\subset \R^d\to\R^m,
$$
defined on Sobolev spaces. Assuming only stochastic integrability of the map $\w\mapsto W(\w,0,\xi)$, we prove homogenization results under two different sets of assumptions, namely
\begin{itemize}
\item[$\bullet_1$] $W$ satisfies superlinear growth quantified by the stochastic integrability of the Fenchel conjugate $W^*(\cdot,0,\xi)$ and a mild monotonicity condition that ensures that the functional does not increase too much by componentwise truncation of $u$,
\item[$\bullet_2$] $W$ is $p$-coercive in the sense $|\xi|^p\leq W(\w,x,\xi)$ for some $p>d-1$.
\end{itemize}
Condition $\bullet_2$ directly improves upon earlier results, where $p$-coercivity with $p>d$ is assumed and $\bullet_1$ provides an alternative condition under very weak coercivity assumptions and additional structure conditions on the integrand. We also study the corresponding Euler-Lagrange equations in the setting of Sobolev-Orlicz spaces. In particular, if $W(\w,x,\xi)$ is comparable to $W(\w,x,-\xi)$ in a suitable sense, we show that the homogenized integrand is differentiable.

%
\end{abstract}

\maketitle
{\small
	\noindent\keywords{\textbf{Keywords:} Stochastic homogenization, convex integral functionals, Euler-Lagrange equations, Sobolev-Orlicz spaces}
	
	\noindent\subjclass{\textbf{MSC 2020:} 49J45, 49J55, 60G10, 35J92,  35J50, 35J70}
}	


\section{Introduction}

We revisit the problem of stochastic homogenization of vectorial convex integral functionals. For a bounded Lipschitz domain $D\subset \R^d$, $d\geq2$, we consider integral functionals of the form 
\begin{equation}\label{intro:int}
F_\e(\w,\cdot,D):\, W^{1,1}(D)^m\to[0,+\infty],\qquad F_\e(\w,u,D)=\int_D W(\w,\tfrac{x}\e,\nabla u(x))\dx,
\end{equation}
where $W:\Omega\times\R^d\times \R^{m\times d}\to [0,+\infty)$ is a random integrand which is stationary in the spatial variable and convex in the last variable (see Section~\ref{s.preliminaries} for the precise setting). Homogenization of \eqref{intro:int} (for convex or nonconvex integrands) in terms of $\Gamma$-convergence is a classical problem in the calculus of variations, see for instance \cite{BD98,JKO} for textbook references. Assuming qualitative mixing in the form of ergodicity, Dal Maso and Modica proved in \cite{DMMoII} that in the scalar case $m=1$ the sequence of functionals \eqref{intro:int} $\Gamma$-converges almost surely towards a deterministic and autonomous integral functional
\begin{equation}\label{intro:inthom}
F_{\rm hom}(\cdot,D):\, W^{1,1}(D)\to[0,+\infty],\qquad F_{\rm hom}(u,D)=\int_D W_{\rm hom}(\nabla u(x))\dx,
\end{equation}
provided that $W$ satisfies standard $p$-growth, that is, there exist $1<p<\infty$ and $0<c_1,c_2<\infty$ such that $W$ is $p$-coerciv in the sense that
\begin{equation}\label{intro:pgrowth1}
 c_1|z|^p-c_2\leq W(\w,x,z)\quad\mbox{for all $z\in\R^{d}$ and a.e.\ $(\w,x)\in\Omega\times\R^d$},
\end{equation}
and satisfies $p$-growth in the form
\begin{equation}\label{intro:pgrowth2}
 W(\w,x,z)\leq c_2(|z|^p+1)\quad\mbox{for all $z\in\R^{d}$ and a.e.\ $(\w,x)\in\Omega\times\R^d$}.
\end{equation}
The result was extended to the vectorial (quasiconvex) case in \cite{MeMi}. By now there are many (classical and more recent) contributions on homogenization where the $p$-growth conditions \eqref{intro:pgrowth1} and \eqref{intro:pgrowth2} are relaxed in various ways: for instance nonstandard (e.g.\ $p(x)$, $(p,q)$ or  unbounded) growth conditions \cite{BD98,DG_unbounded,JKO,Mue87,zhikov} or degenerate $p$-growth (that is $c_1,c_2$ depend on $x$ and $\inf c_1=0$ and $\sup c_2=\infty$) \cite{D'OnZe,NSS17,RR21} (see also \cite{RZ} for the case $p=1$).

In this manuscript, we relax \eqref{intro:pgrowth1} and \eqref{intro:pgrowth2} in the way that, instead of \eqref{intro:pgrowth2}, we only assume that $W$ is locally bounded in the second variable, that is -- roughly speaking -- we assume 
\begin{equation}\label{intro:relaxedgrowth}
	\mathbb{E}[W(\cdot,x,\xi)]<+\infty.
\end{equation}
In the periodic setting, M\"uller proved in \cite{Mue87}, among other things, $\Gamma$-convergence of \eqref{intro:int} assuming the stronger boundedness condition
\begin{equation}\label{intro:pgrowth2b}
	{\rm ess sup}_{x\in\R^d}W(x,\xi)<+\infty\quad\mbox{for all $\xi\in\R^{m\times d}$}
\end{equation}
and $p$-coercivity with $p>d$, that is,
\begin{equation}\label{coerciv:pgeqd}
\exists\; p>d,\; c>0:\quad c|\xi|^p-\frac1c\leq W(x,\xi).
\end{equation}
This result was significantly extended \cite{AM11,DG_unbounded} to cover unbounded integrands and certain non-convex integrands with convex growth - still assuming \eqref{coerciv:pgeqd}. In the scalar case $m=1$, condition \eqref{coerciv:pgeqd} can be significantly relaxed to $p>1$, see \cite{DG_unbounded,JKO}. Note that condition \eqref{coerciv:pgeqd}  has to two effects: on the one hand it proves compactness of energy-bounded sequences (for this any $p>1$ suffices), but at the same time the Sobolev embedding turns energy-bounded sequences to compact sequences in $L^{\infty}$, which is crucial for adjusting boundary values of energy-bounded sequences in absence of the so-called fundamental estimate.

Here, we propose two ways for relaxing condition \eqref{coerciv:pgeqd} in the vectorial setting. In particular, in Theorems~\ref{thm.pureGamma}~and~\ref{thm:constrainedGamma}  below, we provide $\Gamma$-convergence results for \eqref{intro:int} with our without Dirichlet boundary conditions essentially (the precise statements can be found in Section~\ref{s.results}) assuming \eqref{intro:relaxedgrowth} and one of the following two conditions:
\begin{itemize}
\item[(a)] a 'mild monotonicity condition' which requires that for any matrix $\xi$ and any matrix $\widetilde{\xi}$ that is obtained from  $\xi$ by setting some (or equivalently one of the ) rows to zero we have
\begin{equation}\label{intro:A4}
	W(\w,x,\widetilde{\xi})\lesssim W(\w,x,\xi)
\end{equation}
(see Assumption~\ref{(A4)} below for the details), together with superlinear growth from below
$$
\lim_{|\xi|\to\infty}\frac{W(\w,x,\xi)}{|\xi|}=+\infty
$$
quantified by the stochastic integrability of the Fenchel conjugate of $\xi\mapsto W(\w,x,\xi)$ (see Assumption~\ref{(A3)}). In particular, we do not assume $p$-growth from below for some $p>1$ and thus improve even previous results in the scalar case (where the 'mild monotonicity condition' is always satisfied; see Remark~\ref{r.comments}~(iii)). Considering for instance the integrand $|\xi|^{p(\w,x)}$, it becomes clear that the integrability of the conjugate is a very weak condition as it allows for exponents $p$ arbitrarily close to $1$ (see Example \ref{ex:amples} for details)
\item[(b)] $p$-coercivity \eqref{intro:pgrowth1} for some $p$ with $p>d-1$. This improves the findings of \cite{Mue87} and (in parts) of \cite{DG_unbounded}, where corresponding statements are proved under the more restrictive assumption $p>d$. It also enlarges the range of admissible exponents considered in \cite[Chapter 15]{JKO}, where homogenization results were proven under $(p,q)$-growth conditions with $q<p^*$ (the critical Sobolev exponent associated to $p>1$)
\end{itemize}

Some comments are in order:
\begin{enumerate}[label=(\roman*)]
	\item To the best of our knowledge, the degenerate lower bound via the integrability of the Fenchel conjugate is the most general considered so far in the literature on homogenization of multiple integrals (at least when compactness in Sobolev spaces was established). For instance, letting $W(\w,x,\xi)=\lambda(\w,x)^p|\xi|^p$ for some weight $\lambda$, it reads that $\lambda(\cdot,x)^{-p/(p-1))}\in L^1(\Omega)$, which coincides with the assumption in \cite{RR21}, where it was shown that in general this integrability is necessary to have a non-degenerate value of the multi-cell formula with this class of integrands. While in the proof of the compactness statement in Lemma \ref{l.compactness} we use the convexity of $W$, this is not needed as one can replace $W$ by its biconjugate $W^{**}$ in the same proof when applying the Fenchel-Young inequality. Hence this assumption also yields compactness in the non-convex setting.
	\item While the condition \eqref{intro:A4} yields that componentwise truncation does not increase the energy too much, it does not imply that recovery sequences can assumed to be bounded (at least not using standard arguments). Moreover, truncation does only yield boundedness instead of compactness in $L^{\infty}$. It is for this reason that our approach is restricted to the convex case, where we can directly work with a corrector that has equi-integrable energy due to the strengthened version of the ergodic theorem proven in \cite{RR21}. 
	\item Condition \eqref{intro:A4} is tailor-made for componentwise truncation. Taking truncations of the full field $u$ of the form $\varphi(|u|_0)\tfrac{u}{|u|_0}$ (with a norm $|\cdot|_0$ and a suitable cut-off $\varphi$) leads to a more complicated condition on $W$ since the value of gradient of this map additionally depends on the value of $u$. In case of the Euclidean norm, a particular choice of $\varphi$ ensures that the norm of each partial derivative of the truncated map does not increase (cf. \cite[Section 4]{CDMSZ18}), so \eqref{intro:A4} for all matrices $\widetilde{\xi}$ with $|\widetilde{\xi}e_i|\leq |\xi e_i|$ for all $i\in\{1,\ldots,d\}$ would suffice to perform  non-componentwise truncations. They have the potential to reduce the $\Gamma$-convergence analysis to bounded sequences. We expect such a condition to pave the way to non-convex energies without condition \eqref{coerciv:pgeqd}. Note that in the scalar case $m=1$, we also obtain homogenization results in the non-convex setting by applying known relaxation results to the non-convex functional, see Remark~\ref{r.onconstrained}~(iii).
	\item The $p$-coercivity with exponent $p>d-1$ needs a fine choice of cut-off functions in combination with the compact embedding of $W^{1,p}(S_1)\subset L^\infty(S_1)$, where $S_1$ denotes the $d-1$-dimensional unit sphere. This idea has already been used for example in \cite{BC16,BC17} in context of div-curl lemmas and deterministic homogenization, and in \cite{BS21} in the context of regularity and stochastic homogenization of non-uniformly elliptic linear equations. 
\end{enumerate}
For the proof of our main $\Gamma$-convergence result we follow the strategy laid out in \cite{Mue87}: the lower bound, which does not require \eqref{intro:A4} or $p$-coercivity with $p>d-1$, is achieved via truncation of the energy. However, due to the degenerate lower bound, the truncation of the integrand is not straightforward, but needs to be done carefully (see Lemma \ref{l.truncateW}). Once the energies are suitably truncated, their $\Gamma$-convergence follows via standard arguments using blow-up and the multi-cell formula. In order to pass to the limit in the truncation parameter, we show that the multi-cell formula agrees with the single-cell formula given by a corrector on the probability space (see Lemma \ref{l.F_pot_formula}) as this formula passes easily to the limit (see Lemma \ref{l.klimit}). The new assumption \eqref{intro:A4} or the relaxed $p$-coercivity enter in the proof of the upper bound when we try to provide a recovery sequence for affine functions that agree with the affine function on the boundary. While under \eqref{intro:A4} we truncate peaks of the corrector and carefully analyze the error due to this truncation using the equi-integrability of the energy density of the corrector, the $p$-growth coercivity with $p>d-1$ makes use of the Sobolev embedding on spheres into $L^{\infty}$ that we quantify in a suitable way in Lemma \ref{L:optim} via the existence of ad hoc cut-off functions that allow us to control the energetic error when we impose boundary conditions on a non-constrained recovery sequence.

\smallskip

In addition to the $\Gamma$-convergence results of Theorems~\ref{thm.pureGamma}~and~\ref{thm:constrainedGamma}, we also consider homogenization of the Euler-Lagrange system corresponding to the functional \eqref{intro:int}. In order to formulate the latter problem in a convenient sense, i.e., without using merely the abstract notion of subdifferential of convex functionals, we need to investigate two issues: 
\begin{itemize}
	\item given that the integrand $W$ is differentiable with respect to the last variable, does a minimizer of the heterogeneous functional $u\mapsto F_{\e}(\w,u,D)+\{\text{boundary conditions}\}$ satisfy any PDE?
	\item is the homogenized integrand $W_{\rm hom}$ differentiable?
\end{itemize} 
For both points, the general growth conditions rule out deriving a PDE or the differentiability of $W_{\rm hom}$ by differentiating under the integral sign. Therefore, we rely on convex analysis. The subgradient for convex integral functionals is well-known on $L^p$-spaces. In order to capture the dependence on the gradient the standard way is to rely on the chain rule for subdifferentials. However, in general this only holds when the functional under examination has at least one continuity point. Hence, working on $L^p$-spaces is not feasible except for the choice $p=\infty$, which however does not necessarily coincide with the domain of the functional. As it turns out, the correct framework are generalized Orlicz spaces (cf. Section \ref{s.orlicz}). It should be noted that the integrand defining a generalized Orlicz space has to be even (otherwise the corresponding Luxemburg-norm fails to be a norm). Since on the one hand we need that the domain of our functional $F_{\e}(\w,\cdot,D)$ is contained in some generalized Sobolev-Orlicz space, while on the other hand the domain of the functional $F_{\e}(\w,\cdot,D)$ should have interior points on that space, the integrand $W(\w,x,\cdot)$ has to be comparable to an even function. For this reason, we are only able to prove the above two points under the additional assumption that
\begin{equation}\label{intro:almosteven}
	W(\w,x,\xi)\lesssim W(\w,x,-\xi),
\end{equation}
see Assumption \ref{a.2} for the detailed formulation. A possible approach to remove this assumption would be a theory of subdifferentials on so-called Orlicz-cones (see \cite[Section 2]{DeGr}, where such a theory is initiated in a very special setting). This is however beyond the scope of this work. Nevertheless, to the best of our knowledge this is the first time that the issue of global differentiability of $W_{\rm hom}$ and of the convergence of Euler-Lagrange equations (in a random setting) is settled without any polynomial growth from above, see the recent textbook \cite{CGSW21} for results on periodic homogenization in Orlicz spaces. The corresponding results are stated in Theorem \ref{thm:PDE}. 

\smallskip

The paper is structured as follows: in Section \ref{s.preliminaries} we first recall the basic notions from ergodic theory and state the properties of generalized Orlicz spaces that will be used in the paper. Then we formulate precisely our assumptions. In Section \ref{s.results} we present the main results of the paper, while we postpone the proofs to Section \ref{s.proofs}. In the appendix we prove a representation result for the convex envelope of radial functions, a very general measurability result for the optimal value of Dirichlet problems of integral functionals with jointly measurable integrand and extend an approximation-in-energy result of \cite{EkTe} that we need to treat the convergence of Dirichlet problems in the vectorial setting.

\section{Preliminaries and notation}\label{s.preliminaries}
\subsection{General notation}
We fix $d\geq 2$. Given a measurable set $S\subset\R^d$, we denote by $|S|$ its $d$-dimensional Lebesgue measure. For $x\in\R^d$ we denote by $|x|$ the Euclidean norm and $B_{\rho}(x)$ denotes the open ball with radius $\rho>0$ centered at $x$.
The real-valued $m\times d$-matrices are equipped with the operator-norm $|\cdot|$ induced from the Euclidean norm on $\R^d$, while we write $\langle\cdot,:\rangle$ for the Euclidean scalar product between two $m\times d$-matrices. Given a function $f:T\times\R^{m\times d}\to\R$, where $T$ is an arbitrary set, we denote by $f^*$ the Legendre-Fenchel conjugate of $f$ with respect to the last variable, that is,
\begin{equation*}
	f^*(t,\eta)=\sup\{\langle\eta,\xi\rangle-f(t,\xi):\,\xi\in\R^{m\times d}\}.
\end{equation*}
For a measurable set with positive measure, we define $\dashint_S=\frac{1}{|S|}\int_S$. We use standard notation for $L^p$-spaces and Sobolev spaces $W^{1,p}$. The Borel $\sigma$-algebra on $\R^d$ will be denoted by $\mathcal{B}^d$, while we use $\mathcal{L}^d$ for the $\sigma$-algebra of Lebesgue-measurable sets.
Throughout the paper, we use the continuum parameter $\e$, but statements like $\e\to 0$ stand for an arbitrary sequence $\e_n\to 0$. Finally, the letter $C$ stands for a generic positive constant that may change every time it appears.

\subsection{Stationarity and ergodicity}
Let $\Omega=(\Omega,\mathcal{F},\mathbb{P})$ be a complete probability space. Here below we recall some definitions from ergodic theory.
\begin{definition}[Measure-preserving group action]\label{def:group-action} A measure-preserving additive group action on $(\Omega,\F,\P)$ is a family $\{\tau_z\}_{z\in\R^d}$ of measurable mappings $\tau_z:\Omega\to\Omega$ satisfying the following properties:
\begin{enumerate}[label=(\arabic*)]
	\item\label{joint} (joint measurability) the map $(\w,z)\mapsto\tau_z(\w)$ is $\mathcal{F}\otimes\mathcal{L}^d-\mathcal{F}$-measurable;
	\item\label{inv} (invariance) $\P(\tau_z F)=\P(F)$, for every $F\in\F$ and every $z\in\R^d$;
	\item\label{group} (group property) $\tau_0=\rm id_\Omega$ and $\tau_{z_1+z_2}=\tau_{z_2}\circ\tau_{z_1}$ for every $z_1,z_2\in\R^d$.
\end{enumerate}
If, in addition, $\{\tau_z\}_{z\in\R^d}$ satisfies the implication
\begin{equation*}
	\mathbb{P}(\tau_zF\Delta F)=0\quad\forall\, z\in\R^d\implies \mathbb{P}(F)\in\{0,1\},
\end{equation*}
then it is called ergodic.
\end{definition}
\begin{remark}\label{r.stationary_extension}
	As noted in \cite[Lemma 7.1]{JKO}, the joint measurability of the group action implies that for every set $\Omega_0$ of full probability there exists a subset ${\Omega}_1\subset\Omega_0$ of full probability that is invariant under $\tau_z$ for a.e. $z\in\R^d$. In particular, if $\bar f:\Omega\to\R$ is a function that satisfies a given property almost surely, then the stationary extension $f:\Omega\times\R^d\to\R$ defined by $f(\w,x)=\bar f(\tau_x\w)$ satisfies the same property almost surely for a.e. $x\in\R^d$.
\end{remark}

We recall the following version of the ergodic theorem that is crucial for our setting (see \cite[Lemma 4.1]{RR21}).
\begin{lemma}\label{l.weakL1}
Let $g\in L^1(\Omega)$ and $\{\tau_z\}_{z\in\R^d}$ be a measure-preserving, ergodic group action. Then for a.e. $\w\in\Omega$ and every bounded, measurable set $B\subset\R^d$ the sequence of functions $x\mapsto g(\tau_{x/\e}\w)$ converges weakly in $L^1(B)$ as $\e\to 0$ to the constant function $\mathbb{E}[g]$.
\end{lemma}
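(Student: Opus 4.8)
The statement is a quantitative strengthening of the (multiparameter) Birkhoff ergodic theorem: it upgrades a.e.\ convergence of spatial averages to \emph{weak} $L^1$-convergence of the rescaled stationary field $G_\e(x):=g(\tau_{x/\e}\w)$ itself. The plan is to combine the Dunford--Pettis compactness criterion in $L^1$ with the classical multiparameter ergodic theorem applied not only to $g$ but also to the truncations $g_M:=|g|\mathds{1}_{\{|g|>M\}}$, $M\in\N$.

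\emph{Reduction and change of variables.} Since weak convergence in $L^1$ over a large cube restricts, upon extending test functions by zero, to weak convergence in $L^1$ over every measurable subset, it suffices to prove the claim for the cubes $B=Q_R:=(-R,R)^d$, $R\in\N$, and then intersect the corresponding full-probability sets. For a measurable set $A\subset Q_R$ the substitution $y=x/\e$ gives $\int_A|G_\e(x)|\dx=\e^d\int_{A/\e}|g(\tau_y\w)|\dy$; in particular $\dashint_{Q_R}|G_\e|\dx=\dashint_{Q_R/\e}|g(\tau_y\w)|\dy$ is an average over the cube $Q_R/\e=(-R/\e,R/\e)^d$, whose sidelength tends to $\infty$.

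\emph{Equi-integrability, boundedness, identification of the limit.} Applying the multiparameter Birkhoff ergodic theorem (the iterated one-dimensional theorem along the Følner sequence of cubes) to $|g|$ and to each $g_M$ produces a set of full probability on which, as $\e\to0$, $\dashint_{Q_R/\e}|g(\tau_y\w)|\dy\to\E[|g|]$ and $\dashint_{Q_R/\e}g_M(\tau_y\w)\dy\to\E[g_M]$ for every $M\in\N$, the limits being the means since the action is ergodic. The first convergence yields $\sup_\e\|G_\e\|_{L^1(Q_R)}<\infty$. For equi-integrability, given $\delta>0$ choose $M$ with $(2R)^d\E[g_M]<\delta$; then for all sufficiently small $\e$ and every measurable $A\subset Q_R$, $\int_A|G_\e|\dx\le M|A|+\int_{Q_R}g_M(\tau_{x/\e}\w)\dx\le M|A|+2\delta$, while the finitely many remaining (large) $\e$ contribute single $L^1$-functions, which are individually equi-integrable. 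Hence $\{G_\e\}$ is equi-integrable on $Q_R$, so by Dunford--Pettis it is weakly sequentially precompact in $L^1(Q_R)$. If $h$ is any weak limit along a subsequence $\e_n\to0$, testing against $\mathds{1}_Q$ for a cube $Q\subset Q_R$ with rational vertices and applying the ergodic theorem over $Q/\e_n$ gives $\int_Q h\dx=\lim_n|Q|\dashint_{Q/\e_n}g(\tau_y\w)\dy=|Q|\,\E[g]$; ranging over the countably many such $Q$ and using the Lebesgue differentiation theorem forces $h=\E[g]$ a.e.\ on $Q_R$. Since every subsequence of $\{G_\e\}$ thus has a further subsequence converging weakly in $L^1(Q_R)$ to the same constant $\E[g]$, the whole family converges, and intersecting over $R\in\N$ finishes the proof.

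\emph{Main obstacle.} The only genuinely delicate point is the equi-integrability: a single application of Birkhoff to $g$ (or $|g|$) does not control the contribution of the large values of $G_\e$ uniformly in $\e$, and this is precisely what the ergodic theorem for the truncations $g_M$ supplies, at the cost of the bookkeeping needed to realise the countably many exceptional null sets (over the truncation levels $M$, the radii $R$, and the test cubes $Q$) as a single null set, so that one full-probability set of $\w$ works simultaneously for all bounded measurable $B$.
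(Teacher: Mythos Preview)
The paper does not prove this lemma; it merely quotes it from \cite[Lemma~4.1]{RR21}. Your argument is correct and is precisely the natural route: obtain weak $L^1$-precompactness via Dunford--Pettis by applying the additive ergodic theorem to the tail truncations $g_M=|g|\mathds{1}_{\{|g|>M\}}$ (this is the genuine point, since a single application of Birkhoff to $|g|$ controls only the total mass, not the uniform smallness on small sets), and then identify the unique weak limit by testing against indicators of a countable dense family of cubes. The handling of the countably many null sets (truncation levels $M$, radii $R$, rational test cubes $Q$) is done correctly. One small remark: when you invoke the ergodic theorem over the off-center cubes $Q/\e_n$, you are implicitly using that the centered-interval version extends to $[a/\e,b/\e]$ by subtraction and then to boxes by iteration over the $d$ coordinate directions; this is routine but could be said in one line.
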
 
\subsection{Generalized Orlicz spaces}\label{s.orlicz}
We recall here the framework for generalized Orlicz spaces tailored to our setting. Let $(T,\mathcal{T},\mu)$ be a finite measure space. Given a jointly measurable function $\varphi:T\times\R^{m\times d}\to [0,+\infty)$ satisfying for a.e. $t\in T$ the properties
\begin{itemize}
 	\item [i)] $\varphi(t,0)=0$,
 	\item[(ii)] $\varphi(t,\cdot)$ is convex and even,
 	\item[(iii)] $\lim_{|\xi|\to +\infty}	\varphi(t,\xi)=+\infty$,
\end{itemize}
we define the generalized Orlicz space
$L^{\varphi}(T)^{m\times d}$ by 
\begin{equation*}
	L^{\varphi}(T)^{m\times d}=\left\{g:T\to\R^{m\times d}\text{ measurable }:\,\int_{T}\varphi(t,\beta g(t))\,\mathrm{d}\mu<+\infty\text{ for some }\beta>0\right\},
\end{equation*}
where we identify as usual functions that agree a.e. We equip this space with the Luxemburg norm
\begin{equation*}
	\|g\|_{\varphi}:=\inf\left\{\alpha>0:\,\int_T\varphi(t,\alpha^{-1} g(t))\,\mathrm{d}\mu\leq 1\right\},
\end{equation*}	
which then becomes a Banach space \cite[Theorem 2.4]{Koz}. We further assume the integrability conditions
\begin{equation}\label{eq:Orlicz_integrability}
	\sup_{|\zeta|\leq r}\varphi(\cdot,\zeta),\sup_{|\zeta|\leq r}\varphi^*(\cdot,\zeta)\in L^1(T)\quad\text{ for all }r>0,
\end{equation}
where we recall that $\varphi^*(t,\zeta)$ denotes the Legendre-Fenchel conjugate with respect to the last variable. Then $L^{\varphi}(T)^{m\times d}$ embeds continuously into $L^1(T)^{m\times d}$. Indeed, in this case the Fenchel-Young inequality and the definition of the Luxemburg-norm yield that
\begin{equation*}
	r\frac{\|g\|_{L^1(T)}}{\|g\|_{\varphi}}\leq\int_T\varphi\left(t,\frac{g(t)}{\|g\|_{\varphi}}\right)\,\mathrm{d}\mu+\int_T\sup_{|\zeta|\leq r}\varphi^*(t,\zeta)\,\mathrm{d}\mu\leq 1+\|\sup_{|\zeta|\leq r}\varphi^*(\cdot,\zeta)\|_{L^1(T)}<+\infty.
\end{equation*}
Denoting further by $(L^{\varphi}(T)^{m\times d})^*$ the dual space, \eqref{eq:Orlicz_integrability} allows us to apply \cite[Proposition 2.1 and Theorem 2.2]{Koz2} to characterize the dual space as follows: every $\ell\in (L^{\varphi}(T)^{m\times d})^*$ can be uniquely written  as a sum $\ell=\ell_a+\ell_s$, with $\ell_a\in L^{\varphi^*}(T)^{m\times d}$ (here the $*$ denotes the Legendre-Fenchel conjugate) and $\ell_s\in S^{\varphi}(T)$, where we set
\begin{align}\label{eq:dualspace}
	c_0(\mathcal{T})&:=\{(A_n)_{n \in\N}\subset\mathcal{T}:\,A_{n+1}\subset A_n\text{ for all }n\in\N,\,\mu\left(\bigcap_{n\in\N}A_n\right)=0\},\nonumber
	\\
	S^{\varphi}(T)&:=\{\ell\in (L^{\varphi}(T)^{m\times d})^*:\,\exists (A_n)_{n\in\N}\in c_0(\mathcal{\mathcal{T}}):\,\,\ell(h\chi_{T\setminus A_n})=0\quad\forall h\in L^{\varphi}(T)^{m\times d}\,\forall n\in\N\}.
\end{align}
Fur our analysis it will be crucial that \eqref{eq:Orlicz_integrability} further implies that for any element $\ell\in S^{\varphi}(T)$ it holds $\ell|_{L^{\infty}(T)^{m\times d}}=0$. To see this, note that \eqref{eq:Orlicz_integrability} implies that $\int_{T}\varphi(t,h(t))\,\mathrm{d}\mu<+\infty$ for all $h\in L^{\infty}(T)^{m\times d}$. Now let $(A_n)_{n\in\N}\in\mathcal{A}$ be a sequence as in the above definition for the element $\ell$. Since $T$ has finite measure, it follows that $\chi_{A_n}$ converges in measure to $0$. By linearity, for all $n\in\N$ we have that
\begin{equation*}
	\ell(h)=\ell(h\chi_{T\setminus A_n})+\ell(h\chi_{A_n})=\ell(h\chi_{A_n})
\end{equation*}
and so it suffices to show that $h\chi_{A_n}\to 0$ in $L^{\varphi}(T)^{m\times d}$. Given $\sigma>0$, the sequence $\varphi(\cdot,\sigma h\chi_{A_n})$ also converges in measure to $0$ and is uniformly bounded by the integrable function $\varphi(\cdot,\sigma h)$. Hence Vitali's convergence theorem yields
\begin{equation*}
	\lim_{n\to +\infty}\int_T\varphi(t,\sigma h(t)\chi_{A_n}(t))\,\mathrm{d}\mu=0,
\end{equation*}
so that $\limsup_{n\to +\infty}\|h\chi_{A_n}\|_{\varphi}\leq\sigma^{-1}$. Since $\sigma$ can be made arbitrarily large, we obtain the claimed convergence to $0$. 

Finally, we shall make use of the following representation formula for the subdifferential of convex integral functionals: let $f:T\times\R^{m\times d}\to \R$ be a jointly measurable function that is convex in its second variable. Assume that $g\in L^{\varphi}(T)^{m\times d}$ such that that $I_f(g)\in\R$, where
$I_f(g)=\int_T f(t,g(t))\,\mathrm{d}\mu$. Then the subdifferential of $I_f$ at $g$ is given by
\begin{equation*}
	\partial I_f(g)=\left\{\ell_a\in L^{\varphi^*}(T)^{m\times d}:\,\ell_a\in\partial_{\xi}f(\cdot,g(\cdot)) \text{ a.e.}\right\}+\left\{\ell_s\in S^{\varphi}(T):\,\ell_s(h-g)\leq 0\text{ for all }h\in \text{dom}(I_f)\right\};
\end{equation*}
cf. \cite[Theorem 3.1]{Koz2} which can be applied to due \eqref{eq:Orlicz_integrability}.

\subsection{Framework and assumptions}
Let $D\subset \R^d$ be an open, bounded set  with Lipschitz boundary and let $(\Omega,\mathcal{F},\mathbb{P})$ be a complete probability space equipped with a measure-preserving, ergodic group action $\{\tau_z\}_{z\in\R^d}$. For $\e>0$, we consider integral functionals defined on $L^1(D)^m$ with domain contained in $W^{1,1}(D)^m$, taking the form
\begin{equation*}
	F_{\e}(\w,u,D)=\int_D W(\w,\tfrac{x}{\e},\nabla u(x))\,\mathrm{d}x\in [0,+\infty]
\end{equation*}
with the integrand $W$ satisfying the following assumptions:
\begin{assumption}\label{a.1} There exists a $\mathcal{F}\otimes\mathcal{B}^{m\times d}$-measurable function $\overbar{W}:\Omega\times\R^{m\times d}\to [0,+\infty)$ such that 
\begin{enumerate}[label=(A\arabic*)]
	\item\label{(A1)} $W(\w,x,\xi):=\overbar{W}(\tau_x\w,\xi)$ (stationarity and joint measurability) ;
	
	\vspace*{2.5mm}
	
	\item\label{(A2)} for a.e. $\w\in\Omega$ the map $\xi\mapsto \overbar{W}(\w,\xi)$ is convex (thus also $\xi\mapsto W(\w,x,\xi)$ for a.e. $x\in\R^d$);
	
	\vspace*{2.5mm}
	
	\item\label{(A3)} for all $r>0$ 
	\begin{align*}
		&\w\mapsto\sup_{|\eta|\leq r}\overbar{W}(\w,\eta)\in L^1(\Omega);\quad(\text{local boundedness})
		\\
		&\w\mapsto\sup_{|\eta|\leq r}\overbar{W}^*(\w,\eta)\in L^1(\Omega), \quad(\text{inhomogeneous superlinearity})
	\end{align*}
	where $\overbar{W}^*$ denotes the Legendre-Fenchel transform of $\overbar{W}$ with respect to its last variable.
	\end{enumerate}
	\noindent Moreover, $\overbar{W}$ satisfies at least one of the following two conditions \ref{(A4)} or \ref{(A5)} below.
	\begin{enumerate}[label=(A\arabic*),resume]
	\item\label{(A4)} (mild monotoncity) there exists $C>1$ and a non-negative function $\overbar{\Lambda}\in L^1(\Omega)$ such that for a.e. $\w\in\Omega$ and all $\xi\in\R^{m\times d}$, all $\widetilde{\xi}\in\R^{m\times d}$ with $e_j^T(\xi-\widetilde{\xi})\in \{0,e_j^T\xi\}$ for all $1\leq j\leq m$ it holds that
	\begin{equation*}
	\overbar{W}(\w,\widetilde{\xi})\leq C\overbar{W}(\w,\xi)+\overbar{\Lambda}(\w)
	\end{equation*}
	and thus also
	\begin{equation*}
	W(\w,x,\widetilde{\xi})\leq CW(\w,x,\xi+\Lambda(\w,x)\text{ for a.e. }x\in\R^d
	\end{equation*}
	with $\Lambda(\w,x):=\overbar{\Lambda}(\tau_x\w)$.
	\item\label{(A5)} ($p>d-1$ coercivity) there exists $p>d-1$ such that
	$$
	\overbar{W}(\omega,\xi)\geq |\xi|^p\text{ and thus }W(\w,x,\xi)\geq |\xi|^p\text{ for a.e. }x\in\R^d.
	$$
\end{enumerate}
\end{assumption}

\begin{remark}\label{r.comments}
\begin{itemize}
	\item[(i)] Due to Remark \ref{r.stationary_extension}, the above Assumptions are indeed just assumptions on $\overbar{W}$. Note that the local suprema in \ref{(A3)} can be replaced by pointwise integrability of $\overbar{W}(\cdot,\xi)$ and 
		$\overbar{W}^*(\cdot,\xi)$ since convex functions on cubes attain their maximum at the finitely many corners and moreover $\overbar{W}\geq 0$, while $\overbar{W}^*$ can be bounded from below by $\overbar{W}^*(\w,\xi)\geq -\overbar{W}(\w,0)$. Another advantage of the definition of $W$ via stationary extension is that the function $W^*(\w,x,\xi)=\overbar{W}^*(\tau_x\w,\xi)$ remains $\mathcal{F}\otimes\L^d\otimes\mathcal{B}^{m\times d}$-measurable due to the completeness of the probability space. Indeed, following verbatim the proof of \cite[Proposition 6.43]{FoLe}, one can show that for any jointly measurable function $h:\Omega\times\R^{m\times d}\to\R$ the Fenchel-conjugate with respect to the second variable is still jointly measurable. However, note that completeness of $\Omega$ is essential for the proof when one only assumes joint measurability.
	\item[(ii)] The integrability condition on the conjugate $\overbar W^*$ in \ref{(A3)} implies that $\xi\mapsto \overbar W(\w,\xi)$ is superlinear at infinity. Indeed, by definition we know that for all $\xi,\eta\in\R^{m\times d}$ we have
	\begin{equation*}
	\overbar W(\w,\xi)+\overbar W^*(\w,\eta)\geq \langle\eta,\xi\rangle.
	\end{equation*}
	For $\xi\neq 0$ and $C>0$, choosing $\eta=C\xi/|\xi|$ we deduce that
	\begin{equation}\label{eq:quantitativelineargrowth}
	\overbar W(\w,\xi)\geq C|\xi|-\sup_{|\eta|\leq C}\overbar W^*(\w,\eta),
	\end{equation}
	so that by the arbitrariness of $C$ we obtain for a.e. $\w\in\Omega$
	\begin{equation*}
	\lim_{|\xi|\to +\infty}\frac{\overbar W(\w,\xi)}{|\xi|}=+\infty.
	\end{equation*} 
	 It will be useful to have a suitable radial lower bound for $\overbar W$. Define $\widehat{\ell}$ to be the the jointly measurable function $(\w,r)\mapsto \inf_{|\eta|=r}\overbar W(\w,\eta)$\footnote{It is a Carathéodory-function. Indeed, for a.e. $\w\in\Omega$ the convexity and finiteness of $\overbar{W}$ imply continuity with respect to $\xi$, which can be used to deduce continuity with respect to $r$, while measurability with respect to $\w$ can be shown as follows: for every $t>0$ the set $\{(\w,\xi)\in \Omega\times \{|\xi|=r\}:\,\overbar{W}(\w,\xi)<t\}$ is $\mathcal{F}\otimes \mathcal{B}^{m\times d}$-measurable, so that by the measurable projection theorem the projection onto $\Omega$ is measurable. This projection is exactly $\{w\in\Omega:\,\inf_{|\xi|=r}\overbar{W}(\w,\xi)<t\}$.} and consider the convex envelope (in $\R^{m\times d}$) of the map $\xi\mapsto \widehat{\ell}(\w,|\xi|)$. We then know from the above superlinearity (with $x=0$) and Lemma \ref{l.envelope} that   $\ell(\w,|\xi|)\leq \overbar W(\w,\xi)$ for some convex, monotone, superlinear function $\ell$. Moreover, due to \eqref{eq:quantitativelineargrowth} with $C=1$, without loss of generality the function $\ell$ satisfies the lower bound
	\begin{equation}\label{eq:ell_quant_linear}
	\ell(\w,|\xi|)\geq |\xi|-\overbar \Lambda(\w).
	\end{equation}
	\item[(iii)] The monotonicity assumption \ref{(A4)} is no restriction in the scalar case $m=1$ since in this case it reduces to $\overbar W(\w,0)\leq C\overbar W(\w,\xi)+\Lambda(\w)$, which follows from \ref{(A3)}. Moreover, \ref{(A4)} is also verified in case 
	\begin{equation}\label{eq:radialgrowth}
		\ell(\w,|\xi|)\leq \overbar W(\w,\xi)\leq C\ell(\w,|\xi|)+\overbar\Lambda(\w)
	\end{equation} 
	for some superlinear function $\ell(\w,\cdot):[0,+\infty)\to[0,+\infty)$. To see this,  observe that the convex envelope of $\xi\mapsto\ell(\w,|\xi|)$ is of the form $\xi\mapsto \ell_0(\w,|\xi|)$ with a monotone, convex function $\ell_0(\w,\cdot)$ (see Lemma \ref{l.envelope}). Then $\ell_0(\w,|\xi|)\leq \ell(\w,|\xi|)$ and due to the convexity of $\xi\mapsto \overbar W(\w,\xi)$ we have $\overbar W(\w,\xi) \leq C\ell_0(\w,|\xi|)+\overbar\Lambda(\w)$. Thus \ref{(A4)} is a consequence of the following estimate:
	\begin{equation*}
	\overbar{W}(\w,\widetilde{\xi})\leq C\ell_0(\w,|\widetilde{\xi}|)+\overbar\Lambda(\w)\leq C\ell_0(\w,|\xi|)+\overbar\Lambda(\w)\leq C\overbar W(\w,\xi)+\overbar\Lambda(\w),
	\end{equation*}
	where we used the monotonicity of $\ell_0$. Finally, \ref{(A4)} comes also for free if $\overbar{W}$ is even with respect to each row of $\xi$. Indeed, in this case it follows that $\overbar{W}(\w,\xi^{-})=\overbar W(\w,\xi)$, where $\xi^-$ is any matrix that is made of $\xi$ by multiplying some rows by $(-1)$. Since the matrix $\widetilde{\xi}$ in \ref{(A4)} is the midpoint of the line connecting $\xi$ and some $\xi^{-}$ as above, by convexity $\overbar W(\w,\widetilde{\xi})\leq \tfrac{1}{2}\overbar W(\w,\xi)+\tfrac{1}{2}\overbar W(\w,\xi^-)=\overbar W(\w,\xi)$.
\end{itemize}
\end{remark}
\begin{example}\label{ex:amples}
Here we give some well-known examples that satisfy Assumptions \ref{a.1}. In what follows we exploit that for fixed $p\in (1,+\infty)$ the Fenchel conjugate of the function $\tfrac{1}{p}|\xi|^p$ is given by the function $\tfrac{1}{q}|\xi|^q$, where $q=p/(p-1)$ denotes the conjugate exponent to $p$. We further assume that the probability space $\Omega$ and the stationary, ergodic group action $\{\tau_z\}_{z\in\R^d}$ are given.
\begin{itemize}
	\item[1)] Consider the integrand $\overbar W(\w,\xi)=\tfrac{1}{p(\w)}|\xi|^{p(\w)}$ with a random exponent $p:\Omega\to (1,+\infty)$. Denoting by $q(\cdot)=\tfrac{p(\cdot)}{p(\cdot)-1}$ its conjugate exponent, $\overbar W$ satisfies Assumption \ref{a.1} whenever for all $r>0$
	\begin{equation*}
		\frac{1}{p(\cdot)}r^{p(\cdot)}\in L^1(\Omega),\quad \quad \frac{1}{q(\cdot)}r^{q(\cdot)}\in L^1(\Omega),
	\end{equation*}
	which is equivalent (recall that $\exp(p)\geq p$ for all $p\geq 1$) to the moment generating functions of $p(\cdot)$ and $q(\cdot)$ being globally finite. In particular, one can construct examples (e.g. with subgaussian tails and a corresponding decay close to $1$) with ${\rm ess\,inf\,}_{\w}p(\w)=1$ and ${\rm ess\,sup\,}_{\w}p(\w)=+\infty$ that fall in the framework of our assumptions.
	\item [2)] Consider the double-phase integrand $\overbar W(\w,\xi)=|\xi|^p+a(\w)|\xi|^q$ with $1< p<+\infty$, $1\leq q<+\infty$ and $a\in L^1(\Omega)$ a non-negative function. Then Assumption \ref{a.1} is satisfied. If $p=1$ and $q>1$, then Assumption \ref{a.1} holds if in addition $a^{1-q}\in L^1(\Omega)$. Moreover, Assumption \ref{a.1} does not restrict to polynomial growth and is also satisfied for generalized double-phase integrands of the form $\overbar W(\w,\xi)=|\xi|^p+a(\w)\exp(\exp(|\xi|^q))$ with $1< p<+\infty$, $0< q<+\infty$ and $a\in L^1(\Omega)$ a non-negative function (obviously the double exponential can be replaced by any convex and continuous function). 
\end{itemize}
\end{example}

We also study the convergence of the associated Euler-Lagrange equations for $F_{\e}(\w,\cdot,D)$ under Dirichlet boundary conditions and external forces. To show that the homogenized operator is differentiable under the general growth conditions, we need to impose an additional structural assumption. 
\begin{assumption}\label{a.2}
In addition to Assumption \ref{a.1}, assume that for a.e. $\w\in\Omega$ the map $\xi\mapsto \overbar W(\w,\xi)$ is differentiable and almost even in the sense that there exists $C\geq 1$ such that for all $\xi\in\R^{m\times d}$ we have
\begin{equation}\label{eq:almosteven}
\overbar W(\w,-\xi)\geq \frac{1}{C}\overbar W(\w,\xi)-\overbar\Lambda(\w).
\end{equation}	
Note that the corresponding integrand $W(\w,x,\xi)$ then satisfies the same properties for a.e. $x\in\R^d$ with $\overbar{\Lambda}$ replaced by $\Lambda$.
\end{assumption}
\begin{remark}\label{r.Orliczspaces}
We need \eqref{eq:almosteven} to construct a generalized (Sobolev-)Orlicz space associated to the domain of $F_{\e}(\w,\cdot,D)$ or of $h\mapsto \mathbb{E}[\overbar W(\cdot,h)]$ as follows: the function $m(\w,\xi):= \min\{\overbar W(\w,\xi),\overbar W(\w,-\xi)\}$ is even and jointly measurable. So is its convex envelope ${\rm co}(m(\w,\xi))$ (cf. Remark \ref{r.comments} (i)) and finally also the non-negative function
\begin{equation}\label{eq:defphi}
	\overbar\varphi(\w,\xi)=C\max\{0,{\rm co}(m(\w,\xi))-\overbar W(\w,0)\},
\end{equation}
where $C$ is the constant given in \eqref{eq:almosteven}. Note that
\begin{equation*}
	\frac{1}{C}\overbar\varphi(\w,\xi)\leq {\rm co}(m(\w,\xi))+\overbar W(\w,0)\leq m(\w,\xi)+\overbar W(\w,0)\leq \overbar W(\w,\xi)+\overbar W(\w,0),
\end{equation*}
while \eqref{eq:almosteven} implies the lower bound
\begin{equation*}
	\frac{1}{C}\overbar\varphi(\w,\xi)\geq {\rm co}(m(\w,\xi))-\overbar W(\w,0)\geq \frac{1}{C}\overbar W(\w,\xi)-\overbar W(\w,0)-\overbar\Lambda(\w),
\end{equation*}
which up to increasing $\Lambda$ yields the two-sided estimate
\begin{equation}\label{eq:varphi_and_W}
	\overbar W(\w,\xi)-\overbar\Lambda(\w)\leq\overbar\varphi(\w,\xi)\leq C\overbar W(\w,\xi)+\overbar\Lambda(\w).
\end{equation}
Denoting by $\overbar\varphi^*$ the conjugate function of $\overbar\varphi$ (with respect to the last variable), we deduce that for all $r>0$
\begin{equation*}
	0\leq \overbar\varphi^*(\w,0)\leq \sup_{|\eta|\leq r}\overbar\varphi^*(\w,\eta)\leq \overbar\Lambda(\w)+\sup_{|\eta|\leq r}\overbar{W}^*(\w,\eta).
\end{equation*}
In particular, combined with \eqref{eq:varphi_and_W} and \ref{(A3)}, we find that $\bar\varphi$ satisfies all assumptions stated in Section \ref{s.orlicz} for the choice $(T,\mathcal{T},\mu)=(\Omega,\mathcal{F},\mathbb{P})$ (the superlinearity at $+\infty$ follows as in Remark \ref{r.comments} (ii)), so that we can define the generalized Orlciz space $L^{\overbar{\varphi}}(\Omega)^{m\times d}$, which enjoys all properties stated in Section \ref{s.orlicz}. On the physical space $(D,\mathcal{L}^d)$ with the Lebesgue-measure instead, due to Remark  \ref{r.stationary_extension} we can consider for a.e. $\w\in\Omega$ the (random) function $\varphi_{\e}(x,\xi)=\overbar{\varphi}(\tau_{x/\e}\w,\xi)$ to define the generalized Orlicz space $L^{\varphi_{\e}}_{\w}(D)^{m\times d}$, which satisfies again all properties stated in Section \ref{s.orlicz}.  

We can further define the corresponding generalized Sobolev-Orlicz space
\begin{equation*}
	W^{1,\varphi_{\e}}_{\w}(D)^m:=\{u\in W^{1,1}(D)^m:\,\nabla u\in L^{\varphi_{\e}}_{\w}(D)^{m\times d}\},
\end{equation*}
that becomes a Banach space for the norm
\begin{equation*}
	\|u\|_{W^{1,\varphi_{\e}}_{\w}}=\|u\|_{L^1(D)}+\|\nabla u\|_{\varphi_{\e},\w}
\end{equation*}
and which embeds continuously into $W^{1,1}(D)^m$. We also define the space $W^{1,\varphi_{\e}}_{0,\w}(D)^m$ as the subspace with vanishing $W^{1,1}(D)^m$-trace. Due to the continuous embedding this subspace is closed. As we shall prove, the homogenized integrand $W_{\rm hom}$ appearing in Theorem \ref{thm.pureGamma} satisfies the deterministic analogue of \eqref{eq:almosteven}, which allows us to define the associated Sobolev-Orlicz spaces $W^{1,\varphi_{\rm hom}}(D)^m$ and $W_0^{1,\varphi_{\rm hom}}(D)^m$ for the homogenized model. We need those spaces to properly formulate our results concerning the Euler-Lagrange equations. 
\end{remark}
\section{Main results}\label{s.results}

Note that due to the probabilistic nature our main results are only true for a.e. $\w\in\Omega$. At the beginning of Section \ref{s.proofs} we describe precisely which null sets we have to exclude.

We start our presentation of the main results by stating the $\Gamma$-convergence result of the unconstrained functionals.
\begin{theorem}\label{thm.pureGamma}
Let $W$ satisfy Assumption~\ref{a.1}. Then almost surely as $\e\to 0$, the functionals $F_{\e}(\w,\cdot,D)$ $\Gamma$-converge in $L^1(D)^m$ to the functional $F_{\rm hom}:L^1(D)^m\to [0,+\infty]$ defined on $W^{1,1}(D)^m$ by
\begin{equation*}
	F_{\rm hom}(u)=\int_D W_{\rm hom}(\nabla u(x))\dx\in [0,+\infty],
\end{equation*}	
where the integrand $W_{\rm hom}:\R^{m\times d}\to [0,+\infty)$ is convex. Moreover, the following is true: 
\begin{itemize}
\item Suppose $W$ satisfies \ref{(A4)}. Then $W_{\rm hom}$ is superlinear at infinity and there exists $C_0<+\infty$ such that for all $\xi\in\R^{m\times d}$ and all $\widetilde{\xi}\in\R^{m\times d}$ with $e_j^T(\xi-\widetilde{\xi})\in \{0,e_j^T\xi\}$ for all $1\leq j\leq m$ it holds that
\begin{equation*}
	W_{\rm hom}(\widetilde{\xi})\leq C_0\,(W_{\rm hom}(\xi)+1).
\end{equation*}
\item Suppose $W$ satisfies \ref{(A5)}. Then for all $\xi\in\R^{m\times d}$
\begin{equation*}
	W_{\rm hom}(\xi)\geq |\xi|^p,
\end{equation*}
where $p>d-1$ is the exponent in \ref{(A5)}.

\end{itemize}
\end{theorem}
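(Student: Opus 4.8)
The plan is to follow the Dal Maso--Modica / M\"uller strategy of reducing the general (degenerate) integrand to a truncated one with standard $p$-growth, proving $\Gamma$-convergence for the truncated functionals by the usual blow-up method and subadditive ergodic theorem, and then passing to the limit in the truncation parameter. Concretely, I would first construct, via Lemma~\ref{l.truncateW}, a family of integrands $W_k$ with $W_k\nearrow W$ such that each $W_k$ satisfies $|\xi|-C\le W_k(\w,x,\xi)\le C_k(|\xi|+1)$ (the lower bound uses \eqref{eq:ell_quant_linear}); the delicate point here is to truncate while preserving convexity and stationarity, and to keep the lower bound $k$-independent. For each fixed $k$ the functionals $F_\e^k(\w,\cdot,D)$ have standard linear growth, so one obtains almost sure $\Gamma$-convergence to $\int_D W_{k,\rm hom}(\nabla u)\dx$ by a by-now classical argument: define $W_{k,\rm hom}$ through the multi-cell (infimum over cubes of averaged Dirichlet problems) formula, use the subadditive ergodic theorem to get the almost sure limit, and use the fundamental estimate (available under $1$-coercivity and local boundedness after truncation) to match boundary values. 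Measurability of the Dirichlet problems is handled by the appendix result mentioned in the introduction.

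The core new step is to pass $k\to\infty$. Here I would invoke Lemma~\ref{l.F_pot_formula} to identify the multi-cell limit $W_{k,\rm hom}$ with the single-cell corrector formula on the probability space, $W_{k,\rm hom}(\xi)=\inf_\chi \mathbb{E}[\overbar W_k(\cdot,\xi+\chi)]$ over potential correctors $\chi$, and then Lemma~\ref{l.klimit} to conclude $W_{k,\rm hom}\nearrow W_{\rm hom}$ with $W_{\rm hom}$ given by the same corrector formula with $\overbar W$ in place of $\overbar W_k$; monotone convergence together with the equi-integrability of the corrector's energy density (from the strengthened ergodic theorem of \cite{RR21}) is what makes this work. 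The $\Gamma$-liminf inequality for $F_\e$ follows since $F_\e\ge F_\e^k$ for every $k$, hence $\Gamma\text{-}\liminf F_\e\ge \sup_k \int_D W_{k,\rm hom}(\nabla u)\dx=\int_D W_{\rm hom}(\nabla u)\dx$ by monotone convergence; note this half needs neither \ref{(A4)} nor \ref{(A5)}. The $\Gamma$-limsup inequality is where the two alternative assumptions enter, and I expect this to be the main obstacle.

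For the upper bound I would first produce a recovery sequence for affine target functions $u_\xi(x)=\xi x$ using the corrector $\chi_\xi$: set $u_\e(x)=\xi x+\e\chi_\xi(\tau_{x/\e}\w)$ (suitably regularized), whose energy converges to $W_{\rm hom}(\xi)|D|$ by the ergodic theorem; the issue is that $u_\e$ does not attain the boundary datum $u_\xi$. Under \ref{(A5)} with $p>d-1$ I would correct this by the cut-off construction of Lemma~\ref{L:optim}: glue $u_\e$ to $u_\xi$ in a thin shell near $\partial D$ using cut-offs whose gradients are controlled in $L^p$ of $(d-1)$-dimensional spheres, exploiting the compact embedding $W^{1,p}(S_1)\hookrightarrow L^\infty$ to bound the $L^\infty$-oscillation of $u_\e-u_\xi$ on spheres and hence the energy error in the shell. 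Under \ref{(A4)} instead I would truncate the peaks of $\chi_\xi$ componentwise at a large level, use \eqref{intro:A4} to control the resulting energy increase by $C\int_D W(\w,x/\e,\nabla u_\e)\dx+\int_D\Lambda(\w,x/\e)\dx$, and use the equi-integrability of the corrector's energy density to send the truncation level to infinity after $\e\to0$; the truncated field is bounded, which permits the fundamental estimate to adjust boundary values. A standard density/relaxation argument (using convexity of $W_{\rm hom}$ and the approximation-in-energy result from the appendix) then upgrades the recovery sequence from affine functions to general $u\in W^{1,1}(D)^m$ with $\int_D W_{\rm hom}(\nabla u)<\infty$.

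Finally, the two structural properties of $W_{\rm hom}$ claimed in the theorem. The superlinearity of $W_{\rm hom}$ under \ref{(A4)} follows from \eqref{eq:ell_quant_linear} passed to the limit, or more directly by Jensen's inequality applied to the corrector formula together with the superlinear radial lower bound $\ell$ from Remark~\ref{r.comments}(ii). The mild monotonicity of $W_{\rm hom}$ under \ref{(A4)} is inherited from that of $\overbar W$: given $\xi$ and an admissible $\widetilde\xi$, apply the pointwise inequality in \ref{(A4)} to $\xi+\chi_\xi(\tau_\cdot\w)$, use that the same row-zeroing operation turns the corrector test field for $\xi$ into an admissible test field for $\widetilde\xi$ (rows of a potential field remain potential after zeroing rows), and take expectations to get $W_{\rm hom}(\widetilde\xi)\le C\,\mathbb{E}[\overbar W(\cdot,\xi+\chi_\xi)]+\mathbb{E}[\overbar\Lambda]=C\,W_{\rm hom}(\xi)+C\,\mathbb{E}[\overbar\Lambda]$, absorbing the constant into $C_0$. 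The lower bound $W_{\rm hom}(\xi)\ge|\xi|^p$ under \ref{(A5)} is immediate from the corrector formula and Jensen: $\mathbb{E}[\overbar W(\cdot,\xi+\chi)]\ge\mathbb{E}[|\xi+\chi|^p]\ge|\mathbb{E}[\xi+\chi]|^p=|\xi|^p$ since $\mathbb{E}[\chi]=0$ for potential correctors.
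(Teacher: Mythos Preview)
Your overall architecture matches the paper's: truncate $W\nearrow W_k$, prove the liminf for the truncated functionals and pass $k\to\infty$ via the single-cell corrector formula, then build the recovery sequence directly for $W$ using the corrector together with truncation (under \ref{(A4)}) or the sphere cut-off of Lemma~\ref{L:optim} (under \ref{(A5)}), and finally approximate general $u$ by piecewise affine functions. The arguments you sketch for the structural properties of $W_{\rm hom}$ (mild monotonicity via row-zeroing the corrector, $p$-coercivity via Jensen) are exactly those in Lemma~\ref{l.defW_hom}.

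There is one genuine technical gap. You claim the truncated integrands satisfy $|\xi|-C\le W_k\le C_k(|\xi|+1)$ and then invoke a ``by-now classical'' $\Gamma$-convergence result together with the fundamental estimate. But with two-sided \emph{linear} growth the natural compactness is only in $BV$, and no off-the-shelf homogenization result gives $\Gamma$-convergence on $W^{1,1}$ in that regime. The paper avoids this: Lemma~\ref{l.truncateW} produces $W_k$ with $W_k(\w,x,\xi)\le C_k(1+|\xi|^q)$ for some fixed $1<q<1^*$ (not linear from above), while keeping \ref{(A3)} so that $W_k$ is still superlinear at infinity. Crucially, the paper does \emph{not} prove full $\Gamma$-convergence of $F_\e^k$; it only proves the $\Gamma$-liminf (Proposition~\ref{p.lb_k}) via a blow-up argument in which the exponent $q<1^*$ is exactly what allows one to control the cut-off term $\int W_k(\w,\tfrac{x}{\e},\nabla\varphi\otimes(u_\e-L_{u,x_0}))$ using the compact embedding $W^{1,1}\hookrightarrow L^q$ and the $L^{1^*}$-differentiability of Sobolev functions. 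No fundamental estimate is used (or available) here. Similarly, in the limsup under \ref{(A4)} the paper does not use a fundamental estimate after truncation; it writes the glued function as a convex combination and splits the energy by convexity, then controls the cross term $W(\w,\tfrac{x}{\e},\tfrac{t}{1-t}\nabla\eta\otimes(T_su_\e-u))$ using boundedness of $T_su_\e$ and equi-integrability.

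So your plan is correct at the strategic level, but you should replace ``linear growth plus classical $\Gamma$-convergence/fundamental estimate'' by the paper's more delicate choice $1<q<1^*$ and a direct blow-up proof of the liminf only. The rest of your outline (Lemmas~\ref{l.F_pot_formula} and~\ref{l.klimit} for the $k$-limit, the corrector-based limsup with the two gluing mechanisms, and the piecewise-affine approximation) agrees with the paper.
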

\begin{remark}\label{rem:whom}
For an intrinsic formula defining $W_{\rm hom}$ see Lemma \ref{l.defW_hom}. It follows a posteriori from Theorem \ref{thm:constrainedGamma} that one can obtain $W_{\rm hom}(\xi)$ by the standard multi-cell formula
\begin{equation*}
	W_{\rm hom}(\xi):=\lim_{t\to +\infty}\inf\left\{\dashint_{(-t,t)^d}W(\w,x,\xi+\nabla u)\dx:\,u\in W^{1,1}_0((-t,t)^d,\R^m)\right\}.
\end{equation*} 
Indeed, by the change of variables $x\mapsto x/\e$ and Theorem \ref{thm:constrainedGamma} the above limit equals
\begin{equation*}
	\min_{u\in W^{1,1}_0((-1,1)^d,\R^m)}\dashint_{(-1,1)^d}W_{\rm hom}(\xi+\nabla u)\dx=W_{\rm hom}(\xi),
\end{equation*}
where the last equality follows from the convexity of $W_{\rm hom}$.

\end{remark}
%
%
%

Next we discuss the convergence of boundary value problems together with a varying forcing term added to the functionals. Given $g\in W^{1,\infty}(\R^d,\R^m)$ and $f_{\e}\in L^d(D)^m$, we define the constrained functional
\begin{equation}\label{eq:constrained}
	F_{\e,f_{\e},g}(\w,u,D)=
	\begin{cases}
		F_{\e}(\w,u,D)-\int_D f_{\e}(x)\cdot u(x)\dx &\mbox{if $u\in g+W^{1,1}_0(D)^m$,}
		\\
		\\
		+\infty &\mbox{otherwise on $L^1(D)^m$.}
	\end{cases}
\end{equation}
Due to the Sobolev embedding the integral involving $f_{\e}$ is finite for $u\in W^{1,1}(D)^m$.
\begin{theorem}\label{thm:constrainedGamma}
Let $W$ satisfy Assumption \ref{a.1}. Assume that $g\in W^{1,\infty}(\R^d)^m$ and that $f_{\e}\in L^d(D)^m$ is such that $f_{\e}\rightharpoonup f$ in $L^d(D)^m$ as $\e\to 0$. Then almost surely, as $\e\to 0$, the functionals $u\mapsto F_{\e,f_{\e},g}(\w,u,D)$ $\Gamma$-converge in $L^1(D)^m$ to the deterministic integral functional $F_{{\rm hom},f,g}:L^1(D)^m\to [0,+\infty]$ defined on $g+W^{1,1}_0(D)^m$ by
\begin{equation*}
	F_{{\rm hom},f,g}(u)=\int_D W_{\rm hom}(\nabla u(x))\dx-\int_Df(x)\cdot u(x)\dx\in \R\cup\{+\infty\}
\end{equation*}
and $+\infty$ otherwise. The integrand $W_{\rm hom}$ is given by Theorem \ref{thm.pureGamma}. Moreover, any sequence $u_{\e}$  such that 
\begin{equation}\label{thm:constrainedGamma:coer}
	\limsup_{\e\to 0}F_{\e,f_{\e},g}(\w,u_{\e},D)<+\infty
\end{equation}
is weakly relatively compact in $W^{1,1}(D)^m$ and strongly relatively compact in $L^{d/(d-1)}(D)^m$.

If \ref{(A5)} is satisfied, the above result is also valid when $f_\e \rightharpoonup f$ in $L^q(D)^m$ for some $q\geq1$ with $\frac1q<1-\frac1p+\frac1d$, and sequences $u_\e$ satisfying \eqref{thm:constrainedGamma:coer} are weakly relatively compact in $W^{1,p}(D)^m$, where $p>d-1$ is the exponent in \ref{(A5)}. 

\end{theorem}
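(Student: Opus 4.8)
\textbf{Proof strategy for Theorem~\ref{thm:constrainedGamma}.}
The plan is to obtain the $\Gamma$-convergence of the constrained functionals from the unconstrained result of Theorem~\ref{thm.pureGamma} by treating separately the three ingredients: the compactness of energy-bounded sequences, the lower bound, and the construction of a recovery sequence that attaches the boundary datum $g$. The linear term $-\int_D f_\e\cdot u$ is a continuous perturbation with respect to strong $L^1$-convergence once we control $u_\e$ in the relevant Lebesgue space, so it can be added back at the very end via the standard stability of $\Gamma$-convergence under continuous perturbations; the only care needed is to match the integrability of $f_\e$ with the compactness we can prove.

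First I would establish the compactness statements. Under Assumption~\ref{a.1} alone, the radial lower bound $\ell(\w,|\xi|)\le\overbar W(\w,\xi)$ with $\ell$ convex and superlinear (Remark~\ref{r.comments}~(ii)) together with \eqref{eq:ell_quant_linear} gives $\int_D|\nabla u_\e|\le\int_D W(\w,\tfrac{x}\e,\nabla u_\e)+\int_D\Lambda(\w,\tfrac x\e)$; by Lemma~\ref{l.weakL1} the last integral is bounded, so $\nabla u_\e$ is bounded in $L^1$, and in fact the convexity and superlinearity of $\ell$ combined with the De~la~Vall\'ee-Poussin criterion yield equi-integrability of $\nabla u_\e$, hence weak relative compactness in $W^{1,1}(D)^m$ (the boundary condition $u_\e\in g+W^{1,1}_0(D)^m$ fixes the additive constant via Poincar\'e). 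The compact embedding $W^{1,1}(D)\hookrightarrow\hookrightarrow L^{d/(d-1)}(D)$ then gives strong compactness in $L^{d/(d-1)}(D)^m$, which is exactly the integrability dual to $f_\e\in L^d$. Under \ref{(A5)} the bound $|\xi|^p\le W$ gives directly a bound on $\nabla u_\e$ in $L^p$, hence weak compactness in $W^{1,p}(D)^m$ and, by Rellich, strong compactness in $L^{p^*-\delta}$; the condition $\tfrac1q<1-\tfrac1p+\tfrac1d=\tfrac1{p^*}+(1-\tfrac1p)$ ... more simply $\tfrac1q+\tfrac1{p^*}>$ the exponent needed ... is precisely what makes $\int_D f_\e\cdot u_\e$ converge along the subsequence, using $f_\e\rightharpoonup f$ in $L^q$ and $u_\e\to u$ strongly in $L^{q'}$ with $q'<p^*$.

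Next, the lower bound. Since adding the strongly continuous term $-\int_D f_\e\cdot u$ is harmless once compactness in the right space is known, it suffices to prove $\liminf_\e F_\e(\w,u_\e,D)\ge\int_D W_{\rm hom}(\nabla u)$ whenever $u_\e\to u$ in $L^1$ with bounded energy. This is already contained in Theorem~\ref{thm.pureGamma}. For the upper bound I would first invoke Theorem~\ref{thm.pureGamma} to get, for a given $u\in g+W^{1,1}_0(D)^m$ with $\int_D W_{\rm hom}(\nabla u)<\infty$, a sequence $v_\e\to u$ in $L^1$ with $F_\e(\w,v_\e,D)\to\int_D W_{\rm hom}(\nabla u)$; then I would correct the boundary values. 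Using the intrinsic/multi-cell description of $W_{\rm hom}$ and the fact that $W_{\rm hom}$ is itself finite, convex and (by the respective hypothesis) superlinear or $p$-coercive, one constructs $v_\e-u$ which is small in $L^1$ and has equi-integrable (resp.\ $L^p$-bounded) gradient; cutting off on a thin shell near $\partial D$ and interpolating to $g$ there, the extra energy is controlled exactly as in the proof of Theorem~\ref{thm.pureGamma}: under \ref{(A4)} by truncating the peaks of the (rescaled) corrector and estimating the error through equi-integrability of the corrector's energy density, and under \ref{(A5)} by the ad hoc cut-off functions on spheres of Lemma~\ref{L:optim}. The resulting sequence lies in $g+W^{1,1}_0(D)^m$ and realizes the value $F_{{\rm hom},f,g}(u)$ after adding back $-\int_D f_\e\cdot v_\e\to-\int_D f\cdot u$.

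The main obstacle is the boundary-fitting step in the recovery sequence: one cannot simply multiply $v_\e-u$ by a cut-off because, in the absence of polynomial upper growth, the energy $W(\w,\tfrac x\e,\cdot)$ of the cut-off gradient is not controlled by the energies of the pieces. This is the same difficulty that forces the delicate arguments (Lemma~\ref{l.truncateW}, Lemma~\ref{L:optim}, equi-integrability of the corrector from \cite{RR21}) in the proof of Theorem~\ref{thm.pureGamma}, and the point here is that exactly those arguments localize near $\partial D$ and so transfer verbatim to the constrained setting; the only genuinely new bookkeeping is checking that the admissible class of $f_\e$ under \ref{(A5)} is matched by the strong convergence $u_\e\to u$ in $L^{q'}$ with $q'<p^*$, which follows from the Rellich-Kondrachov theorem applied to the $W^{1,p}$-bound.
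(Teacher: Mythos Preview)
Your plan for compactness and the liminf is essentially the paper's, but the $\Gamma$-limsup step has a genuine gap. You propose to take the unconstrained recovery sequence $v_\e$ from Theorem~\ref{thm.pureGamma} and then ``correct the boundary values'' by cutting off near $\partial D$ and interpolating to $g$, claiming that the truncation (under \ref{(A4)}) or the sphere-based cut-off (under \ref{(A5)}) of Proposition~\ref{p.recoverysequence} ``transfer verbatim''. They do not. Those arguments are tailored to the explicit corrector form $u_j+\e\phi_{\xi_j}(\cdot/\e)$ with $u_j$ \emph{affine}: under \ref{(A4)} one controls $T_s(u_{j,\e})-u_j$ via Egorov and equi-integrability of the corrector energy density, and under \ref{(A5)} Lemma~\ref{L:optim} is applied to the single function $\e\phi_{\xi_j}(\cdot/\e)\rightharpoonup 0$ in $W^{1,p}$. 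The recovery sequence for a general $u$ is obtained by a diagonal argument over piecewise-affine approximations and no longer carries this structure. Concretely, your cut-off generates the error $\nabla\eta_\delta\otimes(v_\e-g)$ on the strip $\{\dist(\cdot,\partial D)\le 2\delta\}$; since $v_\e\to u$ in $L^1$ (not to $g$) and $u-g$ vanishes only in trace, the factor $\tfrac{1}{\delta}|v_\e-g|$ is not asymptotically small, and neither \ref{(A4)} nor \ref{(A5)} provides a mechanism to bound $W(\w,\tfrac{x}{\e},\cdot)$ of it. Interpolating to $u$ instead of $g$ fails for a different reason: $F_\e(\w,u,D)$ need not be finite, so $W(\w,\tfrac{x}{\e},\nabla u)$ is not integrable on the strip.

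The paper therefore does \emph{not} fix the boundary of an unconstrained recovery. Its route is: (i) via the convex interpolation $u_t=g+t(u-g)$, reduce to targets with $F_{\rm hom}(s(u-g))<\infty$ for some $s>1$; (ii) invoke the approximation-in-energy result Lemma~\ref{l.EkTemApprox} (a vectorial extension of an Ekeland--T\'emam density lemma, proved in the appendix and itself relying on \ref{(A4)} or \ref{(A5)}) to reduce to \emph{Lipschitz} $u$ with $u=g$ on $\partial D$; (iii) approximate such $u$ by piecewise-affine $u_n$ that still satisfy $u_n=g$ on $\partial D$ (via \cite[Chapter~X, Prop.~2.9]{EkTe}); (iv) on each affine piece, glue the \emph{constrained} local recoveries from Steps~2/3 of Proposition~\ref{p.recoverysequence}. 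The missing ingredient in your sketch is precisely step~(ii); without it, the boundary-fitting problem for general $u$ cannot be reduced to the affine situation where the corrector-based arguments apply. A minor additional point: for compactness you must first show that a bound on $F_{\e,f_\e,g}$ implies one on $F_\e$; this requires absorbing the linear term via \eqref{eq:quantitativelineargrowth} and Poincar\'e as in \eqref{eq:compactnesswithforces}, before Lemma~\ref{l.compactness} can be invoked.
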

\begin{remark}\label{r.onconstrained}
\begin{itemize}
	\item[(i)] The condition $g\in W^{1,\infty}(\R^d,\R^m)$ can be weakened to Lipschitz-continuity on $\partial\Omega$. Then one can redefine $g$ on $\R^d\setminus\partial\Omega$ using Kirszbraun's extension theorem and the definition of the functional $F_{\e,f_{\e},g}$ is not affected.

\item[(ii)] By the fundamental property of $\Gamma$-convergence, Theorem \ref{thm:constrainedGamma} and the boundedness of $F_{\e,f_{\e},g}(\w,g,D)$  as $\e\to 0$, imply that up to subsequences the minimizers of $F_{\e,f_{\e},g}(\w,\cdot)$ converge to minimizers of $F_{{\rm hom},f,g}$. In particular, when $W$ is strictly convex in the last variable, then one can argue verbatim as in \cite[Propisition 4.14]{RR21} to conclude that also $W_{\rm hom}$ is strictly convex. In this case, the minimizers $u_{\e}$ at the $\e$-level and $u_0$ of the limit functional are unique and $u_{\e}\to u_0$ as $\e\to 0$ weakly in $W^{1,1}(D)^m$ and strongly in $L^{d/(d-1)}(D)^m$.
\item[(iii)] In the scalar case $m=1$ non-convexity does not play a major role whenever the assumptions ensure a relaxation result via convexification of the integrand. In particular, this holds when the non-convex integrand $h$ is stationary and ergodic (in the sense of Assumption \ref{(A1)}) and the corresponding integrand $\overbar{h}:\Omega\times\R^{d}\to [0,+\infty)$ is jointly measurable, upper semicontinuous in the second variable for a.e. $\w\in\Omega$ and satisfies the following strengthened growth assumptions:
	\begin{itemize}
		\item[(1)] $\overbar h^*(\cdot,z)\in L^1(\Omega)$ for all $z\in\R^{d}$;
		\item[(2)] $\overbar h(\w,z)\leq \Phi(z)+\overbar\Lambda(\w)$ for a.e. $\w\in\Omega$ and all $z\in\R^d$, where $\Phi$ is a finite, convex function.
	\end{itemize}
	Under these assumptions, one can  apply \cite[Corollary 3.12]{MaSb} to deduce that the weak $W^{1,1}(D)$-relaxation of the non-convex functional with integrand $ h(\w,\tfrac{x}{\e},z)$ is given by 
	\begin{equation*}
		\int_D h^{**}(\w,\tfrac{x}{\e},\nabla u)\dx\quad\text{ for all }u\in W^{1,\infty}(D).
	\end{equation*}
	Due to the growth condition (2), one can extend this representation by well-known approximation results (cf. \cite[Lemma 3.6]{Mue87} and \cite[Chapter X, Proposition 2.10]{EkTe}) to all functions $u\in W^{1,1}(D)$ (the restriction to $D$ being smooth and strongly-starshaped in \cite{Mue87} is not necessary since the left-hand side functional is lower semicontinuous with respect to weak convergence, so recovery sequences on balls are recovery sequences on all open subsets $A$ with $|\partial A|=0$). Using (1), one can show that the weak $W^{1,1}(D)$-relaxation agrees with the strong $L^1$-relaxation (cf. the proof of Lemma \ref{l.compactness}, where one has to use the Fenchel-Young inequality for the two functions $h^*$ and $h^{**}$ instead), so that by general $\Gamma$-convergence theory the $\Gamma$-limit of the non-convex energy agrees with the $\Gamma$-limit of the convexified functionals. Due (1) and (2), the function $h^{**}$ satisfies Assumption \ref{a.1}, so the results from the convex case transfer to the non-convex one. Concerning our $\Gamma$-convergence result with boundary conditions, the only difference when relaxing the heterogeneous functional comes from the extension of the relaxation from Lipschitz-functions to general functions because on $W^{1,1}_0(D)$ we have the integrand $\overbar{h}^{**}(\w,\tfrac{x}{\e},\nabla g(x)+\xi)$, where $g\in W^{1,\infty}$ is the boundary datum. By convexity this integrand can be bounded via
	\begin{equation*}
		\Phi(\nabla g(x)+\xi)+\overbar\Lambda(\tau_{x/\e}\w)\leq \sup_{|\eta|\leq 2\|\nabla g\|_{L^{\infty}}}\Phi(\eta)+\Phi(2\xi)+\overbar{\Lambda}(\tau_{x/\e}\w).
	\end{equation*}
	Then we can due the approximation using \cite[Chapter X, Propositions 2.6 \& 2.10]{EkTe} and therefore can again assume that the integrand is convex. It should be noted that we need (2) only for the convexified integrand $\overbar{h}^{**}$, while for the relaxation result in \cite{MaSb} a bound of the form $\overbar{h}(\w,z)\leq \overbar W(\w,z)$ with $\overbar{W}$ is a real-valued function that is convex in $z$ for a.e. $\w\in\Omega$ and satisfies the integrability condition $\mathbb{E}[\overbar W(\cdot,z)]<+\infty$ for all $z\in\R^d$ (cf. \ref{(A3)} and Remark \ref{r.comments} (i)) suffices. However, we were not able to find a relaxation result only assuming the last integrability condition (translated to the physical space) instead of (2) and which holds on the whole domain of the integral functional.
\end{itemize}
\end{remark}
Our final result concerns the Euler-Lagrange equations of the functionals $F_{\e,f_{\e},g}$ and $F_{{\rm hom},f,g}$. In particular, we address the differentiability of the function $W_{\rm hom}$. Here we have to rely on the stronger Assumption \ref{a.2} to be able to work in (Sobolev-)Orlicz spaces.
\begin{theorem}\label{thm:PDE}
Let $W$ satisfy Assumption \ref{a.2} and let $g,f_{\e}$ and $f$ be as in Theorem \ref{thm:constrainedGamma}. Then the following statements hold true.
\begin{itemize}
	\item[i)] Almost surely there exists a function $u_{\e}\in g+W_0^{1,1}(D)^m$ such that
	\begin{equation*}
		\int_D \partial_{\xi}W(\w,\tfrac{x}{\e},\nabla u_{\e}(x))\nabla\phi(x)- f_{\e}(x)\cdot\phi(x)\dx\begin{cases}
			=0 &\mbox{if $\phi\in W_0^{1,\infty}(D)^m$,}
			\\
			\geq 0 &\mbox{if $\phi\in W_0^{1,1}(D)^m$ and $F_{\e}(\w,u+\phi,D)<+\infty$.}
		\end{cases}
	\end{equation*}
	The above (in)equality is equivalent to $u_{\e}$ minimizing $F_{\e,f_{\e},g}(\w,\cdot,D)$.
	\item[ii)] The function $W_{\rm hom}$ is continuously differentiable.
	\item[iii)] There exists a function $u_0\in g+W_0^{1,1}(D)^m$ such that
	\begin{equation*}
		\int_D \partial_{\xi}W_{\rm hom}(\nabla u_{0}(x))\nabla\phi(x)- f(x)\cdot\phi(x)\dx\begin{cases}
			=0 &\mbox{if $\phi\in W_0^{1,\infty}(D)^m$,}
			\\
			\geq 0 &\mbox{if $\phi\in W_0^{1,1}(D)^m$ and $F_{\rm hom}(u+\phi)<+\infty$.}
		\end{cases}
	\end{equation*}
	The above (in)equality is equivalent to $u_{0}$ minimizing $F_{{\rm hom},f,g}$.
	\item[iv)] If there exists $s>1$ such that $F_{\e}(\w,su_{\e},D)<+\infty$ respectively $F_{\rm hom}(su_0)<+\infty$, then 
	\begin{equation*}
		\int_D \partial_{\xi}W(\w,\tfrac{x}{\e},\nabla u_{\e}(x))\nabla\phi(x)-f_{\e}(x)\cdot\phi(x)\dx
			=0\quad \mbox{for all $\phi\in W_{0,\w}^{1,\varphi_{\e}}(D)^m$,}
	\end{equation*}
	respectively
	\begin{equation*}
		\int_D \partial_{\xi}W_{\rm hom}(\nabla u_{0}(x))\nabla\phi(x)-f(x)\cdot\phi(x)\dx
		=0\quad \mbox{for all $\phi\in W_{0}^{1,\varphi_{\rm hom}}(D)^m$,}
	\end{equation*}
	where the spaces $W_{0,\w}^{1,\varphi_{\e}}(D)^m$ and $W_{0}^{1,\varphi_{\rm hom}}(D)^m$ are the Sobolev-Orlicz spaces associated to $W$ and $W_{\rm hom}$ (cf. Remark \ref{r.Orliczspaces}).
	\item[v)] If $W$ is strictly convex in its last variable, then the solutions $u_{\e}$ and $u_0$ are unique and almost surely, as $\e\to 0$, the random solutions $u_{\e}=u_{\e}(\w)$ converge to $u_0$ weakly in $W^{1,1}(D)^m$ and strongly in $L^{d/(d-1)}(D)^m$.
\end{itemize}
\end{theorem}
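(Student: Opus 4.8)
The plan is to reduce every assertion to the generalized (Sobolev--)Orlicz spaces built in Remark~\ref{r.Orliczspaces} and to use the representation of the subdifferential of convex integral functionals recalled at the end of Section~\ref{s.orlicz}, exploiting throughout that, by \eqref{eq:Orlicz_integrability}, every element of $S^{\varphi}$ annihilates $L^{\infty}$. For the \emph{existence and Euler--Lagrange parts i) and iii)}, I would first note that $F_{\e}(\w,g,D)<+\infty$ almost surely (since $\nabla g\in L^{\infty}$, by \ref{(A3)} and stationarity), so a minimizing sequence of $F_{\e,f_{\e},g}(\w,\cdot,D)$ has bounded energy; by the coercivity part of Theorem~\ref{thm:constrainedGamma} it is weakly relatively compact in $W^{1,1}(D)^{m}$ and strongly relatively compact in $L^{d/(d-1)}(D)^{m}$, and combining the weak $W^{1,1}$-lower semicontinuity of $u\mapsto F_{\e}(\w,u,D)$ (convexity of $W$) with the continuity of the linear term along the $L^{d/(d-1)}$-convergent subsequence yields a minimizer $u_{\e}$. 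Then $F_{\e}(\w,u_{\e},D)\le F_{\e}(\w,g,D)<+\infty$, so \eqref{eq:varphi_and_W} gives $u_{\e}\in g+W^{1,\varphi_{\e}}_{0,\w}(D)^{m}$, and $u_{\e}$ minimizes $u\mapsto I_{W}(\nabla u)-\int_{D}f_{\e}\cdot u\dx$ over this affine Banach subspace, where $I_{W}(h):=\int_{D}W(\w,\tfrac{x}{\e},h(x))\dx$. Since $I_{W}$ is continuous at $\nabla g\in L^{\infty}\subset L^{\varphi_{\e}}_{\w}(D)^{m\times d}$ (a Vitali argument as in Section~\ref{s.orlicz}, using \ref{(A3)} and \eqref{eq:varphi_and_W}) and $f_{\e}\in L^{d}(D)^{m}$ induces a bounded functional on $W^{1,\varphi_{\e}}_{0,\w}(D)^{m}\hookrightarrow L^{d/(d-1)}(D)^{m}$, Fermat's rule plus the chain and sum rules for subdifferentials produce some $\ell\in\partial I_{W}(\nabla u_{\e})$ with $\ell(\nabla\phi)=\int_{D}f_{\e}\cdot\phi\dx$ for all $\phi\in W^{1,\varphi_{\e}}_{0,\w}(D)^{m}$. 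By the representation formula of Section~\ref{s.orlicz} and the differentiability in Assumption~\ref{a.2}, the absolutely continuous part of $\ell$ equals $\partial_{\xi}W(\w,\tfrac{\cdot}{\e},\nabla u_{\e})\in L^{\varphi_{\e}^{*}}_{\w}(D)^{m\times d}$, while the singular part $\ell_{s}$ satisfies $\ell_{s}(h-\nabla u_{\e})\le 0$ for all $h$ in the domain of $I_{W}$. Testing with $\phi\in W^{1,\infty}_{0}(D)^{m}$ (where $\ell_{s}$ vanishes) gives the stated equality; testing with $\phi\in W^{1,1}_{0}(D)^{m}$ such that $F_{\e}(\w,u_{\e}+\phi,D)<+\infty$ (so $\nabla\phi\in L^{\varphi_{\e}}_{\w}$ and $\nabla u_\e+\nabla\phi\in\mathrm{dom}(I_{W})$) gives $\ell_{s}(\nabla\phi)\le 0$ and hence the stated inequality; the equivalence with minimality is the pointwise subgradient inequality for $W$. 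Part iii) is the same argument verbatim, the homogenized integrand $W_{\rm hom}$ being convex, differentiable (by ii)) and almost even (Remark~\ref{r.Orliczspaces}), so that $W^{1,\varphi_{\rm hom}}_{0}(D)^{m}$ is available.

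For \emph{part ii)} I would work on $L^{\overbar\varphi}(\Omega)^{m\times d}$ with $\mathcal{W}(h):=\mathbb{E}[\overbar W(\cdot,h)]$, which is convex and (same Vitali argument) continuous at every constant field. By Lemma~\ref{l.F_pot_formula}, $W_{\rm hom}(\xi)=\min\{\mathcal{W}(\xi+\Phi):\Phi\in V\}$ for a closed subspace $V\subset L^{\overbar\varphi}(\Omega)^{m\times d}$ of admissible corrector fields with $0\in V$, the minimum attained at some $\Phi_{\xi}$. Setting $H(\xi,\Phi):=\mathcal{W}(\xi+\Phi)+\chi_{V}(\Phi)$ (with $\chi_{V}$ the convex-analytic indicator of $V$), the elementary marginal-function identity $\partial W_{\rm hom}(\xi)=\{p:(p,0)\in\partial H(\xi,\Phi_{\xi})\}$ together with the sum and chain rules for $H$ (legitimate since $\mathcal{W}$ is continuous at the constant field $\xi\in V+\{\xi\}$ and $0\in V$) gives
\begin{equation*}
	\partial W_{\rm hom}(\xi)=\big\{\mathbb{E}[\ell_{a}]:\,\ell\in\partial\mathcal{W}(\xi+\Phi_{\xi}),\ \ell|_{V}=0\big\},
\end{equation*}
where again the singular part of $\ell$ was dropped because it annihilates constant fields. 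Since $\overbar W$ is differentiable, $\ell_{a}=\partial_{\xi}\overbar W(\cdot,\xi+\Phi_{\xi})$ for \emph{every} such $\ell$, so the right-hand side is the singleton $\{\mathbb{E}[\partial_{\xi}\overbar W(\cdot,\xi+\Phi_{\xi})]\}$ (nonempty because $\partial W_{\rm hom}(\xi)\neq\emptyset$). Hence $W_{\rm hom}$ is differentiable at every $\xi$ and, being finite and convex, continuously differentiable.

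For \emph{parts iv) and v)}: if $F_{\e}(\w,su_{\e},D)<+\infty$ for some $s>1$, then $s\nabla u_{\e}\in\mathrm{dom}(I_{W})$; writing any $h$ with $\|h-\nabla u_{\e}\|_{\varphi_{\e},\w}\le 1$ as the convex combination $\tfrac1s(s\nabla u_{\e})+\tfrac{s-1}{s}\big(\tfrac{s}{s-1}(h-\nabla u_{\e})\big)$ and using \eqref{eq:varphi_and_W} together with $\int_{D}\overbar\varphi(\tau_{x/\e}\w,k)\dx\le\|k\|_{\varphi_{\e},\w}$ for $\|k\|_{\varphi_{\e},\w}\le 1$, one sees $I_{W}$ is bounded above near $\nabla u_{\e}$, hence continuous there, so $\nabla u_{\e}$ is interior to $\mathrm{dom}(I_{W})$ and the singular part of every element of $\partial I_{W}(\nabla u_{\e})$ vanishes; the identity from i) then holds with $\ell=\partial_{\xi}W(\w,\tfrac{\cdot}{\e},\nabla u_{\e})$ for all test functions in $W^{1,\varphi_{\e}}_{0,\w}(D)^{m}$, which is the claimed equality, and likewise for $W_{\rm hom}$ via ii). For v), strict convexity of $W$ (resp. of $W_{\rm hom}$, by Remark~\ref{r.onconstrained}~(ii)) in the last variable makes $F_{\e,f_{\e},g}(\w,\cdot,D)$ (resp. $F_{{\rm hom},f,g}$) strictly convex where finite, giving uniqueness of $u_{\e}$ and $u_{0}$; the convergence $u_{\e}\to u_{0}$ then follows from the fundamental theorem of $\Gamma$-convergence, the equi-coercivity of Theorem~\ref{thm:constrainedGamma}, and uniqueness of the limit. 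The main obstacle is part ii): making the chain and sum rules for subdifferentials legitimate on $L^{\overbar\varphi}(\Omega)^{m\times d}$ — which is exactly why Assumption~\ref{a.2} and the construction of Remark~\ref{r.Orliczspaces} are needed, ensuring $\mathcal{W}$ has a continuity point in its domain — after which differentiability of $\overbar W$ collapses the subdifferential of the corrector functional to a single value; controlling the singular parts (on $D$ in i)/iv) and on $\Omega$ in ii)) through their vanishing on $L^{\infty}$ is the recurring technical ingredient.
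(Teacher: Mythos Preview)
Your proposal is correct and follows essentially the same route as the paper: Sobolev--Orlicz subdifferential calculus (chain rule, sum rule, and the representation of $\partial I_f$ from Section~\ref{s.orlicz}) for i), iii), iv), the optimal-value/marginal-function subdifferential formula on $L^{\overbar\varphi}(\Omega)^{m\times d}$ for ii) (the paper invokes \cite[Corollary~7.3]{MNRT} for exactly this), and $\Gamma$-convergence plus strict convexity for v). The only slips are cosmetic: the corrector formula you cite in ii) is Lemma~\ref{l.defW_hom} rather than Lemma~\ref{l.F_pot_formula}, and your ``$\nabla u_\e$ is interior to $\mathrm{dom}(I_W)$'' argument in iv) is an equivalent rephrasing of the paper's direct verification that $\ell_s(\pm\delta\nabla\phi)\le 0$.
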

\begin{remark}
	\begin{itemize}
\item[a)] We have the inclusions $W_0^{1,\infty}(D)^m\subset W^{1,\varphi_{\e}}_{0,\w}\cap W^{1,\varphi_{\rm hom}}_0(D)^m$ and  $\{F_{\e}(\w,\cdot,D)<+\infty\}\subset W_{\w}^{1,\varphi_{\e}}(D)^m$ and $\{F_{\rm hom}<+\infty\}\subset W^{1,\varphi_{\rm hom}}(D)^m$. Since the Sobolev-Orlicz spaces are vector spaces, this yields that the equations in iv) imply equality in i) and iii). Moreover, the equations in i) and iii) can be extended by approximation to $\varphi\in W^{1,1}_0(D)^m$, whose gradient can be approximated weakly$^*$ in $L^{\varphi_{\e}}_{\w}(D)^{m\times d}$ or $L^{\varphi_{\rm hom}}(D)^{m\times d}$, respectively, where both spaces are are regarded as subspaces of the dual space of $L^{\varphi^*_{\e}}_{\e}(D)^{m\times d}$ or $L^{\varphi^*_{\rm hom}}(D)^{m\times d}$, respectively. 

\item[b)] While the points i) and iii) imply that $u_{\e}$ and $u_0$ are distributional solutions of the PDEs $-{\rm div}(\partial_{\xi}W(\w,\tfrac{\cdot}{\e},\nabla u))=f_{\e}$ and $-{\rm div}(\partial_{\xi}W(\nabla u))=f$ respectively, they are no weak solutions in the corresponding Sobolev-Orlicz space. This problem is strongly related to the lack of density of smooth functions in Sobolev-Orlicz spaces. As a byproduct of our proof the solutions that minimize the energy satisfy 
\begin{equation*}
	\partial_{\xi}W(\w,\tfrac{\cdot}{\e},\nabla_{\e})\in L^{\varphi^*_{\e}}_{\w}(D)^{m\times d},\quad\quad \partial_{\xi}W_{\rm hom}(\nabla u_0)\in L^{\varphi^*_{\rm hom}}(D)^{m\times d},
\end{equation*}
so that in general the weak formulation of the PDE would make sense in duality as in iv), but we are not able to prove it.

\item[c)]  Concerning the integrands in Example \ref{ex:amples}, the condition in iv) is satisfied for $p(\cdot)$-Laplacians with essentially bounded exponent or double phase functionals $W(\w,x,\xi)=|\xi|^p+a(\w,\tfrac{x}{\e})|\xi|^q$ with no additional restrictions on the exponents. In a more abstract form, it suffices to have an estimate of the form $\overbar W(\w,s\xi)\leq C\overbar W(\w,\xi)+\overbar\Lambda(\w)$ for some $s>1$. In this case one can use the formula for $W_{\rm hom}$ given in Lemma \ref{l.defW_hom} to show that $W_{\rm hom}(s\xi)\leq C (W_{\rm hom}(\xi)+1)$.

\end{itemize}
\end{remark}
\section{Proofs}\label{s.proofs}
Before we start with the different proofs leading to our main results, let us comment on the null sets of $\Omega$ that we need to exclude: besides excluding the set of zero measure, where the properties of $\overbar{W}$ (or $W$) in Assumption \ref{a.1} or \ref{a.2} fail,
\begin{itemize}
	\item we will frequently apply the ergodic theorem in the form of Lemma \ref{l.weakL1} to the random field $W(\w,\tfrac{x}{\e},\xi)=\overbar W(\tau_{\frac{x}{\e}}\w,\xi)$. A priori, the null set where convergence may fail could depend on $\xi$, so let us briefly explain why this is not the case. Considering an element $\w\in\Omega$, where the ergodic theorem holds for all rational matrices $\xi\in\Q^{m\times d}$, we know that for any bounded, measurable set $E\subset\R^d$ it holds that
	\begin{equation*}
		\lim_{\e\to 0}\int_E W(\w,\tfrac{x}{\e},\xi)\dx=\mathbb{E}[\overbar W(\cdot,\xi)]|E|.
	\end{equation*} 
	To extend this property to irrational matrices $\xi_0$, note that the sequence of maps 
	\begin{equation*}
		\xi\mapsto \int_E W(\w,\tfrac{x}{\e},\xi)\dx 
	\end{equation*}
	is still convex and by assumption it is bounded on rational matrices. Since we can write $\R^{m\times d}$ as the countable union of cubes with rational vertices, this implies that the sequence is locally equibounded and by \cite[Theorem 4.36]{FoLe} it is locally equi-Lipschitz. Hence it converges pointwise for all $\xi\in\R^{m\times d}$ and the limit is given by $\mathbb{E}[\overbar W(\cdot,\xi)]$ as this function is the continuous extension of the limit for rational matrices.
	\item we will also apply Lemma \ref{l.weakL1} to the variables $\sup_{|\eta|\leq r}W(\w,\tfrac{x}{\e},\eta)$ or $\sup_{|\zeta|\leq r}W^*(\w,\tfrac{x}{\e},\zeta)$. These terms only appear in bounds, so that we can tacitly restrict $r$ to positive, rational numbers. Moreover, we will use Lemma \ref{l.weakL1} also for the map $\Lambda(\w,\tfrac{x}{\e})$.
	\item we further exclude the null sets where Lemma \ref{l.defW_hom} fails for rational matrices $\xi\in\Q^{m\times d}$ or where Lemma \ref{l.existence_f_hom} for $W$ or the countably many approximations $W_k$ given by Lemma \ref{l.truncateW}.
	\item finally, we apply Lemma \ref{l.weakL1} to the maps $W(\w,\tfrac{x}{\e},\xi+\overbar h_{\xi}(\tau_{x/\e}\w))$ for rational $\xi\in\Q^{m\times d}$, where $\overbar h_{\xi}$ is given by Lemma \ref{l.defW_hom}.
\end{itemize}
If not stated explicitly otherwise, we shall always assume that we have an element $\w$ of the set of full measure such that the above properties hold.

\subsection{Preliminary results: compactness, correctors and the multi-cell formula}
We first show how the bound on the conjugate in \ref{(A3)} yields weak $L^1$-compactness for the gradients of functions with equibounded energy. 
\begin{lemma}\label{l.compactness}
Suppose that $W$ satisfies \ref{(A1)}, \ref{(A2)} and \ref{(A3)}. Let $(u_{\e})_{\e>0}\subset W^{1,1}(D)^m$ be such that
\begin{equation*}
	\sup_{\e\in (0,1)}F_{\e}(\w,u_{\e},D)<+\infty.
\end{equation*}
Then, as $\e\to 0$, the gradients $\nabla u_{\e}$ are relatively weakly compact in $L^{1}(D)^{m\times d}$. If moreover $u_{\e}$ is bounded in $L^1(D)^m$, then, up to subsequences, there exists $u\in W^{1,1}(D)^m$ such that $u_{\e}\rightharpoonup u$ weakly in $W^{1,1}(D)^m$. If $W$ satisfies in addition \ref{(A5)}, the above statement is also true with $L^{1}(D)^{m\times d}$, $L^1(D)^m$ and $W^{1,1}(D)^m$ replaced by $L^{p}(D)^{m\times d}$, $L^p(D)^m$ and $W^{1,p}(D)^m$, respectively.
\end{lemma}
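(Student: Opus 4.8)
The plan is to extract weak $L^1$-compactness of the gradients from the superlinearity encoded in the integrability of $\overbar W^*$ via the Dunford--Pettis theorem, and then to upgrade this to weak $W^{1,1}$-compactness by combining it with the $L^1$-bound on $(u_\e)$ and Poincar\'e/Rellich. First I would fix $\w$ in the full-measure set described at the start of Section~\ref{s.proofs}, so that the ergodic theorem (Lemma~\ref{l.weakL1}) applies to the stationary fields $x\mapsto \sup_{|\zeta|\le r}W^*(\w,\tfrac x\e,\zeta)=\sup_{|\zeta|\le r}\overbar W^*(\tau_{x/\e}\w,\zeta)$ for every rational $r>0$; by Lemma~\ref{l.weakL1} these converge weakly in $L^1(D)$ to the constant $\mathbb{E}[\sup_{|\zeta|\le r}\overbar W^*(\cdot,\zeta)]$, hence in particular their integrals over $D$ are bounded uniformly in $\e$ (after possibly enlarging the bound). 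Call this bound $c(r)$.

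The key step is the Fenchel--Young inequality applied pointwise: for any measurable $A\subset D$, any $r>0$ and a.e.\ $x$,
\begin{equation*}
	r\,|\nabla u_\e(x)|\,\chi_A(x)=\Big\langle r\tfrac{\nabla u_\e(x)}{|\nabla u_\e(x)|}\chi_A(x),\nabla u_\e(x)\Big\rangle\le W\big(\w,\tfrac x\e,\nabla u_\e(x)\big)+\sup_{|\zeta|\le r}W^*\big(\w,\tfrac x\e,\zeta\big)\chi_A(x),
\end{equation*}
(interpreting the left-hand side as $0$ where $\nabla u_\e(x)=0$). Integrating over $A$ and using $\sup_\e F_\e(\w,u_\e,D)=:M<+\infty$ together with the bound $c(r)$ above gives
\begin{equation*}
	r\int_A|\nabla u_\e|\dx\le M+\int_A\sup_{|\zeta|\le r}W^*\big(\w,\tfrac x\e,\zeta\big)\dx.
\end{equation*}
Choosing $A=D$ shows $\sup_\e\|\nabla u_\e\|_{L^1(D)}\le (M+c(r))/r<+\infty$, so $(\nabla u_\e)$ is $L^1$-bounded; and since $x\mapsto\sup_{|\zeta|\le r}W^*(\w,\tfrac x\e,\zeta)$ converges weakly in $L^1(D)$ it is equi-integrable, so given $\delta>0$ one first picks $r$ large enough that $M/r<\delta/2$ and then uses the equi-integrability (uniform in $\e$) to make $\int_A\sup_{|\zeta|\le r}W^*(\w,\tfrac x\e,\zeta)\dx<r\delta/2$ whenever $|A|$ is small; this yields $\sup_\e\int_A|\nabla u_\e|\dx<\delta$ for $|A|$ small, i.e.\ $(\nabla u_\e)$ is equi-integrable. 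By the Dunford--Pettis theorem $(\nabla u_\e)$ is relatively weakly compact in $L^1(D)^{m\times d}$. If in addition $(u_\e)$ is bounded in $L^1(D)^m$, then $(u_\e)$ is bounded in $W^{1,1}(D)^m$; extracting a subsequence with $\nabla u_\e\rightharpoonup V$ in $L^1$ and (by Rellich on the Lipschitz domain $D$) $u_\e\to u$ in $L^1(D)^m$, one identifies $V=\nabla u$ by testing against smooth compactly supported functions, so $u_\e\rightharpoonup u$ weakly in $W^{1,1}(D)^m$.

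For the statement under the additional assumption \ref{(A5)}, I would instead use $|\nabla u_\e(x)|^p\le W(\w,\tfrac x\e,\nabla u_\e(x))$ directly, so that $\sup_\e\|\nabla u_\e\|_{L^p(D)}\le M^{1/p}$; reflexivity of $L^p(D)^{m\times d}$ for $p>d-1\ge1$ gives relative weak compactness there, and boundedness of $(u_\e)$ in $L^p(D)^m$ combined with this bound gives boundedness in $W^{1,p}(D)^m$, whence weak $W^{1,p}$-compactness by reflexivity, with the limit gradient identified as before. As noted in comment~(i) after the statement of the main results, convexity of $W$ is not essential here: one may replace $W$ by $W^{**}$ throughout, since $W^{**}\le W$ makes the energy bound only weaker and $(W^{**})^*=W^*$, so the same Fenchel--Young argument applies. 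The main obstacle is really the bookkeeping in the two-parameter limit — one must be careful to choose $r$ (depending on $\delta$ but not on $\e$) before invoking the $\e$-uniform equi-integrability of the conjugate term coming from Lemma~\ref{l.weakL1}; once the order of quantifiers is fixed, the rest is routine.
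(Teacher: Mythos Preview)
Your argument is correct and follows essentially the same strategy as the paper: Fenchel--Young to extract a linear lower bound, equi-integrability of the conjugate term via the ergodic theorem (Lemma~\ref{l.weakL1}), and Dunford--Pettis to conclude. The one noteworthy difference is that you apply Fenchel--Young in the form $r|\xi|\le W(\w,x,\xi)+\sup_{|\zeta|\le r}W^*(\w,x,\zeta)$ directly (this is precisely \eqref{eq:quantitativelineargrowth}), whereas the paper pairs $\nabla u_\e$ with a test function $v$, writes $\langle\nabla u_\e,v\rangle\le W(\w,\tfrac{x}{\e},\tfrac{1}{r}\nabla u_\e)+W^*(\w,\tfrac{x}{\e},rv)$, and then uses convexity of $W$ to split the first term, picking up an extra $W(\w,\tfrac{x}{\e},0)$ contribution. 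Your route is slightly cleaner and, as you note, does not rely on convexity of $W$ at all; the paper's version needs the remark that one can pass to $W^{**}$ to drop convexity, whereas in your version this is automatic.
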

\begin{proof}
Let $A\subset D$ be a measurable set and $v\in L^{\infty}(D)^{m\times d}$ satisfy $\|v\|_{L^{\infty}(D)}\leq 1$. For any $r\geq 1$ the Fenchel-Young inequality and the convexity of $W$ in the last variable yield that
\begin{align*}
	\int_A \langle \nabla u_{\e},v\rangle\dx&\leq\int_A W(\w,\tfrac{x}{\e},\tfrac{1}{r}\nabla u_{\e})\dx+\int_A W^*(\w,\tfrac{x}{\e},rv)\dx
	\\
	&\leq \frac{1}{r}\int_A W(\w,\tfrac{x}{\e},\nabla u_{\e})\dx+\frac{r-1}{r}\int_A W(\w,\tfrac{x}{\e},0)\dx+\int_A\sup_{|\eta|\leq r}W^*(\w,\tfrac{x}{\e},\eta)\dx.
\end{align*}
Taking the supremum over all such $v$'s, the nonnegativity of $W$ and the global energy bound imply that
\begin{equation}\label{eq:A_bound}
	\int_A|\nabla u_{\e}|\dx\leq \frac{C}{r}+\int_A W(\w,\tfrac{x}{\e},0)\dx+\int_A\sup_{|\eta|\leq r}W^*(\w,\tfrac{x}{\e},\eta)\dx.
\end{equation}
Note that $W^*$ inherits the stationary of $W$ and so does the function $\sup_{|\eta|\leq r}W^*(\cdot,\cdot,\eta)$. Combining \ref{(A3)} and Lemma \ref{l.weakL1}, the functions in the last two integrals in \eqref{eq:A_bound} are equiintegrable. Hence we deduce that
\begin{equation*}
\lim_{|A|\to 0} \sup_{|\e|\ll 1}\int_{A}|\nabla u_{\e}|\dx\leq \frac{C}{r}.
\end{equation*}
Letting $r\to +\infty$, it follows that also $\nabla u_{\e}$ is equiintegrable as $\e\to 0$. Since $D$ has finite measure, it remains to show that $\nabla u_{\e}$ is bounded in $L^1(D)^{m\times d}$. To see this, choose $A=D$ and $r=1$ in \eqref{eq:A_bound} and use again Lemma \ref{l.weakL1} to conclude that the functions in the last two integrals in \eqref{eq:A_bound} are also equi-bounded in $L^1(D)$. The last assertion is a standard result for bounded sequences in $W^{1,1}$ once the gradients are weakly compact.

The claim for $W$ satisfying \ref{(A5)} is simpler and follows from 
$$
\sup_{\e\in (0,1)}\int_D|\nabla u_\e|^p\dx\leq \sup_{\e\in(0,1)}F_{\e}(\w,u_{\e},D)<+\infty
$$
and standard results for Sobolev spaces with exponent $p>1$.
\end{proof}

Next, we adapt the construction of correctors in \cite{DG_unbounded,JKO} to the superlinear setting without any polynomial growth of order $p>1$ from below. Define the set 
\begin{align}\label{eq:def_F_pot}
	F_{\rm pot}^1:=\{\overbar h\in L^1(\Omega)^d:\,&\mathbb{E}[\overbar h]=0\text{ and for a.e. }\w\in\Omega \text{ the function }h(x):= \overbar h(\tau_x\w)\in L^1_{\rm loc}(\R^d)^d\text{ satisfies }\nonumber
	\\
	&\;\partial_i h_j-\partial_j h_i=0 \text{ on }\R^d
	\text{ for all $1\leq i,j\leq d$ in the sense of distributions}\}. 
\end{align}
Even though $d\neq 3$ in general, we refer to the property $\partial_i h_j-\partial_j h_i=0$ as being curl-free. The following result is \cite[Lemma 4.11]{RR21}.
\begin{lemma}\label{l.F_pot}
	The space $F_{\rm pot}$ is a closed subspace of $L^1(\Omega)^d$. Moreover, given $\overbar h\in F_{\rm pot}^1$, there exists a map $\varphi:\Omega\to W^{1,1}_{\rm loc}(\R^d)$ such that $\nabla\varphi(\w,x)=\overbar h(\tau_x\w)$ almost surely as maps in $L^1_{\rm loc}(\R^d)^d$ and such that for every bounded set $B\subset\R^d$ the maps $\w\mapsto \varphi(\w,\cdot)$ and $\w\mapsto \nabla\varphi(\w,\cdot)$ are measurable from $\Omega$ to $L^1(B)$ and to $L^1(B)^d$, respectively.
\end{lemma}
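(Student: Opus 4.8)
The plan is to establish the two assertions separately: that $F_{\rm pot}^1$ is a closed linear subspace of $L^1(\Omega)^d$, and that any $\overbar h\in F_{\rm pot}^1$ admits a measurable potential. For the first part, linearity is immediate since both the constraint $\mathbb{E}[\overbar h]=0$ and the distributional curl-free condition $\partial_i h_j-\partial_j h_i=0$ are linear. To prove closedness, I would take $\overbar h_n\to\overbar h$ in $L^1(\Omega)^d$ with $\overbar h_n\in F_{\rm pot}^1$; the mean condition passes to the limit by continuity of the expectation, and for the curl-free property I would transfer the convergence to the physical space via stationarity. Indeed, writing $h_n(\w,x)=\overbar h_n(\tau_x\w)$ and $h(\w,x)=\overbar h(\tau_x\w)$, Fubini together with the invariance of $\mathbb{P}$ under $\tau_x$ gives, for every $R>0$,
\[
\mathbb{E}\Big[\int_{B_R}\big|h_n(\w,x)-h(\w,x)\big|\,\mathrm{d}x\Big]=|B_R|\,\|\overbar h_n-\overbar h\|_{L^1(\Omega)^d},
\]
which tends to $0$ as $n\to\infty$. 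Passing to a subsequence and diagonalising over $R\in\N$, we get $h_n(\w,\cdot)\to h(\w,\cdot)$ in $L^1_{\rm loc}(\R^d)^d$ for a.e.\ $\w$; since distributional differentiation is continuous with respect to $L^1_{\rm loc}$-convergence, the limit field is curl-free for a.e.\ $\w$, and hence $\overbar h\in F_{\rm pot}^1$.

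For the potential, I would first fix a jointly measurable representative of $(\w,x)\mapsto h(\w,x):=\overbar h(\tau_x\w)$, which is $\mathcal{F}\otimes\mathcal{L}^d$-measurable because $(\w,x)\mapsto\tau_x\w$ is jointly measurable, and observe that by Fubini $\w\mapsto h(\w,\cdot)|_{B_R}\in L^1(B_R)^d$ is measurable for each $R\in\N$. For a.e.\ fixed $\w$ the field $h(\w,\cdot)\in L^1_{\rm loc}(\R^d)^d$ is curl-free, hence a gradient by the Poincar\'e lemma: mollifying, $h^\delta:=h(\w,\cdot)\ast\rho_\delta$ is smooth and curl-free, so $\varphi^\delta(x):=\int_0^1 h^\delta(tx)\cdot x\,\mathrm{d}t$ satisfies $\nabla\varphi^\delta=h^\delta$, and after normalising $\varphi^\delta$ to have zero mean over $B_1$ the Poincar\'e--Wirtinger inequality on balls shows that $\varphi^\delta$ is Cauchy in $W^{1,1}_{\rm loc}(\R^d)$ as $\delta\to0$. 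Its limit $\varphi(\w,\cdot)\in W^{1,1}_{\rm loc}(\R^d)$ satisfies $\nabla\varphi(\w,\cdot)=h(\w,\cdot)$ in $L^1_{\rm loc}(\R^d)^d$ and $\int_{B_1}\varphi(\w,x)\,\mathrm{d}x=0$, which makes it unique.

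Finally, for measurability of $\w\mapsto\varphi(\w,\cdot)$ the key observation is that, with this normalisation, the reconstruction $h\mapsto\varphi$ is a bounded linear operator $T_R$ from the closed subspace of curl-free fields in $L^1(B_R)^d$ into $L^1(B_R)$: boundedness follows from the Poincar\'e--Wirtinger inequality on $B_R$ combined with a comparison of the mean of $\varphi$ over $B_1$ with its mean over $B_R$, yielding $\|\varphi\|_{L^1(B_R)}\le C_R\|h\|_{L^1(B_R)}$, and the operators $T_R$ are mutually consistent (two potentials with the same gradient on $B_R$ and the same mean over $B_1$ agree on $B_R$). Each $T_R$ being continuous, hence Borel, the composition $\w\mapsto T_R\big(h(\w,\cdot)|_{B_R}\big)=\varphi(\w,\cdot)|_{B_R}$ is measurable into $L^1(B_R)$; letting $R$ run over $\N$ gives measurability of $\w\mapsto\varphi(\w,\cdot)$ into $L^1(B)$ for every bounded $B$, while measurability of $\w\mapsto\nabla\varphi(\w,\cdot)=h(\w,\cdot)$ into $L^1(B)^d$ is already established. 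I expect the main obstacle to be precisely this continuity/boundedness step — quantifying the Poincar\'e lemma uniformly enough on the subspace of curl-free $L^1$-fields to conclude continuity of $T_R$, i.e.\ a measurable selection of the additive constant — since the remaining ingredients are either linear bookkeeping or the classical mollification argument.
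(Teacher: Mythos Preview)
The paper does not give its own proof of this lemma: it simply cites \cite[Lemma 4.1]{RR21}. Your argument is a correct, self-contained proof along entirely standard lines, and is essentially what one would expect the cited reference to contain.

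Your closing worry about the continuity of the reconstruction operator $T_R$ is in fact resolved by the very argument you sketch. With the normalisation $\dashint_{B_1}\varphi=0$ and $R\geq 1$, one has
\[
\Big|\dashint_{B_R}\varphi\Big|
=\Big|\dashint_{B_1}\big(\varphi-\dashint_{B_R}\varphi\big)\Big|
\leq \frac{1}{|B_1|}\Big\|\varphi-\dashint_{B_R}\varphi\Big\|_{L^1(B_R)}
\leq \frac{C_R}{|B_1|}\|h\|_{L^1(B_R)},
\]
so together with the Poincar\'e--Wirtinger inequality on $B_R$ this gives $\|\varphi\|_{L^1(B_R)}\leq C_R'\|h\|_{L^1(B_R)}$, i.e.\ $T_R$ is bounded and hence Borel. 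The composition $\w\mapsto T_R(h(\w,\cdot)|_{B_R})$ is then measurable as claimed, and restriction to an arbitrary bounded $B\subset B_R$ handles general bounded sets. No additional measurable-selection device is needed. The remaining steps (closedness via transferring $L^1(\Omega)$-convergence to $L^1_{\rm loc}$-convergence on the physical space by stationarity and Fubini; existence of the potential via mollification and the line-integral formula on the star-shaped domain $\R^d$; Cauchy property from Poincar\'e--Wirtinger) are all correct.
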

The above result allows us to introduce a corrector by solving a minimization problem on the probability space as explained in the lemma below.
\begin{lemma}\label{l.defW_hom}
Suppose $W$ satisfies \ref{(A1)}, \ref{(A2)} and \ref{(A3)}. 
Let $\xi\in\R^{m\times d}$. Then there exists a function $\overbar h_{\xi}\in (F^1_{\rm pot})^m$ such that
\begin{equation*}
W_{\rm hom}(\xi):=\mathbb{E}[\overbar W(\cdot,\xi+\overbar h_{\xi})]=\inf_{\overbar h\in (F^1_{\rm pot})^m}\mathbb{E}[\overbar W(\cdot,\xi+\overbar h)].
\end{equation*}
We call $\phi_{\xi}:\Omega\to W^{1,1}_{\rm loc}(\R^d)^m$ given by Lemma \ref{l.F_pot} applied to the components of $\overbar h_{\xi}$ the corrector associated to the direction $\xi$. We assume in addition that $\dashint_{B_1}\phi_{\xi}(\w,x)\dx=0$. Then, as $\e\to 0$, almost surely for any bounded, open set $A\subset\R^d$ it holds that
\begin{align*}
	\e\phi_{\xi}(\w,\cdot/\e)\rightharpoonup 0\quad  \text{ in }W^{1,1}(A)^m.
\end{align*}
The function $\xi\mapsto W_{\rm hom}(\xi)$ is convex, finite and superlinear at infinity. Moreover, the following is true:
\begin{enumerate}
\item[(i)] Suppose that $W$ satisfies in addition \ref{(A4)}. Then, there exists $C_0<+\infty$ such that for all $\xi\in\R^{m\times d}$ and all $\widetilde{\xi}\in\R^{m\times d}$ with $e_j^T(\xi-\widetilde{\xi})\in \{0,e_j^T\xi\}$ for all $1\leq j\leq m$ it holds that
\begin{equation*}
	W_{\rm hom}(\widetilde{\xi})\leq C_0(W_{\rm hom}(\xi)+1).
\end{equation*}
\item[(ii)] Suppose that $W$ satisfies in addition \ref{(A5)}. Then for all $\xi\in\R^{m\times d}$
\begin{equation*}
	W_{\rm hom}(\xi)\geq |\xi|^p,
\end{equation*}
and almost surely, as $\e\to 0$, it holds that $\e\phi_{\xi}(\w,\cdot/\e)\rightharpoonup 0$ in $W^{1,p}(A)^m$, where $p>d-1$ is the exponent in \ref{(A5)}.
\end{enumerate} 
\end{lemma}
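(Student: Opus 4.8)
\textbf{Proof strategy for Lemma~\ref{l.defW_hom}.}

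The plan is to set up the minimization problem $\inf_{\overbar h\in(F^1_{\rm pot})^m}\mathbb{E}[\overbar W(\cdot,\xi+\overbar h)]$ via the direct method. First I would observe that the infimum is finite: the constant choice $\overbar h=0$ gives the upper bound $\mathbb{E}[\overbar W(\cdot,\xi)]<+\infty$ by \ref{(A3)} (together with Remark~\ref{r.comments}~(i) which allows replacing the local suprema by pointwise integrability). For the lower bound and compactness of a minimizing sequence, the key tool is the quantitative linear growth \eqref{eq:ell_quant_linear} from Remark~\ref{r.comments}~(ii): writing $\ell(\w,|\xi+\overbar h|)\leq \overbar W(\w,\xi+\overbar h)$ with $\ell(\w,t)\geq t-\overbar\Lambda(\w)$, any minimizing sequence $\overbar h_n$ satisfies $\mathbb{E}[|\xi+\overbar h_n|]\leq \mathbb{E}[\overbar W(\cdot,\xi+\overbar h_n)]+\mathbb{E}[\overbar\Lambda]\leq C$, so $\overbar h_n$ is bounded in $L^1(\Omega)^{m\times d}$. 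To upgrade boundedness to weak $L^1$-compactness, I would use the superlinearity of $\ell$ (equivalently of $\overbar W$) exactly as in the proof of Lemma~\ref{l.compactness}: superlinear growth of the integrand forces equi-integrability of $|\overbar h_n|$ via the de~la~Vallée-Poussin criterion, hence by the Dunford--Pettis theorem $\overbar h_n\rightharpoonup \overbar h_\xi$ weakly in $L^1(\Omega)^{m\times d}$ along a subsequence. Since $F^1_{\rm pot}$ is a closed subspace of $L^1(\Omega)^d$ by Lemma~\ref{l.F_pot} (hence weakly closed, being convex), the limit $\overbar h_\xi$ lies in $(F^1_{\rm pot})^m$. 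Lower semicontinuity of $\overbar h\mapsto \mathbb{E}[\overbar W(\cdot,\xi+\overbar h)]$ under weak $L^1$-convergence follows from convexity of $\overbar W$ in the last variable (\ref{(A2)}) and Fatou/Ioffe-type lower semicontinuity for convex integral functionals, which identifies $\overbar h_\xi$ as a minimizer. Normalizing the corrector by subtracting its average over $B_1$ uses Lemma~\ref{l.F_pot} and does not affect the gradient.

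Next I would establish the asymptotic vanishing $\e\phi_\xi(\w,\cdot/\e)\rightharpoonup 0$ in $W^{1,1}(A)^m$. Since $\nabla\phi_\xi(\w,x)=\overbar h_\xi(\tau_x\w)$ and $\mathbb{E}[\overbar h_\xi]=0$, Lemma~\ref{l.weakL1} (applied componentwise to $\overbar h_\xi\in L^1(\Omega)^{m\times d}$, on the bounded set $A$) gives $\nabla\phi_\xi(\w,\cdot/\e)=\overbar h_\xi(\tau_{\cdot/\e}\w)\rightharpoonup \mathbb{E}[\overbar h_\xi]=0$ weakly in $L^1(A)^{m\times d}$; since $\e\nabla(\phi_\xi(\w,\cdot/\e))=(\nabla\phi_\xi)(\w,\cdot/\e)$, the gradients of $\e\phi_\xi(\w,\cdot/\e)$ converge weakly to $0$ in $L^1(A)^{m\times d}$. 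For the functions themselves, the Poincaré--Wirtinger inequality on a ball together with the zero-average normalization on $B_1$ (plus a scaling/covering argument to pass from $B_1$ to general bounded $A$, as is standard for sublinear growth of correctors) gives that $\e\phi_\xi(\w,\cdot/\e)$ is bounded in $W^{1,1}(A)^m$ and converges to a constant; the normalization forces that constant to be $0$. Under \ref{(A5)}, the improved integrability $\overbar h_\xi\in L^p(\Omega)^{m\times d}$ holds because $\mathbb{E}[|\xi+\overbar h_\xi|^p]\leq \mathbb{E}[\overbar W(\cdot,\xi+\overbar h_\xi)]=W_{\rm hom}(\xi)<+\infty$; one then needs the $L^p$-analogue of Lemma~\ref{l.weakL1}, which holds by the same argument since $\overbar h_\xi\in L^p(\Omega)^{m\times d}\subset L^1(\Omega)^{m\times d}$ and $p$-th powers are equi-integrable, upgrading the convergence to weak convergence in $W^{1,p}(A)^m$ and giving $W_{\rm hom}(\xi)\geq\mathbb{E}[|\xi+\overbar h_\xi|^p]\geq|\xi+\mathbb{E}[\overbar h_\xi]|^p=|\xi|^p$ by Jensen's inequality.

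For part~(i) under \ref{(A4)}: given $\xi$ and $\widetilde\xi$ with $e_j^T(\xi-\widetilde\xi)\in\{0,e_j^T\xi\}$, let $\overbar h_\xi$ be the corrector for $\xi$ and build a competitor for $\widetilde\xi$ by keeping the rows of $\xi+\overbar h_\xi$ that survive in $\widetilde\xi$ and zeroing out the others; concretely, if $P$ denotes the coordinate projection killing the relevant rows, then $P\overbar h_\xi\in(F^1_{\rm pot})^m$ (the curl-free and mean-zero conditions are preserved under deleting rows, since each row is treated independently in \eqref{eq:def_F_pot}), and $\xi+\widetilde P\ldots$ — more precisely $\widetilde\xi+P\overbar h_\xi$ is of the form ``$\xi+\overbar h_\xi$ with some rows set to zero'', so \ref{(A4)} applied pointwise gives $\overbar W(\w,\widetilde\xi+P\overbar h_\xi)\leq C\overbar W(\w,\xi+\overbar h_\xi)+\overbar\Lambda(\w)$. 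Taking expectations and using minimality of $\overbar h_{\widetilde\xi}$ yields $W_{\rm hom}(\widetilde\xi)\leq \mathbb{E}[\overbar W(\cdot,\widetilde\xi+P\overbar h_\xi)]\leq C\,W_{\rm hom}(\xi)+\mathbb{E}[\overbar\Lambda]$, which gives the claim with $C_0=\max\{C,\mathbb{E}[\overbar\Lambda]\}$. Finally, superlinearity of $W_{\rm hom}$ itself: from $W_{\rm hom}(\xi)=\mathbb{E}[\overbar W(\cdot,\xi+\overbar h_\xi)]\geq \mathbb{E}[\ell(\cdot,|\xi+\overbar h_\xi|)]\geq \ell_{\rm det}(|\mathbb{E}[\xi+\overbar h_\xi]|)=\ell_{\rm det}(|\xi|)$ by Jensen, where $\ell_{\rm det}(t):=\mathbb{E}[\ell(\cdot,t)]$ is convex, monotone and superlinear (superlinearity of the expectation follows from the pointwise superlinearity and monotone convergence, using $\ell(\w,t)/t\to+\infty$ and $\ell\ge 0$ up to the harmless affine shift $\overbar\Lambda$).

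\textbf{Main obstacle.} The delicate point is the lower semicontinuity of the functional $\overbar h\mapsto\mathbb{E}[\overbar W(\cdot,\xi+\overbar h)]$ with respect to \emph{weak} $L^1(\Omega)$-convergence under the very weak growth hypotheses available (only \ref{(A3)}, no $p$-coercivity in general): this is precisely where one must invoke a lower semicontinuity theorem for convex integral functionals on $L^1$ of a measure space (Ioffe's theorem, or the sequential weak lower semicontinuity that follows from convexity plus Fatou after a Mazur-lemma convexification argument). One must also be careful that the weak $L^1$-limit of curl-free stationary fields remains curl-free; this is handled by the fact that $F^1_{\rm pot}$ is a \emph{closed} subspace of $L^1(\Omega)^d$ (Lemma~\ref{l.F_pot}), hence weakly closed by convexity, so no separate argument on the distributional curl is needed. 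The equi-integrability step that produces compactness of minimizing sequences is the same mechanism as in Lemma~\ref{l.compactness} and is genuinely where the Fenchel-conjugate integrability in \ref{(A3)} is used.
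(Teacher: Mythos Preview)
Your approach matches the paper's almost exactly: direct method on $(F^1_{\rm pot})^m$ with compactness via the Fenchel-conjugate bound as in Lemma~\ref{l.compactness}, weak closedness of $F^1_{\rm pot}$ from Lemma~\ref{l.F_pot}, ergodic theorem for the gradients, Poincar\'e plus a sublinearity-of-the-average argument for the corrector, and the row-projection competitor for part~(i). The paper carries out the sublinearity step by the explicit identity $\dashint_{B_1}\tfrac{1}{r}\phi(\w,ry)\,dy=\tfrac{1}{r}\int_1^r\dashint_{B_1}\nabla\phi(\w,ty)y\,dy\,dt$ and the weak $L^1$-convergence of the gradients, which makes precise your ``normalization forces the constant to be $0$'' step.

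There is one genuine slip in your superlinearity argument. You write $\mathbb{E}[\ell(\cdot,|\xi+\overbar h_\xi|)]\geq \ell_{\rm det}(|\mathbb{E}[\xi+\overbar h_\xi]|)$ ``by Jensen'', but Jensen does not apply here: the integrand $\ell(\w,|z|)$ depends on $\w$ as well as on $z$, and the random variable $\xi+\overbar h_\xi(\w)$ is coupled to $\w$, so you cannot freeze one while averaging the other. The subgradient inequality $\ell(\w,|z|)\geq \ell(\w,|\xi|)+p(\w)\cdot(z-\xi)$ leaves, after taking expectations, the cross term $\mathbb{E}[p(\cdot)\cdot\overbar h_\xi]$, which need not vanish. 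The paper avoids this by using the deterministic linear lower bound \eqref{eq:quantitativelineargrowth} directly: for any $C>0$, $\overbar W(\w,z)\geq C|z|-\sup_{|\eta|\leq C}\overbar W^*(\w,\eta)$, so $W_{\rm hom}(\xi)\geq C\,\mathbb{E}[|\xi+\overbar h_\xi|]-\mathbb{E}[\sup_{|\eta|\leq C}\overbar W^*(\cdot,\eta)]\geq C|\xi|-\mathbb{E}[\sup_{|\eta|\leq C}\overbar W^*(\cdot,\eta)]$, where now Jensen is applied only to the deterministic convex function $|\cdot|$. This gives superlinearity since the last expectation is finite for each $C$ by \ref{(A3)}.
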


\begin{proof}
The existence of minimizers in the weakly closed set $(F^1_{\rm pot})^m$ (cf. Lemma \ref{l.F_pot}) for the functional $\overbar h\mapsto \mathbb{E}[\overbar W(\cdot,\xi+\overbar h)]$ follows from the direct method of the calculus of variations. Indeed, the convexity of $W$ in the last variable turns the functional weakly lower semicontinuous for the $L^1(\Omega)$-topology, while the relative weak compactness of minimizing sequences can be shown using \ref{(A3)} in the form of $\w\mapsto\sup_{|\eta|\leq r}\overbar W^*(\w,\eta)\in L^1(\Omega)$ for all $r>0$ as in the proof of Lemma \ref{l.compactness}, replacing the oscillating term $\tfrac{x}{\e}$ by $0$ and the physical space by the probability space. Next, note that the constraint $\dashint_{B_1}\phi_{\xi}(\w,x)\dx=0$ does not affect the measurability property stated in Lemma \ref{l.F_pot} as this integral term is a measurable function of $\w$. We continue by showing the weak convergence to zero, dropping the dependence on $\xi$ for the moment. For a.e. $\w\in\Omega$ we have $\nabla \phi(\w,\cdot/\e)=\overbar h(\tau_{\cdot/\e}\w)$ and $\overbar h\in L^1(\Omega)$. Hence the ergodic theorem in the form of Lemma \ref{l.weakL1} implies that for any bounded set $B\subset\R^d$ we have
\begin{equation}\label{eq:weakconvergencegradients}
\nabla \phi(\w,\cdot/\e)\rightharpoonup \mathbb{E}[\overbar h]=0 \quad\text{ in }L^1(B)^{m\times d},
\end{equation}
where we used that $\overbar h\in (F_{\rm pot}^1)^m$ for the last equality. We will show that
\begin{equation}\label{eq:sublinear}
\lim_{\e\to 0}\dashint_{B_1}\e\phi(\w,x/\e)\dx=0,
\end{equation}
which yields the claim by Poincar\'e's inequality considering a ball $B$ such that $B_1\cup A\subset B$. By a density argument one can show that for $r\geq 1$ 
\begin{equation*}
\dashint_{B_1}\frac{1}{r}\phi(\w,ry)\dy=\dashint_{B_1}\frac{1}{r}(\phi(\w,ry)-\phi(\w,y))\dy=\frac{1}{r}\dashint_{B_1}\int_{1}^{r}\nabla\phi(\w,ty)y\,\mathrm{d}t\dy=\frac{1}{r}\int_{1}^{r}\dashint_{B_1}\nabla\phi(\w,ty)y\dy\,\mathrm{d}t.
\end{equation*}
By approximation with continuous functions one can show that the map $t\mapsto\dashint_{B_1}\nabla\phi(\w,ty)y\dy$ is continuous on $(0,+\infty)$  and by \eqref{eq:weakconvergencegradients} it vanishes at infinity. Hence the right-hand side term in the above equality vanishes as $r\to +\infty$. This yields \eqref{eq:sublinear}. 

The function $W_{\rm hom}$ is convex by the convexity of $\overbar W$ in the last variable. It is finite since $0\in (F_{\rm pot}^1)^m$ is admissible in the minimization problem defining $W_{\rm hom}$, so that due to the integrability condition \ref{(A3)} we have
\begin{equation*}
	W_{\rm hom}(\xi)\leq \mathbb{E}[\overbar W(\cdot,\xi)]<+\infty.
\end{equation*}
The superlinearity follows from the lower bound in \eqref{eq:quantitativelineargrowth}. Indeed, for any $C>0$ we have
\begin{equation*}
	C|\xi|=C|\mathbb{E}[\xi+\overbar h_{\xi}]|\leq \mathbb{E}[C|\xi+\overbar h_{\xi}|]\leq \mathbb{E}[\overbar W(\cdot,\xi+\overbar h_{\xi})+\sup_{|\eta|\leq C}\overbar W^*(\cdot,\eta)]=W_{\rm hom}(\xi)+\mathbb{E}[\sup_{|\eta|\leq C}\overbar W^*(\cdot,\eta)].
\end{equation*}
By Assumption \ref{(A3)}, the last expectation is finite for any fixed $C>0$, so that dividing the above inequality by $|\xi|$ and letting $|\xi|\to +\infty$, we infer that
\begin{equation*}
	C\leq \liminf_{|\xi|\to +\infty}\frac{W_{\rm hom}(\xi)}{|\xi|}.
\end{equation*}
Superlinearity at infinity follows from the arbitrariness of $C>0$.

In order to prove the assertion in (i), fix $\xi,\widetilde{\xi}\in\R^{m\times d}$ as in the statement and let $\overbar h_{\xi}\in (F_{\rm pot}^1)^m$ be such that $W_{\rm hom}(\xi)=\mathbb{E}[W(\cdot,0,\xi+\overbar h_{\xi})]$. We define $\widetilde h_{\xi}\in(F_{\rm pot}^1)^m$ via $\widetilde h_\xi^j=0$ if $e_j^T\widetilde \xi=0$ and $\widetilde h_\xi^j=\overbar h_\xi^j$ if $e_j^T\widetilde \xi=e_j^T \xi$. Then, applying \ref{(A4)} pointwise to the two matrix-valued functions $\xi+\overbar h_{\xi}$ and $\widetilde{\xi}+\widetilde h_{\xi}$, we deduce that
\begin{equation*}
W_{\rm hom}(\widetilde{\xi})\leq \mathbb{E}[W(\cdot,0,\widetilde{\xi}+\widetilde h_{\xi})]\leq C\mathbb{E}[W(\cdot,0,\xi+\overbar h_{\xi})]+\mathbb{E}[\overbar\Lambda]=C_0(W_{\rm hom}(\xi)+1),
\end{equation*}
with $C_0=\max\{C,\mathbb{E}[\overbar\Lambda]\}<+\infty$.

The additional statements in (ii) are well-known: the $p$-growth from below implies that $\overbar h_\xi$ satisfies in addition $\overbar h_\xi\in (L^p(\Omega;\R^d))^m$ and from this we deduce the weak convergence in $W^{1,p}(A,\R^m)$ for $\e \phi_\xi(\omega,\cdot/\e)$. The coercivity for $W_{\rm hom}$ follows by $\mathbb E[\overbar h_\xi]=0$, \ref{(A5)} and Jensen's inequality as 
$$
|\xi|^p=|\mathbb E[\xi+\overbar h_\xi]|^p\leq \mathbb E[|\xi+\overbar h_\xi|^p]\leq W_{\rm hom}(\xi).
$$
\end{proof}

As a final result of this section, we show the almost sure existence of the limit in an asymptotic minimization formula in the physical space described in the following lemma. 

\begin{lemma}\label{l.existence_f_hom}
Suppose that $W$ satisfies \ref{(A1)}, \ref{(A2)} and \ref{(A3)}.
%
For a bounded open set $O\subset\R^d$ and $\xi\in\R^{m\times d}$, we define
\begin{equation*}
	\mu_{\xi}(\w,O)=\inf\left\{F_1(\w,u,O):\,u-\xi x\in W^{1,1}_{0}(O,\R^m)\right\}.
\end{equation*}
There exists a convex function $\mu_{\rm hom}:\R^{m\times d}\to[0,+\infty)$ and a set $\Omega'\subset\Omega$ with $\mathbb P[\Omega']=1$ such that the following is true: for every $\omega\in\Omega'$ and every cube $Q=x+(-\eta,\eta)^d\subset\R^d$ and $\xi\in\R^{m\times d}$ it holds that	\begin{equation*}
		\mu_{\rm hom}(\xi)=\lim_{t\to +\infty}\frac{1}{|tQ|}\mu_{\xi}(\w,tQ).
	\end{equation*}
\end{lemma}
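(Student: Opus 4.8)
The plan is to invoke a subadditive ergodic theorem, in the spirit of the classical Dal Maso--Modica approach and its more recent incarnations. First I would check that for fixed $\w$ the set function $A\mapsto \mu_\xi(\w,A)$ (defined on bounded open sets) is \emph{subadditive}: given disjoint open sets $A_1,\dots,A_k\subset A$ with $|A\setminus\bigcup_i A_i|=0$, one glues together near-optimal test functions $u_i$ for $\mu_\xi(\w,A_i)$ with the affine function $\xi x$ on the leftover set; since each $u_i-\xi x\in W^{1,1}_0(A_i)^m$, the glued competitor lies in $\xi x + W^{1,1}_0(A)^m$, and by \ref{(A3)} (local boundedness, i.e. $\mathbb E[\overbar W(\cdot,\xi)]<\infty$, transferred to the physical space via the ergodic theorem Lemma \ref{l.weakL1}) the energy contributed by the null set is zero. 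One also needs the covariance/stationarity: $\mu_\xi(\w,A+z)=\mu_\xi(\tau_z\w,A)$, which is immediate from \ref{(A1)} by the change of variables $x\mapsto x+z$. Finally one needs an a priori bound $0\le \mu_\xi(\w,A)\le \mathbb E'[\,\cdot\,]|A|$-type control to guarantee integrability of the relevant process; the lower bound is trivial since $W\ge 0$, and the upper bound follows by taking $u=\xi x$ as a competitor, giving $\mu_\xi(\w,A)\le \int_A W(\w,x,\xi)\dx$, whose expectation is finite by \ref{(A3)}.

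With these three ingredients (subadditivity, stationarity, integrability) in place, I would apply the multiparameter subadditive ergodic theorem of Akcoglu--Krengel (in the form used in \cite{DMMoII} or \cite{MeMi}) to conclude that for each fixed $\xi$ there is a full-measure set and a limit
\[
\lim_{t\to+\infty}\frac{\mu_\xi(\w,tQ)}{|tQ|}=:\mu_{\rm hom}(\xi)
\]
existing and being independent of $\w$ (by ergodicity) and independent of the cube $Q$ (a standard consequence, since the Akcoglu--Krengel theorem yields the limit along any Følner-type sequence of cubes, and one can also pass from cubes centered at the origin to arbitrary cubes by a covering/sandwiching argument using subadditivity and the negligible-boundary estimate). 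To get a \emph{single} null set working for \emph{all} $\xi\in\R^{m\times d}$ simultaneously, I would first establish the limit for all rational $\xi\in\Q^{m\times d}$ (a countable intersection of full-measure sets) and then extend to arbitrary $\xi$ by a continuity/equi-Lipschitz argument: the maps $\xi\mapsto \frac1{|tQ|}\mu_\xi(\w,tQ)$ are convex (infimum over an affine family of convex functionals) and locally equibounded by $\xi\mapsto \dashint_Q W(\w,tx,\xi)\dx \to \mathbb E[\overbar W(\cdot,\xi)]$, hence locally equi-Lipschitz by \cite[Theorem 4.36]{FoLe}; thus pointwise convergence on a dense set upgrades to locally uniform convergence, and the limit $\mu_{\rm hom}$ inherits convexity and finiteness. (This is exactly the device already used in the discussion of null sets at the beginning of Section \ref{s.proofs}.)

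The main obstacle I anticipate is \emph{not} the abstract ergodic machinery but rather making the subadditivity/gluing step rigorous under the very weak growth hypotheses: with no coercivity better than \ref{(A3)} available at this stage, one cannot control test functions in any $W^{1,p}$ with $p>1$, and the "leftover" region in the gluing must genuinely have zero energy contribution, which is why the limit is taken over cubes with $|\partial(tQ)|=0$ and why the competitor is patched with the affine function (not with an arbitrary extension). A second, more technical point is the measurability of $\w\mapsto\mu_\xi(\w,O)$ — needed to invoke the ergodic theorem — which does not follow from elementary considerations here; I would appeal to the general measurability result for optimal values of Dirichlet problems with jointly measurable integrands that the authors announce for the appendix, applied to the jointly measurable integrand $W$ from \ref{(A1)}. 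Once measurability and subadditivity are secured, the remainder is a routine invocation of Akcoglu--Krengel plus the convexity-based extension in $\xi$.
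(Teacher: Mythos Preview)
Your proposal is correct and follows the paper's overall strategy: subadditivity, stationarity, and the $L^1$-bound obtained by testing the affine function $\xi x$, then Akcoglu--Krengel, then extension to arbitrary cubes and real $t\to\infty$, with measurability supplied by the appendix result (Lemma~\ref{l.measurable}). Where you genuinely diverge from the paper is in the step that makes the exceptional null set uniform in $\xi$. You argue abstractly: the maps $\xi\mapsto |tQ|^{-1}\mu_\xi(\w,tQ)$ are convex (as infima over one variable of a jointly convex function), non-negative, and bounded above by $\xi\mapsto\dashint_{tQ}W(\w,x,\xi)\dx$, which is itself convex and, on the good $\w$-set, locally equibounded in $t$ by the ergodic theorem applied at rational $\xi$; hence the family is locally equi-Lipschitz in $\xi$ uniformly in $t$, and convergence at rational $\xi$ upgrades to convergence everywhere. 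The paper instead does this by hand: for a rational $\xi_0$ close to $\xi$ it builds, via a cut-off between the nested cubes $tQ$ and $tQ(1\pm\delta)$, explicit competitors that compare $\mu_{\xi}(\w,tQ)$ with $\mu_{\xi_0}(\w,tQ(1\pm\delta))$ up to boundary-layer errors controlled by $\mathbb{E}[\sup_{|\eta|\le C}\overbar W(\cdot,\eta)]$, and then sends $\delta\to 0$. Your route is shorter and reuses exactly the convexity/equi-Lipschitz device the paper already records at the start of Section~\ref{s.proofs}; the paper's route is more constructive and makes the error terms completely explicit, which can be convenient if one later wants quantitative control. One minor remark: your worry about the gluing step under weak growth is somewhat misplaced --- since the leftover set has Lebesgue measure zero, the affine patch contributes nothing, and the genuine use of \ref{(A3)} is rather in the passage from integer cubes to arbitrary cubes and from integer $n$ to real $t$, where one must show that $\int_{tQ(1+\delta)\setminus tQ}W(\w,x,\xi)\dx$ is $o(t^d)$ as $\delta\to 0$, which is where Lemma~\ref{l.weakL1} enters.
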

\begin{proof}
We apply the subadditive ergodic theorem. According to Lemma \ref{l.measurable} the function $\w\mapsto \mu_{\xi}(\w,O)$ is measurable. To show its integrability, we test the affine function $u(x)=\xi x$ as a candidate in the infimum problem. Since $F_1$ is nonnegative, we obtain
	\begin{equation}\label{eq:pointwisebound}
		0\leq \mu_{\xi}(\w,O)\leq \int_O W(\w,x,\xi)\,\dx.
	\end{equation}
Tonelli's theorem and stationarity of $W$ yield that
\begin{equation}\label{eq:mu_bound}
	\mathbb{E}\left[\mu_{\xi}(\cdot,O)\right]\leq \int_O\mathbb{E}[W(\cdot,x,\xi)]\,\mathrm{d}x=\mathbb{E}[\overbar W(\cdot,\xi)]|O|.
\end{equation}
Hence $\mu_{\xi}(\cdot,O)\in L^1(\Omega)$. By stationarity of $W$ also  $\mu_{\xi}$ is $\tau$-stationary in the sense that
\begin{equation}\label{eq:mu_stationary}
	\mu_{\xi}(\tau_z\w,O)=\mu_{\xi}(\w,O+z)\quad\text{ for all }\w\in\Omega.
\end{equation}
Finally, if $(U_j)_{j=1}^n\subset\R^d$ are bounded open sets with
	\begin{equation*}
		\bigcup_{j=1}^n U_j\subset O, \quad U_j\cap U_k=\emptyset\text{ for all }1\leq j<k\leq n, \quad |O\setminus \bigcup_{j=1}^n U_j|=0,
	\end{equation*} 
	and for every $1\leq j\leq n$ we consider a map $v_j\in \xi x+W_0^{1,1}(U_j,\R^m)$, then the function $v=\sum_{j=1}^n v_j\chi_{U_j}$ belongs to $\xi x+W^{1,1}_0(O,\R^m)$ and therefore
	\begin{equation*}
		\mu_{\xi}(\w,O)\leq F_1(\w,v,O)=\sum_{j=1}^n F_1(\w,v_j,U_j).
	\end{equation*}
	Minimizing the right-hand side with respect to the variables $v_j$, we deduce subadditivity in the form of
	\begin{equation}\label{eq:mu_subadditive}
		\mu_{\xi}(\w,O)\leq \sum_{j=1}^N\mu_{\xi}(\w,U_j).
	\end{equation}
	By the subadditive ergodic theorem (see \cite[Theorem 2.7]{AkKr}), a.s. there exists the a~priori random limit
	\begin{equation}\label{eq:integer_convergence}
		\mu_{0}(\w,\xi):=\lim_{\substack{n\to +\infty\\ n\in\N}}\frac{1}{|nQ|}\mu_{\xi}(\w,nQ)
	\end{equation}
	for all cubes of the form $Q=z+(-k,k)^d$ with integer vertices $k\in\N$ and $z\in\Z^d$. 
	
	The extension to arbitrary sequences $t\to +\infty$ and general cubes $Q=x+(-\eta,\eta)^d$ with $x\in\R^d$ and $\eta>0$ follows by approximation as in \cite[Lemma 4.3]{RR21}, relying on the fact that $\overbar W(\cdot,\xi)\in L^1(\Omega)$, which allows us to apply the additive ergodic theorem in the form of Lemma \ref{l.weakL1} to the error terms that are due to integrating $W(\w,x,\xi)$ over sets with small measure (relatively to the scale $t^d$). Similarly, one can prove that $\mu_0$ is invariant under every group action $\tau_{z}$, so by ergodicity it is deterministic. We call this value $\mu_{\rm hom}(\xi)$.

	To fix the issue that the exceptional set where convergence fails may depend on $\xi$, fix $\xi_0,\xi\in\R^{m\times d}$. For comparing $\mu_{\xi}$ and $\mu_{\xi_0}$, we consider cubes of different size. For a cube $Q=x+(-\eta,\eta)^d$ and $s>0$ set $Q(s)=x+(-s\eta,s\eta)^d$ and fix $\delta>0$. There exists a smooth cut-off function $\varphi=\varphi_{\delta,t}\in C_c^{\infty}(\R^d,[0,1])$ such that 
	\begin{equation*}
		\varphi\equiv 1\text{ on }tQ,\quad\quad\varphi\equiv 0\text{ on }\R^d\setminus tQ(1+\delta/2),\quad\quad\|\nabla\varphi\|_{L^{\infty}(\R^d)}\leq \frac{C_Q}{\delta t}.
	\end{equation*}
	Given $v\in\xi x+W^{1,1}_0(tQ,\R^m)$, extend it to $\R^d$ setting $v(x)=\xi x$ on $\R^d\setminus tQ$ and define $\widetilde{v}\in \xi_0 x+W_0^{1,1}(tQ(1+\delta),\R^m)$ as
	\begin{equation*}
		\widetilde{v}(x)=\varphi(x) v(x)+(1-\varphi(x))\xi_0 x.
	\end{equation*}
	From the properties of $\varphi$, we infer that
	\begin{equation*}
		\mu_{\xi_0}(\w,tQ(1+\delta))\leq F_1(\w,\widetilde{v},tQ(1+\delta))
		\leq\int_{tQ}W(\w,x,\nabla v(x))\dx+\int_{tQ(1+\delta)\setminus tQ}W(\w,x,\nabla \widetilde{v}(x))\dx.
	\end{equation*}
	For $x\in tQ(1+\delta)\setminus tQ$ the product rule yields
	\begin{equation*}
	\nabla \widetilde{v}(x)=\frac{1}{2}2\nabla\varphi(x)\otimes(\xi-\xi_0)x+\frac{1}{2} \varphi(x)2\xi+\frac{1}{2}(1-\varphi(x))2\xi_0.
	\end{equation*}
	Since $\tfrac{1}{2}+\tfrac{1}{2}\varphi(x)+\tfrac{1}{2}(1-\varphi(x))=1$, the convexity of $W$ in the last variable allows us to bound the error term by
	\begin{align*}
	\int_{tQ(1+\delta)\setminus tQ}W(\w,x,\nabla \widetilde{v}(x))\dx&\leq \int_{tQ(1+\delta)\setminus tQ} W\left(\w,x,2\nabla\varphi(x)\otimes(\xi-\xi_0)x\right)\dx\\
	&\quad+\int_{tQ(1+\delta)\setminus tQ}W\left(\w,x,2\xi\right)+W\left(\w,x,2\xi_0\right)\dx.
	\end{align*}
	Since $|x|\leq C_Q(1+\delta)t$ on $tQ(1+\delta)$, for $\delta\leq 1$ we deduce that
	\begin{equation*}
	\left|2\nabla\varphi\otimes (\xi-\xi_0)x\right|\leq \frac{C|\xi-\xi_0|}{\delta},
	\end{equation*}
	where $C$ is independent of $\delta$ and $t$. We assume from now on that $|\xi-\xi_0|\leq \delta$.  Passing to the infimum over $v$, we deduce that 
	\begin{equation*}
		\mu_{\xi_0}(\w,tQ(1+\delta))\leq  \mu_{\xi}(\w,tQ)+\int_{tQ(1+\delta)\setminus tQ}\sup_{|\eta|\leq C}W(\w,x,\eta)+W\left(\w,x,2\xi\right)+W\left(\w,x,2\xi_0\right)\dx.
	\end{equation*}
	Assume further that $\xi_0\in\mathbb{Q}^{m\times d}$ and consider $\w$ in the set of full probability such that the limit of $t\mapsto |tQ|^{-1}\mu_{\xi_0}(\w,tQ)$ at $+\infty$ exists for all rational matrices $\xi_0$ and all cubes. The additive ergodic theorem \ref{l.weakL1} applied to the last integral then yields that
	\begin{equation}\label{eq:liminf_est}
		\mu_{\rm hom}(\xi_0)\leq \liminf_{t\to +\infty}\frac{1}{|tQ|}\mu_{\xi}(\w,tQ)+\mathbb{E}\left[\sup_{|\eta|\leq C}\overbar W(\cdot,\eta)+\overbar W\left(\cdot,2\xi\right)+\overbar W\left(\cdot,2\xi_0\right)\right]\left(1-\frac{1}{(1+\delta)^d}\right).
	\end{equation}
	From an analogous construction with the cubes $tQ$ and $tQ(1-\delta)$ we further infer that
	\begin{equation*}
		\mu_{\xi}(\w,tQ)\leq \mu_{\xi_0}(\w,tQ(1-\delta))+\int_{tQ\setminus tQ(1-\delta)}\sup_{|\eta|\leq C}W(\w,x,\eta)+W\left(\w,x,2\xi\right)+W\left(\w,x,2\xi_0\right)\dx,
	\end{equation*}
	which again by the ergodic theorem implies that
	\begin{equation}\label{eq:limsup_est}
		\limsup_{t\to +\infty}\frac{1}{|tQ|}\mu_{\xi}(\w,tQ)\leq \mu_{\rm hom}(\xi_0) +\mathbb{E}\left[\sup_{|\eta|\leq C}\overbar W(\cdot,\eta)+\overbar W\left(\cdot,2\xi\right)+\overbar W\left(\cdot,2\xi_0\right)\right]\left(1-(1-\delta)^d\right).
	\end{equation}
	Combining the two estimates \eqref{eq:liminf_est} and \eqref{eq:limsup_est} yields
	\begin{align*}
		\limsup_{t\to +\infty}\frac{1}{|tQ|}\mu_{\xi}(\w,tQ)&\leq \liminf_{t\to +\infty}\frac{1}{|tQ|}\mu_{\xi}(\w,tQ)
		\\
		&\quad+\mathbb{E}\left[\sup_{|\eta|\leq r_0}\overbar W(\cdot,\eta)+\overbar W\left(\cdot,2\xi\right)+\overbar W\left(\cdot,2\xi_0\right)\right]\left(2-(1-\delta)^d-\frac{1}{(1+\delta)^d}\right).	
	\end{align*}
	Considering a sequence of rational matrices that converges to $\xi$ and then letting $\delta\to 0$, we deduce that 
	\begin{equation*}
		\limsup_{t\to +\infty}\frac{1}{|tQ|}\mu_{\xi}(\w,tQ)\leq \liminf_{t\to +\infty}\frac{1}{|tQ|}\mu_{\xi}(\w,tQ),
	\end{equation*}
	so that the limit exists and hence the convergence holds for a uniform (with respect to $\xi$ and $Q$) set of full probability. The convexity of $\mu_{\rm hom}$ is a consequence of the convexity of the map $\xi\mapsto\mu_{\xi}(\w,O)$, which itself follows from the convexity of the map $\xi\mapsto W(\w,x,\xi)$. 
\end{proof}

\subsection{Proof of the \texorpdfstring{$\Gamma$}{}-liminf inequality by truncation of \texorpdfstring{$W$}{}}\label{sec:liminf}

For the $\Gamma$-liminf inequality, we approximate $W$ from below by integrands with polynomial growth. This technique was already implemented for functionals satisfying $p$ growth from below with $p>1$, see e.g.\ \cite{Mue87,DG_unbounded}. Here, we generalize this method to cover the case of merely superlinear growth condition as in Assumption~\ref{a.1}, where in contrast to the case with $p$-growth from below, the approximation only satisfies so-called non-standard or $p-q$-growth conditions. 

\begin{lemma}\label{l.truncateW}
Assume that $W$ satisfies \ref{(A1)}, \ref{(A2)} and \ref{(A3)}. Then there exists an increasing sequence $W_k:\Omega\times\R^d\times\R^{m\times d}\to [0,+\infty)$ that still satisfies \ref{(A1)}, \ref{(A2)} and \ref{(A3)} and in addition
\begin{itemize}
	\item[a)] $W(\w,x,\xi)=\sup_{k\in\N}W_k(\w,x,\xi)$ for a.e. $x\in\R^d$ and all $\xi\in\R^{m\times d}$;
	\item[b)] $W_k(\w,x,\xi)\leq C_k(1+|\xi|^q)$ for some $1<q<1^*$ (not depending on $k$) and $C_k\in (0,+\infty).$
\end{itemize}
\end{lemma}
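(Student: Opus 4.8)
The plan is to cut $\overbar W$ down to $q$-growth, but to perform the truncation at the level of the Fenchel conjugate: this is what makes the \emph{deterministic} upper bound b) transparent while preserving \ref{(A3)}. I would fix once and for all an exponent $q\in(1,1^*)$, write $q'=q/(q-1)$, and for $k\in\N$ set $g_k(\xi):=k(1+|\xi|^q)$, whose conjugate is $g_k^*(\zeta)=c(q)\,k^{-1/(q-1)}|\zeta|^{q'}-k$ for an explicit constant $c(q)>0$ depending only on $q$. Then define
\[
\overbar W_k(\w,\xi):=\big(\min\{\overbar W(\w,\cdot),g_k\}\big)^{**}(\xi),\qquad W_k(\w,x,\xi):=\overbar W_k(\tau_x\w,\xi),
\]
so that $\overbar W_k(\w,\cdot)$ is the lower convex envelope of $\xi\mapsto\min\{\overbar W(\w,\xi),g_k(\xi)\}$. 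From the general rules $(\min\{f,h\})^*=\max\{f^*,h^*\}$ and $f^{***}=f^*$ one gets, for a.e.\ $\w$, that $\overbar W_k^*(\w,\cdot)=\max\{\overbar W^*(\w,\cdot),g_k^*\}$ and $\overbar W_k(\w,\cdot)=(\overbar W_k^*(\w,\cdot))^*$; this is legitimate because \ref{(A3)} forces $\overbar W^*(\w,\cdot)$ to be finite, hence the maximum is a proper convex lsc function. Property \ref{(A1)} then holds by construction once $\overbar W_k$ is $\mathcal F\otimes\mathcal B^{m\times d}$-measurable, which follows since $\overbar W^*$ is jointly measurable (Remark \ref{r.comments}(i)), hence so is $\overbar W_k^*$, and, being continuous in its last variable for a.e.\ $\w$, its conjugate $\overbar W_k$ is a supremum over rational matrices and therefore jointly measurable.

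The elementary properties are then quick to check. For b) one has $\overbar W_k\le\min\{\overbar W,g_k\}\le g_k=k(1+|\xi|^q)$, so $C_k:=k$ works and in particular $\overbar W_k$ is finite; moreover $\overbar W_k\ge 0$ because the zero function lies below the nonnegative integrand $\min\{\overbar W,g_k\}$, so $\overbar W_k:\Omega\times\R^{m\times d}\to[0,+\infty)$, and it is convex as a convex envelope, which gives \ref{(A2)}. Since $k\mapsto g_k(\xi)$ is increasing for every $\xi$, so is $k\mapsto\min\{\overbar W,g_k\}$, and convexification being order preserving, the sequence $(\overbar W_k)$ is increasing. For a) I would pass to the conjugates: $g_k^*(\zeta)=c(q)k^{-1/(q-1)}|\zeta|^{q'}-k\to-\infty$ pointwise as $k\to\infty$, hence $\inf_k\overbar W_k^*(\w,\cdot)=\overbar W^*(\w,\cdot)$; using the elementary identity $(\inf_k f_k)^*=\sup_k f_k^*$ together with $\overbar W^{**}=\overbar W$ (valid since $\overbar W(\w,\cdot)$ is finite and convex), one obtains $\sup_k\overbar W_k(\w,\cdot)=\sup_k(\overbar W_k^*(\w,\cdot))^*=(\overbar W^*(\w,\cdot))^*=\overbar W(\w,\cdot)$ for a.e.\ $\w$ and every $\xi$, which transfers to the physical space by stationary extension (Remark \ref{r.stationary_extension}).

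It remains to verify \ref{(A3)} for $\overbar W_k$. The local boundedness half is immediate from $0\le\overbar W_k\le\overbar W$, giving $\sup_{|\eta|\le r}\overbar W_k(\w,\eta)\le\sup_{|\eta|\le r}\overbar W(\w,\eta)\in L^1(\Omega)$. For the conjugate half, the explicit formula $\overbar W_k^*(\w,\zeta)=\max\{\overbar W^*(\w,\zeta),\,c(q)k^{-1/(q-1)}|\zeta|^{q'}-k\}$ yields, for every $r>0$,
\[
\sup_{|\zeta|\le r}\overbar W_k^*(\w,\zeta)\le\sup_{|\zeta|\le r}\overbar W^*(\w,\zeta)+c(q)k^{-1/(q-1)}r^{q'},
\]
and the right-hand side is in $L^1(\Omega)$ by \ref{(A3)} for $\overbar W$, the extra term being a constant on the finite measure space $\Omega$. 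The only genuinely non-routine point is the choice of the truncation mechanism: the naive candidate $\overbar W\,\square\,(k|\cdot|^q)$ (infimal convolution) is convex and increases to $\overbar W$, but it only satisfies $\overbar W\,\square\,(k|\cdot|^q)\le\overbar W(\w,0)+k|\xi|^q$, and the random term $\overbar W(\w,0)$ cannot be absorbed into a deterministic constant $C_k$. Convexifying $\min\{\overbar W,g_k\}$ instead --- equivalently, raising the lower boundary of the sublevel sets of $\overbar W^*$ --- is exactly what produces b) with a deterministic $C_k$ while retaining \ref{(A3)}, and it is precisely in the conjugate half of \ref{(A3)} that the finiteness of $\overbar W^*(\w,\cdot)$ is used.
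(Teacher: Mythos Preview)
Your proof is correct and takes a genuinely different route from the paper's. The paper does not truncate via duality; instead it writes $\overbar W(\w,\cdot)=\sup_i\{a_i(\w)+b_i(\w)\cdot\xi\}$ with \emph{bounded} measurable coefficients, sets $\widetilde W_k=\max_{i\le k}\{a_i+b_i\cdot\xi\}$ (which has merely linear growth), and then takes the pointwise maximum with a carefully built radial function $\widetilde\ell(\w,\xi)={\rm co}\big(\min\{\ell(\w,|\xi|),q^{-1}|\xi|^q\}\big)$ to enforce both the deterministic $q$-growth upper bound and the conjugate integrability in \ref{(A3)}. The verification of \ref{(A3)} for the conjugate then requires a somewhat involved computation showing $\sup_{|\eta|\le r}\ell^*(\w,\eta)=\sup_{|\eta|\le r}\overbar W^*(\w,\eta)$.

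Your duality approach is cleaner: the single identity $\overbar W_k^*=\max\{\overbar W^*,g_k^*\}$ makes all of b), the monotonicity, the limit a), and both halves of \ref{(A3)} immediate. The paper's construction, on the other hand, is more explicit (one sees the approximants as maxima of finitely many affine functions), which could be useful if one needed to track further structure, though this is not exploited here. One cosmetic remark: your displayed inequality $\sup_{|\zeta|\le r}\overbar W_k^*\le\sup_{|\zeta|\le r}\overbar W^*+c(q)k^{-1/(q-1)}r^{q'}$ is not literally true when $\sup_{|\zeta|\le r}\overbar W^*$ is very negative; replace it by the exact identity $\sup_{|\zeta|\le r}\max\{f,g\}=\max\{\sup_{|\zeta|\le r}f,\sup_{|\zeta|\le r}g\}$, from which $L^1$-membership follows at once since both entries are integrable (the second being a deterministic constant).
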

\begin{proof}
Due to Remark \ref{r.stationary_extension}, it is enough to perform the construction at the level of $\overbar{W}$, so that \ref{(A1)} comes for free. We first define a suitable lower bound for $\overbar W$. Set $\widetilde{\ell}(\w,\xi)={\rm co}(\min\{\ell(\w,|\xi|),q^{-1}|\xi|^q\})$, where ${\rm co}$ denotes the convex envelope and $\ell$ is the convex, monotone, superlinear function given by Remark \ref{r.comments} (ii). Then $\widetilde{\ell}$ is jointly measurable\footnote{Since all functions are real-valued and bounded below by $0$, the convex envelope is given by the biconjugate, which preserves joint measurability as explained in Remark \ref{r.comments} (ii).} and $\widetilde{\ell}(\w,\xi)\leq W(\w,\xi)$. Next, since $W\geq 0$, following the proof of \cite[Theorem 6.36]{FoLe} there exist measurable and bounded functions $a_i:\Omega\to \R$ and $b_i:\Omega\to\R^m$ such that for all $\w\in\Omega$ with $\overbar{W}(\w,\cdot)$ being convex we have
\begin{equation*}
\overbar W(\w,\xi)=\sup_{i\in\N}\{a_i(\w)+b_i(\w)\cdot\xi\}.	
\end{equation*}
 We define an approximation of $W$ by setting
\begin{equation}
	\widetilde{W}_k(\w,\xi)=\max_{i\leq k}\{a_i(\w)+b_i(\w)\cdot\xi\}.	
\end{equation}
Then $\widetilde{W}_k$ is jointly measurable, increasing in $k$, convex in $\xi$ and satisfies $W_k\uparrow W$ pointwise in $\xi$ for a typical element $\w\in\Omega$ (cf. the beginning of Section \ref{s.proofs}). Finally, we define 
\begin{equation*}
	\overbar W_k(\w,\xi)=\max\{\widetilde{W}_k(\w,\xi),\widetilde{\ell}(\w,\xi)\}.
\end{equation*}
Then $\overbar W_k$ is still jointly measurable, convex in $\xi$ (as required in \ref{(A2)}), increasing in $k$, and due to the lower bound $\widetilde{\ell}(\w,\xi)\leq \overbar W(\w,\xi)$ it also follows that $\overbar W_k\uparrow \overbar W$. Moreover, since the individual functions $a_i,b_i$ are bounded, we have that
\begin{equation*}
	\overbar W_k(\w,\xi)\leq \widetilde{W}_k(\w,\xi)+|\xi|^q\leq C_k(1+|\xi|^q),
\end{equation*}
which also implies the upper bound in \ref{(A3)}. Next, we show the bound on the conjugate function in \ref{(A3)}. To bound it from below, note that by definition of the conjugate
\begin{equation*}
	\sup_{|\eta|\leq r}\overbar W_k^*(\cdot,\eta)\geq - \overbar W_k(\cdot,0)\geq -\overbar W(\cdot,0)\in  L^1(\Omega).
\end{equation*}
For the upper bound, we will use several times that the convex envelope for finite, convex functions $f:\R^{m\times d}\to\R$ that are bounded below by an affine function (here zero) can be characterized by the biconjugate function, i.e., $f^{**}={\rm co}(f)$ (cf. \cite[Remark 4.93 (iii)]{FoLe}) and that $f^{***}=f^*$ (see \cite[Proposition 4.88]{FoLe}), which in combination yields that 
\begin{equation}\label{eq:eraseenvelope}
	{\rm co}(f)^*=f^*.
\end{equation}
Since $f\leq g$ implies $f^*\geq g^*$, it suffices to show that 
\begin{equation*}
\sup_{|\eta|\leq r}(\widetilde{\ell})^{*}(\cdot,\eta)\in L^1(\Omega).
\end{equation*}
According to \eqref{eq:eraseenvelope} and the definition of $\widetilde{\ell}$, we know that
\begin{align*}
(\widetilde{\ell})^{*}(\w,\eta)&=\sup_{\xi\in\R^{m\times d}}\left(\langle \xi,\eta\rangle-\min\{\ell(\w,\xi),q^{-1}|\xi|^q\}\right)=\sup_{\xi\in\R^{m\times d}}\max\{\langle\xi,\eta\rangle-\ell(\w,0,\xi),\langle \xi,\eta\rangle-q^{-1}|\xi|^q\}
\\
&= \max\{\ell^*(\w,\eta),(\tfrac{1}{q}|\cdot|^q)^*(\eta)\}\leq \ell^*(\w,\eta)+\tfrac{1}{q'}|\eta|^{q'},
\end{align*}
where $q'$ is the conjugate exponent to $q$. The function $\eta\mapsto\tfrac{1}{q'}|\eta|^{q'}$ is deterministic and locally bounded, and therefore it suffices to show that
\begin{equation*}
	\sup_{|\eta|\leq r}\ell^*(\w,\eta)\in L^1(\Omega).
\end{equation*}
Recall the construction of $\ell$ in Remark \ref{r.comments} (ii) as the convex envelope of the radial minimum of $W$. According to \eqref{eq:eraseenvelope} we have that
\begin{equation*}
	\ell^*(\w,\eta)=\sup_{\xi\in\R^{m\times d}}\left(\langle\xi,\eta\rangle-\inf_{|z|=|\xi|}\overbar W(\w,z)\right)=\sup_{\xi\in\R^{m\times d}}\sup_{|z|=|\xi|}\left(\langle\xi,\eta\rangle-\overbar W(\w,z)\right).
\end{equation*} 
Taking the supremum over $|\eta|\leq r$ and using the commutativity of suprema, we deduce that
\begin{align*}
\sup_{|\eta|\leq r}\ell^*(\w,\eta)&=\sup_{\xi\in\R^{m\times d}}\sup_{|z|=|\xi|}\sup_{|\eta|\leq r}\left(\langle\xi,\eta\rangle-\overbar W(\w,z)\right) = \sup_{\xi\in\R^{m\times d}}\sup_{|z|=|\xi|}\left(r|\xi|-\overbar W(\w,z)\right)
\\
&= \sup_{z\in\R^{m\times d}}(r|z|-\overbar W(\w,z))=\sup_{z\in\R^{m\times d}}\sup_{|\eta|\leq r}(\langle \eta,z\rangle-\overbar W(\w,z))
\\
&=\sup_{|\eta|\leq r}\sup_{z\in\R^{m\times d}}(\langle \eta,z\rangle-\overbar W(\w,z))=\sup_{|\eta|\leq r}\overbar W^*(\w,\eta),
\end{align*}
which concludes the proof of \ref{(A3)}. 
\end{proof}
In the next proposition we prove the $\Gamma$-liminf inequality for the truncated energies defined via the integrand $W_k$ given by the previous lemma.
\begin{proposition}\label{p.lb_k}
Assume that $W$ satisfies \ref{(A1)}, \ref{(A2)} and \ref{(A3)}. Let $D\subset\R^d$ be an bounded, open set, $u\in W^{1,1}(D)^m$ and $(u_\e)_\e\subset L^1(D)^m$ satisfying $u_{\e}\to u$ in $L^1(D)^m$ as $\e\to 0$. For any $k\in\N$ let $\mu_{{\rm hom},k}(\xi)$ be the function given by Lemma \ref{l.existence_f_hom} applied to the integrand $W_k$ given by Lemma \ref{l.truncateW}. Then
\begin{equation*}
	\int_D \mu_{{\rm hom},k}(\nabla u(x))\dx\leq\liminf_{\e\to 0}\int_D W_k(\w,\tfrac{x}{\e},\nabla u_{\e}(x))\dx.	
\end{equation*}
\end{proposition}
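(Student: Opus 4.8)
The plan is to run the blow-up (localization) method of Fonseca--Müller and De~Giorgi, with the homogenized density entering through the subadditive limit of Lemma~\ref{l.existence_f_hom}, and to exploit the $q$-growth of $W_k$ with $q<1^\ast:=\tfrac{d}{d-1}$ from Lemma~\ref{l.truncateW}~b) precisely at the delicate step, the gluing on thin annuli. First I would assume, without loss of generality, that the right-hand side is finite and, along a (not relabeled) subsequence, equals $\lim_{\e\to0}\int_D W_k(\w,\tfrac{x}{\e},\nabla u_\e)\dx$. As $W_k$ satisfies \ref{(A1)}--\ref{(A3)}, Lemma~\ref{l.compactness} gives relative weak compactness of $\nabla u_\e$ in $L^1(D)^{m\times d}$; combined with $u_\e\to u$ in $L^1$ this yields $u\in W^{1,1}(D)^m$, $\nabla u_\e\rightharpoonup\nabla u$ in $L^1$, and --- since $u_\e$ is then bounded in $W^{1,1}(D)^m$ and the embedding $W^{1,1}(D)\hookrightarrow\hookrightarrow L^q(D)$ is compact for $q<1^\ast$ --- also $u_\e\to u$ \emph{strongly} in $L^q(D)^m$. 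Viewing the finite Radon measures $\lambda_\e$ on $\overline D$ with density $x\mapsto W_k(\w,\tfrac{x}{\e},\nabla u_\e(x))$ on $D$, which have uniformly bounded mass, I pass to a further subsequence with $\lambda_\e\overset{\ast}{\rightharpoonup}\lambda$. By lower semicontinuity on the open set $D$ and the Radon--Nikodym decomposition, $\liminf_{\e\to0}\int_D W_k(\w,\tfrac{x}{\e},\nabla u_\e)\dx\geq\lambda(D)\geq\int_D\tfrac{\mathrm d\lambda}{\mathrm d\mathcal{L}^d}\dx$, so it suffices to show the pointwise bound $\tfrac{\mathrm d\lambda}{\mathrm d\mathcal{L}^d}(x_0)\geq\mu_{{\rm hom},k}(\nabla u(x_0))$ at a.e.\ $x_0\in D$.

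Fix such an $x_0$ which is moreover a Lebesgue point of $\nabla u$, a point of first-order $L^{1^\ast}$-differentiability of $u$ --- so that $\rho^{-1}\bigl(\dashint_{Q_\rho(x_0)}|u-\ell_{x_0}|^{1^\ast}\dx\bigr)^{1/1^\ast}\to0$ as $\rho\to0$, with $\ell_{x_0}(x):=u(x_0)+\nabla u(x_0)(x-x_0)$ and $Q_\rho(x_0)$ the cube of side $\rho$ about $x_0$ (a.e.\ point is such for $u\in W^{1,1}$) --- and such that $m_0:=\lim_{\rho\to0}\rho^{-d}\lambda(Q_\rho(x_0))$ exists, is finite and equals $\tfrac{\mathrm d\lambda}{\mathrm d\mathcal{L}^d}(x_0)$; put $\xi:=\nabla u(x_0)$. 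With $\mu_\xi^\e(\w,O):=\inf\bigl\{\int_O W_k(\w,\tfrac{x}{\e},\nabla v)\dx:\,v-\xi x\in W^{1,1}_0(O)^m\bigr\}$, a change of variables gives $\mu_\xi^\e(\w,Q_\rho(x_0))=\e^d\,\mu_\xi^{W_k}(\w,Q_\rho(x_0)/\e)$ with $\mu_\xi^{W_k}$ the quantity $\mu_\xi$ of Lemma~\ref{l.existence_f_hom} for the integrand $W_k$; writing $Q_\rho(x_0)/\e=t\bigl(\tfrac{2x_0}{\rho}+(-1,1)^d\bigr)$ with $t=\tfrac{\rho}{2\e}\to+\infty$ and using that $\w$ lies in the fixed full-probability set excluded at the start of Section~\ref{s.proofs} (which includes the exceptional sets for all the $W_k$), Lemma~\ref{l.existence_f_hom} yields $\lim_{\e\to0}\rho^{-d}\mu_\xi^\e(\w,Q_\rho(x_0))=\mu_{{\rm hom},k}(\xi)$ for each fixed $\rho>0$.

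The heart of the matter is the gluing. Fix $\delta\in(0,1)$ and $N\in\N$ and place $N$ pairwise disjoint sub-annuli $A^1,\dots,A^N$, each of width $w\sim\delta\rho/N$, inside the shell $Q_\rho(x_0)\setminus Q_{(1-\delta)\rho}(x_0)$. On each $A^j$ I would choose a cut-off $\varphi_j\in C^\infty_c(Q_\rho(x_0);[0,1])$, equal to $1$ inside and $0$ outside $A^j$, \emph{and} --- this is the point --- such that not only $|\nabla\varphi_j|\lesssim w^{-1}$ but also the sharper bound $|\nabla\varphi_j|^q(1-\varphi_j)^{1-q}\lesssim w^{-q}$ holds; this is achieved by letting $1-\varphi_j\sim s^q$ in the transition coordinate $s$ and mollifying. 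Taking $v_\e:=\ell_{x_0}+\varphi_j(u_\e-\ell_{x_0})$ (which lies in $\ell_{x_0}+W^{1,1}_0(Q_\rho(x_0))^m$, and shifting the affine datum by a constant does not change $\mu_\xi^\e$), one has $\mu_\xi^\e(\w,Q_\rho(x_0))\leq\int_{Q_\rho(x_0)}W_k(\w,\tfrac{x}{\e},\nabla v_\e)\dx$; off $A^j$ one has $\nabla v_\e\in\{\nabla u_\e,\xi\}$, and on $A^j$ I would write $\nabla v_\e=\varphi_j\,\nabla u_\e+(1-\varphi_j)\bigl(\xi+\tfrac{\nabla\varphi_j\otimes(u_\e-\ell_{x_0})}{1-\varphi_j}\bigr)$ as a \emph{genuine} convex combination (coefficients $\varphi_j$ and $1-\varphi_j$, summing to $1$), so that convexity of $W_k$ and its $q$-growth give
\[
W_k(\w,\tfrac{x}{\e},\nabla v_\e)\leq\varphi_j\,W_k(\w,\tfrac{x}{\e},\nabla u_\e)+C_k\Bigl((1-\varphi_j)(1+|\xi|^q)+|\nabla\varphi_j|^q(1-\varphi_j)^{1-q}\,|u_\e-\ell_{x_0}|^q\Bigr).
\]
To make the retained term $\int_{A^j}W_k(\w,\tfrac{x}{\e},\nabla u_\e)\dx$ small I would average over the annuli: since $\sum_{j}\int_{A^j}W_k(\w,\tfrac{x}{\e},\nabla u_\e)\dx\leq\int_{Q_\rho(x_0)}W_k(\w,\tfrac{x}{\e},\nabla u_\e)\dx$, for each $\e$ some $j(\e)$ satisfies $\int_{A^{j(\e)}}W_k(\w,\tfrac{x}{\e},\nabla u_\e)\dx\leq N^{-1}\int_{Q_\rho(x_0)}W_k(\w,\tfrac{x}{\e},\nabla u_\e)\dx$, and a pigeonhole argument fixes a single $j_0$ with $j(\e_n)=j_0$ along a subsequence. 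The cut-off error is then controlled using the good choice of $\varphi_{j_0}$ together with $\int_{A^{j_0}}|u_\e-\ell_{x_0}|^q\dx\to\int_{A^{j_0}}|u-\ell_{x_0}|^q\dx\leq\rho^{d+q}\sigma(\rho)^q$ (strong $L^q$-convergence and the first-order $L^{1^\ast}$-smallness of $u-\ell_{x_0}$ at $x_0$, plus Jensen since $q<1^\ast$, where $\sigma(\rho)\to0$); the $(1-\varphi_{j_0})(1+|\xi|^q)$ contribution integrates to $O(\delta\rho^d)$; and $\int_S W_k(\w,\tfrac{x}{\e},\xi)\dx\to|S|\,\mathbb{E}[\overbar{W}_k(\cdot,\xi)]=O(\delta\rho^d)$ by the ergodic theorem (Lemma~\ref{l.weakL1}), where $S$ is the part of the shell on which $\varphi_{j_0}\equiv0$.

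Collecting these estimates along the subsequence (and using $\lambda_{\e_n}(Q_\rho(x_0))\to\lambda(Q_\rho(x_0))$ for the --- a.e. --- radius $\rho$ with $\lambda(\partial Q_\rho(x_0))=0$, together with the cell-formula limit from the second paragraph) leads to
\[
\rho^d\,\mu_{{\rm hom},k}(\xi)\leq\Bigl(1+\tfrac1N\Bigr)\lambda(Q_\rho(x_0))+C\,\delta\,\rho^d\bigl(C_k(1+|\xi|^q)+\mathbb{E}[\overbar{W}_k(\cdot,\xi)]\bigr)+\frac{C\,C_k\,N^q}{\delta^q}\,\rho^d\,\sigma(\rho)^q.
\]
Dividing by $\rho^d$ and letting first $\rho\to0$ (the last term vanishes and $\rho^{-d}\lambda(Q_\rho(x_0))\to m_0$), then $\delta\to0$, then $N\to+\infty$, gives $\mu_{{\rm hom},k}(\xi)\leq m_0=\tfrac{\mathrm d\lambda}{\mathrm d\mathcal{L}^d}(x_0)$, and integrating over $D$ concludes. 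I expect the gluing to be the main obstacle: since $W_k$ has only $q$-growth from above with no $\Delta_2$-type condition and merely superlinear --- not $p$-power with $p>1$ --- coercivity from below, $\nabla u_\e$ is a priori bounded only in $L^1$, so the standard interpolating cut-off (which would force evaluating $W_k$ at a rescaled gradient $c\,\nabla u_\e$) is unavailable; the remedy is the interplay of the sharpened cut-off with $|\nabla\varphi_j|^q(1-\varphi_j)^{1-q}$ bounded, the genuine-convex-combination splitting, the averaging over many annuli, and the compact embedding $W^{1,1}(D)\hookrightarrow\hookrightarrow L^q(D)$ --- which is exactly where the restriction $q<1^\ast$ is used.
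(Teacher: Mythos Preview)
Your argument is correct and follows the same blow-up framework as the paper, but the gluing step is handled differently. The paper introduces an extra parameter $\eta\in(0,1)$ and sets $u_{\e,\eta}=\eta\varphi_\eta u_\e+\eta(1-\varphi_\eta)L_{u,x_0}$, which yields a \emph{three}-term convex combination $\nabla u_{\e,\eta}=\eta\varphi_\eta\nabla u_\e+\eta(1-\varphi_\eta)\nabla u(x_0)+(1-\eta)\tfrac{\eta\nabla\varphi_\eta\otimes(u_\e-L_{u,x_0})}{1-\eta}$; the first term is then controlled by the elementary convexity bound $W_k(\eta\nabla u_\e)\leq\eta W_k(\nabla u_\e)+(1-\eta)W_k(0)$ combined with the ergodic theorem, and one lets $\eta\uparrow 1$ last. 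So the paper never needs a special cut-off profile or the annulus averaging. Your route, by contrast, keeps a two-term convex combination and absorbs the $(1-\varphi_j)^{-1}$ singularity into the cut-off via the profile $1-\varphi_j\sim s^q$, which makes $|\nabla\varphi_j|^q(1-\varphi_j)^{1-q}$ bounded; this is a nice device, though it is worth noting that Lipschitz regularity of $\varphi_j$ suffices (the $C^\infty$ requirement and mollification are inessential). Your De~Giorgi averaging over $N$ annuli, and hence the $(1+1/N)$ factor, are actually superfluous: since $\{\varphi_{j_0}>0\}\cup A^{j_0}\subset Q_\rho(x_0)$, the contributions $\int_{\{\varphi_{j_0}=1\}}W_k(\nabla u_\e)+\int_{A^{j_0}}\varphi_{j_0}W_k(\nabla u_\e)$ are already bounded by $\lambda_\e(Q_\rho(x_0))$, so one may take $N=1$. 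In summary, both approaches succeed for the same reason---the $q$-growth of $W_k$ with $q<1^*$ together with the $L^{1^*}$-differentiability of $W^{1,1}$-functions---and yours is a valid, if slightly more elaborate, alternative to the paper's $\eta$-trick.
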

\begin{proof}
To reduce notation, we drop $k$ from the notation and assume in addition that $W$ satisfies the growth condition
\begin{equation}\label{eq:polygrowth}
W(\w,x,\xi)\leq C(|\xi|^q+1)	
\end{equation}
for some $1<q<1^*$ and a.e. $x\in\R^d$. Without loss of generality, we assume that the limit inferior in the statement is finite and, passing to a non-relabeled subsequence, it is actually a limit. Following the classical blow-up method, define the absolutely continuous Radon-measure $m_{\e}$ on $D$ by its action on Borel sets $B\subset D$ via
\begin{equation*}
	m_{\e}(B)=\int_B W(\w,\tfrac{x}{\e},\nabla u_{\e}(x))\dx.
\end{equation*}
By our assumption, the sequence of measures $m_{\e}$ is equibounded, so that (up to passing to a further non-relabeled subsequence) $m_{\e}\overset{\star}{\rightharpoonup} m$ for some nonnegative finite Radon measure $m$ (possibly depending on $\w$). Using  Lebesgue's decomposition theorem, we can write $m= \widetilde{f}(x)\L^d+ \nu$, with $\nu$ a nonnegative measure $\nu$ that is singular with respect to the Lebesgue measure. Since $D$ is open, the weak$^*$ convergence implies that
\begin{equation*}
	\liminf_{\e\to 0}F_{\e}(\w,u_{\e},D)=\liminf_{\e\to 0}m_{\e}(D)\geq m(D)\geq \int_{D}\widetilde{f}(x)\dx.
\end{equation*} 
Hence it suffices to show that $\widetilde{f}(x_0)\geq \mu_{\rm hom}(\nabla u(x_0))$ for a.e. $x_0\in D$. The Besicovitch differentiation theorem \cite[Theorem 1.153]{FoLe} and Portmanteau's theorem imply that for a.e. $x_0\in D$ we have 
\begin{equation}\label{eq:besicovitch}
	\widetilde{f}(x_0)=\lim_{r\to 0}\frac{m(\overline{Q_{r}(x_0)})}{r^d}\geq \limsup_{r\to 0}\limsup_{\e\to 0}\frac{m_{\e}(\overline{Q_{r}(x_0)})}{r^d}\geq \limsup_{r\to 0}\limsup_{\e\to 0}\frac{m_{\e}(Q_{r}(x_0))}{r^d}.
\end{equation}
Therefore it suffices to show that for a.e. $x_0\in D$ we have (along the chosen subsequence in $\e$)
	\begin{equation*}
		\limsup_{r\to 0}\limsup_{\e\to 0}\dashint_{Q_r(x_0)}W(\w,\tfrac{x}{\e},\nabla u_{\e}(x))\dx\geq \mu_{\rm hom}(\nabla u(x_0)).
	\end{equation*}
	We claim further that
	\begin{equation}\label{eq:radialapprox}
		\limsup_{\eta\uparrow 1}\limsup_{r\to 0}\limsup_{\e\to 0}\dashint_{Q_r(x_0)}W(\w,\tfrac{x}{\e},\eta\nabla u_{\e}(x))\dx\leq \limsup_{r\to 0}\limsup_{\e\to 0}\dashint_{Q_r(x_0)}W(\w,\tfrac{x}{\e},\nabla u_{\e}(x))\dx,
	\end{equation}
	so that it suffices to show that
	\begin{equation}\label{eq:blowup}
		\limsup_{\eta\uparrow 1}\limsup_{r\to 0}\limsup_{\e\to 0}\dashint_{Q_r(x_0)}W(\w,\tfrac{x}{\e},\eta\nabla u_{\e}(x))\dx\geq \mu_{\rm hom}(\nabla u(x_0)).
	\end{equation}
	To verify \eqref{eq:radialapprox}, note that due to the convexity of the map $\xi\mapsto W(\w,y,\xi)$, for $\eta\in (0,1)$ it holds that
	\begin{equation*}
	W(\w,\tfrac{x}{\e},\eta\nabla u_{\e}(x))\leq	\eta W(\w,\tfrac{x}{\e},\nabla u_{\e}(x))+(1-\eta)W(\w,\tfrac{x}{\e},0).
	\end{equation*}
	Since $\eta\leq 1$, integrating this inequality over $Q_r(x_0)$ and taking the average, we obtain that
	\begin{equation*}
	\dashint_{Q_r(x_0)}W(\w,\tfrac{x}{\e},\eta\nabla u_{\e}(x))\dx\leq \dashint_{Q_r(x_0)}W(\w,\tfrac{x}{\e},\nabla u_{\e}(x))\dx+(1-\eta)\dashint_{Q_r(x_0)}W(\w,\tfrac{x}{\e},0)\dx
	\end{equation*}
	and thus it suffices to show that the last integral vanishes. As $\e\to 0$, we can apply the ergodic theorem and obtain
	\begin{equation*}
	\lim_{\eta\uparrow 1}\lim_{r\to 0}\lim_{\e\to 0}(1-\eta)\dashint_{Q_r(x_0)}W(\w,\tfrac{x}{\e},0)=\lim_{\eta\uparrow 1}(1-\eta)\mathbb{E}[\overbar W(\cdot,0)]=0.
	\end{equation*}
	To show \eqref{eq:blowup}, we fix $x_0$ to be a Lebesgue point of $u$ and $\nabla u$, and such that \eqref{eq:besicovitch} holds true. Using the Besicovitch derivation theorem on convex, open sets (see \cite[Remark 1.154 (ii)]{FoLe}), we may impose additionally that
	\begin{equation}\label{eq:besicovitch_open}
	\widetilde{f}(x_0)=\lim_{r\to 0}\frac{m(Q_{r}(x_0))}{r^d}.
	\end{equation}
	For such $x_0$ we define the linearization of $u$ at $x_0$ by $L_{u,x_0}(x)=u(x_0)+\nabla u(x_0)(x-x_0)$.
	
	In what follows, we drop the dependence on $x_0$ from cubes and tacitly assume that they are centered at $x_0$. We modify $u_{\e}$ close to $\partial Q_r$ such that the modification attains the boundary value $L_{u,x_0}$: for $0<\eta<1$ we pick a cut-off function $\varphi_{\eta}\in C_c^{\infty}(Q_r,[0,1])$ such that $\varphi_{\eta}(x)=1$ on $Q_{\eta r}$, which can be chosen such that $\|\nabla\varphi_{\eta}\|_{\infty}\leq \tfrac{C}{(1-\eta)r}$. Define then the function $u_{\e,\eta}:D\to\R^m$ by
	\begin{equation*}
		  u_{\e,\eta}=\eta\varphi_{\eta}u_{\e}+\eta(1-\varphi_{\eta})L_{u,x_0}.
	\end{equation*}
	Since $u_{\e,\eta}=\eta u_{\e}$ on $Q_{\eta r}$, we can estimate the energy of $u_{\e,\eta}$ on $Q_r$ by
	\begin{equation}\label{eq:split}
		F_{\e}(\w,u_{\e,\eta},Q_r)\leq F_{\e}(\w,\eta u_{\e},Q_r)+F_{\e}(\w,u_{\e,\eta},Q_{r}\setminus Q_{\eta r}).
	\end{equation}
	We argue that the last term is asymptotically negligible relative to $r^d$. To reduce notation, we set $S_{\eta}^r=Q_{r}\setminus Q_{\eta r}$. 
	Since $u_{\e}\in W^{1,1}(D)^m$ due to the global energy bound, the product rule yields that
	\begin{equation*}
		\nabla u_{\e,\eta}=\eta\varphi_{\eta}\nabla u_{\e}+\eta(1-\varphi_{\eta})\nabla u(x_0)+(1-\eta)\frac{\eta\nabla\varphi_{\eta}\otimes( u_{\e}-L_{u,x_0})}{1-\eta},
	\end{equation*}
	so that $0\leq \eta,\varphi_{\eta}\leq 1$ and the convexity of $\xi\mapsto W(\w,y,\xi)$ imply the estimate
	\begin{align}\label{eq:preparetoaverage}
	F_{\e}(\w,u_{\e,\eta},Q_{r}\setminus Q_{\eta r})&\leq F_{\e}(\w,u_{\e},Q_{r}\setminus Q_{\eta r})+F_{\e}(\w,L_{u,x_0},Q_{r}\setminus Q_{\eta r})\nonumber
	\\
	&\quad+\int_{Q_{r}\setminus Q_{\eta r}}W\left(\w,\tfrac{x}{\e},\frac{\eta\nabla\varphi_{\eta}(x)\otimes( u_{\e}(x)-L_{u,x_0}(x))}{1-\eta}\right)\dx.
	\end{align}
	Let us first estimate the last term, using the polynomial bound \eqref{eq:polygrowth}. Inserting the uniform bound on $\nabla\varphi_{\eta}$, we find that
	\begin{align*}
		&\int_{Q_{r}\setminus Q_{\eta r}}W\left(\w,\tfrac{x}{\e},\frac{\eta\nabla\varphi_{\eta}(x)\otimes( u_{\e}(x)-L_{u,x_0}(x))}{1-\eta}\right)\dx
		\\
		&\leq C\int_{Q_{r}\setminus Q_{\eta r}}(1-\eta)^{-2q}r^{-q}|u_{\e}(x)-L_{u,x_0}(x)|^q+1\dx.
	\end{align*}
	Inserting this estimate into \eqref{eq:preparetoaverage} and the resulting bound into \eqref{eq:split}, we infer that
	\begin{align}\label{eq:splitall}
	\frac{1}{r^d}F_{\e}(\w,u_{\e,\eta},Q_r)&\leq \frac{1}{r^d}F_{\e}(\w,\eta u_{\e},Q_r)+\frac{1}{r^d}F_{\e}(\w,u_{\e},Q_r\setminus Q_{\eta r})+\frac{1}{r^d}F_{\e}(\w,L_{u,x_0},Q_r\setminus Q_{\eta r})\nonumber
	\\
	&\quad +C(1-\eta)^{-2q}\dashint_{Q_r}\left|\frac{u_{\e}(x)-L_{u,x_0}(x)}{r}\right|^q\dx+C\frac{1}{r^d}|Q_r\setminus Q_{\eta r}|.
	\end{align} 
	We now let $\e\to 0$. To estimate the left-hand side term from below, note that $u_{\e,\eta}=\eta L_{u,x_0}$ on $\partial Q_r$, so that by a change of variables we have $F_{\e}(\w,u_{\e,\eta},Q_r)\geq \mu_{\eta\nabla u(x_0)}(\w,Q_r/\e)\e^{d}$. In particular, from Lemma \ref{l.existence_f_hom} we deduce that
	\begin{equation}\label{eq:lowerbound}
		\mu_{\rm hom}(\eta\nabla u(x_0))\leq\liminf_{\e\to 0}\frac{1}{r^d}F_{\e}(\w,u_{\e,\eta},Q_r).
	\end{equation}
	To treat the right-hand side terms in \eqref{eq:splitall}, we note that the second term is $r^{-d}m_{\e}(Q_r\setminus Q_{\eta r})$, so that we can estimate it using Portmanteau's theorem and switching to the closure of $Q_r\setminus Q_{\eta r}$. For the third term we can apply the ergodic theorem to the integrand $W(\w,\tfrac{x}{\e},\nabla u(x_0))$. In order to pass to the limit in the fourth term, we note that $u_{\e}$ is bounded in $W^{1,1}(D)^m$ and hence strongly converging in $L^q(D)$ due to the Sobolev embedding. In total, we obtain that
	\begin{align*}
	\mu_{\rm hom}(\eta\nabla u(x_0))&\leq\limsup_{\e\to 0}\frac{1}{r^d}F_{\e}(\w,\eta u_{\e},Q_r)+\frac{1}{r^d}m(\overline{Q_r}\setminus Q_{\eta r})+\mathbb{E}[\overbar W(\cdot,\nabla u(x_0))](1-\eta^d)\\
	&\quad +C(1-\eta)^{-2q}\dashint_{Q_r}\left|\frac{u(x)-L_{u,x_0}(x)}{r}\right|^q\dx+C(1-\eta^d).
	\end{align*}
	Next, we let $r\to 0$. On the one hand, by \eqref{eq:besicovitch} and \eqref{eq:besicovitch_open} we have that
	\begin{equation*}
	\lim_{r\to 0}\frac{1}{r^d}m(\overline{Q_r}\setminus Q_{\eta r})=\lim_{r\to 0}\frac{1}{r^d}m(\overline{Q_r})-\lim_{r\to 0}\frac{\eta^d}{(\eta r)^d}m(Q_{\eta r})=\widetilde{f}(x_0)-\eta^d\widetilde{f}(x_0).
	\end{equation*}
	On the other hand, the $L^{1^*}$-differentiability of $W^{1,1}$-functions (cf. \cite[Theorem 2, p. 230]{EvGa}) implies that
	\begin{equation*}
		\lim_{r\to 0}\dashint_{Q_r}\left|\frac{u(x)-L_{u,x_0}(x)}{r}\right|^q\dx=0.	
	\end{equation*}
	Gathering these two pieces of information, we obtain that
	\begin{equation*}
		\mu_{\rm hom}(\eta \nabla u(x_0))\leq \limsup_{r\to 0}\limsup_{\e\to 0}\frac{1}{r^d}F_{\e}(\w,\eta u_{\e},Q_r)+(1-\eta^d)\Big(\widetilde{f}(x_0)+\mathbb{E}[\overbar W(\cdot,\nabla u(x_0))]+C\Big).
	\end{equation*}
	Finally, letting $\eta\to 1$, the continuity of $\mu_{\rm hom}$ (which follows from convexity) yields
	\begin{equation*}
		\mu_{\rm hom}(\nabla u(x_0))\leq \limsup_{\eta\to 1}\limsup_{r\to 0}\limsup_{\e\to 0}\frac{1}{r^d}F_{\e}(\w,\eta u_{\e},Q_r),
	\end{equation*}
	which coincides with \eqref{eq:blowup} and thus we conclude the proof.
\end{proof}
We next need to prove that $W_{{\rm hom},k}= \mu_{{\rm hom},k}$ because the formula for $W_{{\rm hom},k}$ allows us to pass to the limit in $k$, while this is not obvious for the multi-cell formula defining $\mu_{{\rm hom},k}$. 

\begin{lemma}\label{l.F_pot_formula}
Assume that $W$ satisfies \ref{(A1)}, \ref{(A2)} and \ref{(A3)}. For $k\in\N$ let $W_k$ be the integrand given by Lemma \ref{l.truncateW} and denote by $W_{{\rm hom},k}$ and $\mu_{{\rm hom},k}$ the functions given by the Lemmata \ref{l.defW_hom} and \ref{l.existence_f_hom}, respectively. Then for all $\xi\in\R^{m\times d}$ it holds that
	\begin{equation}\label{eq:multi=single}
		W_{{\rm hom},k}(\xi)=\mu_{{\rm hom},k}(\xi).
	\end{equation}
\end{lemma}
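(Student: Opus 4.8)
The plan is to establish the two inequalities in \eqref{eq:multi=single} separately; since both $W_{{\rm hom},k}$ and $\mu_{{\rm hom},k}$ are deterministic, it is enough to argue, for each fixed $\xi$, on an element $\w$ of the full‑probability set on which Lemmata~\ref{l.weakL1},~\ref{l.defW_hom},~\ref{l.existence_f_hom} and Proposition~\ref{p.lb_k} hold for the integrand $W_k$. Write $F_1^k(\w,u,O)=\int_O W_k(\w,x,\nabla u)\dx$ and $\mu_{\xi}^k(\w,O)=\inf\{F_1^k(\w,u,O):u-\xi x\in W^{1,1}_0(O,\R^m)\}$ for the objects of Lemma~\ref{l.existence_f_hom} associated with $W_k$, and let $\phi_{\xi}^k,\overbar h_{\xi}^k$ be the corrector and the minimizing potential field of Lemma~\ref{l.defW_hom} for $W_k$. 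For $\mu_{{\rm hom},k}(\xi)\le W_{{\rm hom},k}(\xi)$ I would feed $u=\xi x$ and $u_{\e}(x)=\xi x+\e\phi_{\xi}^k(\w,x/\e)$ into Proposition~\ref{p.lb_k} on a fixed cube $D$: by Lemma~\ref{l.defW_hom} one has $\e\phi_{\xi}^k(\w,\cdot/\e)\rightharpoonup0$ in $W^{1,1}(D)^m$, hence $u_{\e}\to\xi x$ in $L^1(D)^m$ by Rellich--Kondrachov; moreover $W_k(\w,x/\e,\nabla u_{\e})=\overbar W_k(\tau_{x/\e}\w,\xi+\overbar h_{\xi}^k(\tau_{x/\e}\w))$ with $\overbar W_k(\cdot,\xi+\overbar h_{\xi}^k)\in L^1(\Omega)$ (its expectation is the finite number $W_{{\rm hom},k}(\xi)$), so Lemma~\ref{l.weakL1} gives $\int_D W_k(\w,x/\e,\nabla u_{\e})\dx\to W_{{\rm hom},k}(\xi)|D|$ and Proposition~\ref{p.lb_k} yields the claim. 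The key feature is that no cut‑off near $\partial D$ is required, since no boundary condition is imposed; this is exactly what makes this direction immune to the non‑standard growth of $W_k$.

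For the reverse inequality $W_{{\rm hom},k}(\xi)\le\mu_{{\rm hom},k}(\xi)$ I would construct, for every $\delta>0$, an admissible competitor in the definition of $W_{{\rm hom},k}$ out of near‑minimizers of the cube problems. Since the convergence in Lemma~\ref{l.existence_f_hom} also holds in $L^1(\Omega)$ (it is part of the subadditive ergodic theorem) and $|tQ|^{-1}\mu_{\xi}^k(\cdot,tQ)$ is bounded in expectation by $\mathbb{E}[\overbar W_k(\cdot,\xi)]$ via \eqref{eq:pointwisebound}--\eqref{eq:mu_bound}, I fix the symmetric cube $Q=(-\tfrac12,\tfrac12)^d$ and $t\in\N$ with $|tQ|^{-1}\mathbb{E}[\mu_{\xi}^k(\cdot,tQ)]\le\mu_{{\rm hom},k}(\xi)+\delta$, and (by measurable selection) a jointly measurable near‑minimizer $\w\mapsto u_t^{\w}\in\xi x+W^{1,1}_0(tQ)^m$ with $F_1^k(\w,u_t^{\w},tQ)\le\mu_{\xi}^k(\w,tQ)+\delta|tQ|$. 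Tiling $\R^d$ by the cubes $tj+tQ$, $j\in\Z^d$, I set $\Psi_t^{\w}(x)=u_t^{\tau_{tj}\w}(x-tj)+\xi\,tj$ on $tj+tQ$; this is globally $W^{1,1}_{\rm loc}$ because neighbouring pieces share the trace $\xi x$, and it obeys the covariance $\nabla\Psi_t^{\tau_{tj_0}\w}(\cdot)=\nabla\Psi_t^{\w}(\cdot+tj_0)$ for $j_0\in\Z^d$. The candidate is
\begin{equation*}
\overbar h_t(\w):=\dashint_{tQ}\nabla\Psi_t^{\tau_w\w}(-w)\,\mathrm{d}w-\xi .
\end{equation*}
Joint measurability of $\{\tau_z\}$ makes $\overbar h_t$ measurable; \eqref{eq:A_bound} applied to $u_t^{\w}$ with $r=1$ together with \ref{(A3)} for $W_k$ gives $\overbar h_t\in L^1(\Omega)^{m\times d}$; invariance gives $\mathbb{E}[\overbar h_t+\xi]=|tQ|^{-1}\mathbb{E}[\int_{tQ}\nabla u_t^{\cdot}\,\mathrm{d}v]=\xi$, i.e.\ $\mathbb{E}[\overbar h_t]=0$; and, using the $t\Z^d$‑covariance of $\nabla\Psi_t$ to identify the average over $tQ$ with the one over $x+tQ$, the stationary extension satisfies $\overbar h_t(\tau_x\w)+\xi=\dashint_{tQ}\nabla\Psi_t^{\tau_w\w}(x-w)\,\mathrm{d}w$, which is curl‑free in $x$ because the distributional mixed partial derivatives of each $\Psi_t^{\tau_w\w}$ commute. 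Hence $\overbar h_t\in(F^1_{\rm pot})^m$.

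To estimate its energy, Jensen's inequality (convexity \ref{(A2)}), Fubini's theorem and the substitution $\w=\tau_{-w}\w'$ combined with $\overbar W_k(\tau_{-w}\w',\cdot)=W_k(\w',-w,\cdot)$ yield
\begin{align*}
\mathbb{E}[\overbar W_k(\cdot,\xi+\overbar h_t)]
&\le\dashint_{tQ}\mathbb{E}\big[\overbar W_k(\cdot,\nabla\Psi_t^{\tau_w\cdot}(-w))\big]\,\mathrm{d}w
=\frac{1}{|tQ|}\mathbb{E}\Big[\int_{tQ}W_k(\cdot,v,\nabla u_t^{\cdot}(v))\,\mathrm{d}v\Big]\\
&\le\frac{1}{|tQ|}\mathbb{E}[\mu_{\xi}^k(\cdot,tQ)]+\delta
\le\mu_{{\rm hom},k}(\xi)+2\delta,
\end{align*}
where the middle equality uses symmetry of $Q$ (change of variables $v=-w$) and that $\Psi_t^{\w'}$ coincides with $u_t^{\w'}$ on the tile $tQ$, and the second inequality is near‑optimality of $u_t^{\w}$. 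Since $W_{{\rm hom},k}(\xi)\le\mathbb{E}[\overbar W_k(\cdot,\xi+\overbar h_t)]$ and $\delta$ is arbitrary, this closes the argument.

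The step I expect to be the main obstacle is the construction of $\overbar h_t$: extending a single cube minimizer $tQ$‑periodically does \emph{not} yield a stationary field, since the medium itself is not periodic, and the coherent tiling with the realizations $\tau_{tj}\w$ followed by averaging against the group action is precisely what repairs this. One must verify \emph{at once} that the averaged field is both $\R^d$‑stationary and curl‑free---both facts rest on the $t\Z^d$‑covariance of $\nabla\Psi_t$ and on the commutation of distributional mixed derivatives---and then that the Jensen/Fubini manipulation genuinely transfers the near‑optimality of the cube problems to the corrector functional. The non‑standard growth of $W_k$ plays no role here, the argument using only \ref{(A1)}--\ref{(A3)}; the routine points left to check are the measurable selection of $u_t^{\w}$, the $L^1(\Omega)$‑convergence in the subadditive ergodic theorem, and---because $W_{{\rm hom},k}$ and $\mu_{{\rm hom},k}$ are finite and convex, hence continuous---that it suffices to treat each $\xi$ on its own full‑probability set.
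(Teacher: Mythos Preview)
Your proposal is correct and follows essentially the same strategy as the paper. For $W_{{\rm hom},k}\le\mu_{{\rm hom},k}$ both you and the paper build a competitor in $(F^1_{\rm pot})^m$ by averaging (near-)minimizers of the cube problem against the group action and then apply Jensen together with the $L^1(\Omega)$-convergence in the subadditive ergodic theorem; your tiling construction $\Psi_t$ is the $t\Z^d$-covariant variant of the paper's zero-extension $v_{\xi,\e}$, and the curl-free/mean-zero checks are the same in spirit. For $\mu_{{\rm hom},k}\le W_{{\rm hom},k}$ your route is in fact slightly cleaner than the paper's: you feed the corrector sequence directly into Proposition~\ref{p.lb_k}, whereas the paper passes through an auxiliary minimisation with zero-average gradient and has to absorb the average correction via a convexity trick and a limit $t\uparrow 1$; both arguments rest on the same two ingredients (Lemma~\ref{l.weakL1} for the corrector energy and the liminf inequality), so the difference is cosmetic.
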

\begin{proof} 
	Again we drop $k$ from the notation and just assume that \eqref{eq:polygrowth} holds true. Fix a cube $Q\subset D$. For every $\e>0$ and $t\in (0,1)$ consider a function $u_{\e,t}\in W^{1,1}(D)^m$ with $\dashint_Q u_{\e,t}\dx=0$ and $\dashint_Q\nabla u_{\e,t}\dx=0$, satisfying
	\begin{equation*}
		\frac{1}{|Q|}F_{\e}(\w,u_{\e,t},Q)=\min\left\{\dashint_Q W(\w,\tfrac{x}{\e},t\xi+\nabla u(x))\dx:\,\dashint_Q\nabla u\dx=0\right\}, 
	\end{equation*}
	where the minimum exists due to convexity and the superlinear growth of $W$.
	Since $u=0$ is admissible in the above minimization problem and has uniformly bounded energy with respect to $\e\to 0$, it follows from the superlinear growth of $W$ and the Poincar\'e inequality that up to a subsequence (not relabeled) we have $u_{\e,t}\to u_t$ as $\e\downarrow 0$ in $L^1(Q)^m$ for some $u_t\in W^{1,1}(Q)^m$ with $\dashint_Q\nabla u_t\dx=0$. Due to Proposition \ref{p.lb_k} with $Q=D$ and Jensen's inequality we have that
	\begin{align}\label{eq:zero_average_gradient}
		\mu_{\rm hom}(t\xi)&=\mu_{\rm hom}\left(\dashint_Qt\xi+\nabla u_t(x)\dx\right)\leq \dashint_Q \mu_{\rm hom}(t\xi+\nabla u_t(x))\dx\nonumber
		\\
		&\leq \liminf_{\e\to 0}\min\left\{\dashint_Q W(\w,\tfrac{x}{\e},t\xi+\nabla u(x))\dx:\,\dashint_Q \nabla u\dx=0\right\}.
	\end{align}
	Next, let $\phi_{\xi}:\Omega\to W^{1,1}_{\rm loc}(\R^d,\R^m)$ be the function given by Lemma \ref{l.defW_hom}. Define the $W^{1,1}_{\rm loc}(\R^d,\R^m)$-valued random field $v_{\e,t}$ by
	\begin{equation*}
		v_{\e,t}(x)=t\e\phi_{\xi}(\w)(\tfrac{x}{\e})-\dashint_{Q}t\nabla\phi_{\xi}(\w)(\tfrac{y}{\e})x\,\mathrm{d}y.
	\end{equation*}
	Then $\int_Q\nabla v_{t,\e}\dx=0$ and therefore almost surely\footnote{Here we prove the equality of two deterministic quantities. Therefore we can exclude a null set depending on any fixed $\xi$.}
	\begin{align}\label{eq:corrector_good}
		\inf\left\{\dashint_Q W(\w,\tfrac{x}{\e},t\xi+\nabla u(x))\dx:\,\dashint_Q\nabla u\dx=0\right\}&\leq \dashint_QW(\w,\tfrac{x}{\e},t\xi+\nabla v_{t,\e}(x))\dx\nonumber
		\\
		&=\dashint_Q W\left(\w,\tfrac{x}{\e},t\xi+t\overbar h_{\xi}(\tau_{x/\e}\w)-t\dashint_Q\overbar h_{\xi}(\tau_{y/\e}\w)\dy\right)\dx,
	\end{align}
	where $\overbar h_{\xi}$ is given by Lemma \ref{l.defW_hom}. Let us set $\overbar H_{\xi,\e}(\w)=\dashint_Q \overbar h_{\xi}(\tau_{y/\e}\w)\dy$. Due to convexity we can bound the last integral by
	\begin{align*}
	\dashint_Q W\left(\w,\tfrac{x}{\e},t\xi+t\overbar h_{\xi}(\tau_{x/\e}\w)-t\overbar H_{\xi,\e}(\w)\right)\dx	&\leq t\dashint_Q W\left(\w,\tfrac{x}{\e},\xi+\overbar h_{\xi}(\tau_{x/\e}\w)\right)\dx
	\\
	&\quad+(1-t)\dashint_Q W\left(\w,\tfrac{x}{\e},-\tfrac{t}{1-t}\overbar H_{\xi,\e}(\w)\right)\dx.
	\end{align*}
	The ergodic theorem \ref{l.weakL1} implies that 
	\begin{align}
		\lim_{\e\to 0}\overbar H_{\xi,\e}(\w)=\lim_{\e\to 0}\dashint_Q\overbar h_{\xi}(\tau_{y/\e}\w)\dy&=\mathbb{E}[\overbar h_{\xi}]=0,\label{eq:ergodic_H}
		\\
		\lim_{\e\to 0}\dashint_Q W(\w,\tfrac{x}{\e},\xi+\overbar h_{\xi}(\tau_{x/\e}\w))\dx&=\mathbb{E}[\overbar W(\cdot,\xi+h_{\xi})]=W_{\rm hom}(\xi).\label{eq:ergodic_energy}
	\end{align}
	Due to \eqref{eq:ergodic_H} we may assume for $\e>0$ sufficiently small that $|\overbar H_{\xi,\e}(\w)|\leq (1-t)/t$. Inserting the above convexity estimate into \eqref{eq:corrector_good}, by the ergodic theorem we find that
	\begin{equation*}
	\mu_{\rm hom}(t\xi)\leq W_{\rm hom}(\xi)+(1-t)\mathbb{E}\left[\sup_{|\eta|\leq 1}\overbar W(\cdot,\eta)\right].
	\end{equation*}
	Letting $t\uparrow 1$, we conclude the estimate $\mu_{\rm hom}(\xi)\leq W_{\rm hom}(\xi)$ due to the continuity of $\mu_{\rm hom}$. 
	
	We still need to show the reverse inequality. Here we can closely follow \cite[Lemma 4.13]{RR21}. Lemma \ref{l.measurable} implies that for every $\e>0$ there exists a measurable function $u_{\xi,\e}:\Omega\to W^{1,1}_0(Q/\e)^m$ such that almost surely
	\begin{equation*}
		F_1(\w,\xi x+u_{\xi,\e}(\w),Q/\e)=\mu_{\xi}(\w,Q/\e):=\inf\{F_1(\w,u,Q_1/\e):\,u\in \xi x+W^{1,1}_0(Q/\e)^m\}.
	\end{equation*}
	However, we switch to a jointly measurable map. From \cite[Lemma 16, p. 196]{DS} we deduce that there exist $\mathcal{F}\otimes\L^d$-measurable functions $v_{\xi,\e}:\Omega\times Q/\e\to\R^m$ and $b_{\xi,\e}:\Omega\times Q/\e\to\R^{m\times d}$ such that $v_{\xi,\e}(\w,\cdot)=u_{\xi,\e}(\w)$ and $b_{\xi,\e}(\w,\cdot)=\nabla u_{\xi,\e}(\w)$ for a.e. $\w\in\Omega$. In particular, for a.e. $\w\in\Omega$ we have $v_{\xi,\e}(\w,\cdot)\in W_0^{1,1}(Q/\e)^m$ and $\nabla v_{\xi,\e}(\w,\cdot)=b_{\xi,\e}(\w,\cdot)$. With a slight abuse of notation, we therefore write $b_{\xi,\e}=\nabla v_{\xi,\e}$. By Remark \ref{r.stationary_extension} we can assume that the set of $\w$, for which these properties hold, is invariant under the group action $\tau_{x}$ for almost all $x\in\R^d$. Finally, we extend $v_{\xi,\e}$ and $\nabla v_{\xi,\e}$ to $\Omega\times(\R^d\setminus Q/\e)$ by $0$. This extension is jointly measurable on $\Omega\times\R^d$. We now define $\overbar h_{\xi,\e}\in L^1(\Omega)^{m\times d}$ by
	\begin{equation*}
		h_{\xi,\e}(\w)=\dashint_{Q/\e}\nabla v_{\xi,\e}(\tau_{-y}\w,y)\dy.
	\end{equation*}
	Note that $\overbar h_{\xi,\e}$ is well defined and measurable due to the joint measurability of $\nabla v_{\xi,\e}$ and the joint measurability of the group action. To see that it is integrable, we can use Fubini's theorem and a change of variables in $\Omega$ to deduce that
	\begin{align*}
		\int_{\Omega}|\xi+\overbar h_{\xi,\e}(\w)|\,\mathrm{d}\mu&\leq \dashint_{Q/\e}\int_{\Omega}|\xi+\nabla v_{\xi,\e}(\tau_{-y}\w,y)|\,\mathrm{d}\mu\dy=\dashint_{Q/\e}\int_{\Omega}|\xi+\nabla v_{\xi,\e}(\w,y)|\,\mathrm{d}\mu\dy
		\\
		&=\int_{\Omega}\dashint_{Q/\e}|\xi+\nabla v_{\xi,\e}(\w,y)|\dy\,\mathrm{d}\mu\leq C\int_{\Omega}\dashint_{Q/\e}W(\w,y,\xi+\nabla v_{\xi,\e}(\w,y))\dy\,\mathrm{d}\mu+C,
	\end{align*}
	where we used the superlinearity \eqref{eq:quantitativelineargrowth} of $W$ for a.e. $x\in\R^d$ (cf. Remark \ref{r.stationary_extension}). The last term is finite, since for a.e. $\w\in\Omega$ the function $\nabla v_{\xi,\e}(\w,\cdot)$ is the gradient of an energy minimizer on $Q/\e$. We argue that $\overbar h_{\xi,\e}\in (F_{\rm pot}^1)^m$. Since for a.e. $\w\in\Omega$ the function $\nabla v_{\xi,\e}$ is the weak gradient of $u_{\xi,\e}(\w)\in W^{1,1}_0(Q/\e)^m$, it follows from Fubini's theorem and a change of variables in $\Omega$ that
	\begin{equation*}
		\int_{\Omega}\overbar h_{\xi,\e}(\w)\,\mathrm{d}\mu=\int_{\Omega}\underbrace{\dashint_{Q/\e}\nabla v_{\xi,\e}(\w,y)\dy}_{=0 \text{ almost surely}}\,\mathrm{d}\mu=0.
	\end{equation*}
	Hence, it suffices to show that all rows of $x\mapsto \overbar h_{\xi,\e}(\tau _x\w)$ satisfy the curl-free condition of Definition~\ref{eq:def_F_pot}. To this end, we derive a suitable formula for the distributional derivative of this map. Fix $\theta\in C^{\infty}_c(\R^d)$ and an index $1\leq j\leq d$. Since $\nabla v_{\xi,\e}(\w,\cdot)=0$ on $\R^d\setminus (Q/\e)$, we can write 
	\begin{align*}
		\int_{\R^d}	\overbar h_{\xi,\e}(\tau_x\w)\partial_j\theta(x)\dx&=\int_{\R^d}\int_{\R^d}\frac{\nabla v_{\xi,\e}(\tau_{x-y}\w,y)}{|Q/\e|}\partial_j\theta(x)\dy\dx=\int_{\R^d}\int_{\R^d}\frac{\nabla v_{\xi,\e}(\tau_{z}\w,x-z)}{|Q/\e|}\partial_j\theta(x)\,\mathrm{d}z\dx
		\\
		&=\int_{\R^d}\int_{\R^d}\frac{\nabla v_{\xi,\e}(\tau_{z}\w,y)}{|Q/\e|}\partial_j\theta(y+z)\dy\,\mathrm{d}z.
	\end{align*}
	In order to conclude that $\overbar h_{\xi,\e}\in (F_{\rm pot}^1)^m$, it suffices to note that for a.e. $\w\in\Omega$ and almost every $z\in\R^d$ the function $y\mapsto \nabla v_{\xi,\e}(\tau_z\w,y)$ is the gradient of the Sobolev function $u_{\xi,\e}(\tau_z\w)\in W^{1,1}_0(Q/\e)^m$, so that the curl-free conditions follows. Now we can conclude the proof. Since $\overbar h_{\xi,\e}\in (F_{\rm pot}^1)^m$, it follows from Jensen's inequality that
	\begin{align*}
		W_{\rm hom}(\xi)&\leq \mathbb{E}[\overbar W(\cdot,\xi+\overbar h_{\xi,\e})]\leq \int_{\Omega}\dashint_{Q/\e}\overbar W(\w,\xi+\nabla  v_{\xi,\e}(\tau_{-y}\w,y))\dy\,\mathrm{d}\mu
		\\
		&=\int_{\Omega}\dashint_{Q/\e}\overbar W(\tau_{y}\w,\xi+\nabla  v_{\xi,\e}(\w,y))\dy\,\mathrm{d}\mu=\int_{\Omega}\dashint_{Q/\e}W(\w,y,\xi+\nabla  v_{\xi,\e}(\w,y))\dy\,\mathrm{d}\mu
		\\
		&=\frac{1}{|Q/\e|}\mathbb{E}[\mu_{\xi}(\w,Q/\e)],
	\end{align*}
	where we used the stationarity of $W$, and that $\nabla v_{\xi,\e}(\w,\cdot)=\nabla u_{\xi,\e}(\w)$ almost surely in the last step. Passing to the limit as $\e\to 0$, we obtain from $L^1$-convergence in the subadditive ergodic theorem (see \cite[Theorem 2.3, p. 203]{Krengel}) the missing inequality
	\begin{equation*}
		W_{\rm hom}(\xi)\leq \mu_{\rm hom}(\xi).
	\end{equation*}
\end{proof}
Next, we prove that $\lim_{k\to +\infty}W_{{\rm hom},k}=W_{\rm hom}$, which allows us to prove the $\Gamma$-liminf inequality by truncation.
\begin{lemma}\label{l.klimit}
Assume that $W$ satisfies \ref{(A1)}, \ref{(A2)} and \ref{(A3)}. For $k\in\N$ let $W_k$ be the integrand given by Lemma \ref{l.truncateW} and denote by $W_{{\rm hom},k}$ and $W_{\rm hom}$ the functions given by Lemma \ref{l.defW_hom} applied to $W_k$ and $W$, respectively. Then for all $\xi\in\R^{m\times d}$ it holds that
	\begin{equation*}
		\lim_{k\to +\infty}W_{{\rm hom},k}(\xi)=W_{\rm hom}(\xi).
	\end{equation*}
\end{lemma}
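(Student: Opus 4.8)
The plan is to work directly with the corrector formula for $W_{{\rm hom},k}$ and $W_{\rm hom}$ from Lemma~\ref{l.defW_hom} (rather than with the multi-cell formula, which is the point of having proved Lemma~\ref{l.F_pot_formula}), since the infimum over $(F^1_{\rm pot})^m$ behaves well under monotone limits of the integrand. First I would note that $W_k\uparrow W$ pointwise by Lemma~\ref{l.truncateW}~a), so monotonicity carries over to the infima and $W_{{\rm hom},k}(\xi)$ is nondecreasing in $k$ with $W_{{\rm hom},k}(\xi)\leq W_{\rm hom}(\xi)<+\infty$. Hence the limit $L(\xi):=\lim_{k\to+\infty}W_{{\rm hom},k}(\xi)$ exists and satisfies $L(\xi)\leq W_{\rm hom}(\xi)$, so only the reverse inequality is at stake.

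For the lower bound I would pick, for each $k$, a minimizer $\overbar h_k\in(F^1_{\rm pot})^m$ of $\overbar h\mapsto\mathbb{E}[\overbar W_k(\cdot,\xi+\overbar h)]$ (these exist by the direct-method argument in Lemma~\ref{l.defW_hom} applied to $W_k$, which still satisfies \ref{(A1)}--\ref{(A3)} by Lemma~\ref{l.truncateW}), so that $\mathbb{E}[\overbar W_k(\cdot,\xi+\overbar h_k)]=W_{{\rm hom},k}(\xi)\leq L(\xi)$ uniformly in $k$. The key structural point is that all $\overbar W_k$ share the common minorant $\overbar W_k\geq\widetilde\ell$ built in Lemma~\ref{l.truncateW}, where $\widetilde\ell\geq 0$, $0\leq\widetilde\ell(\cdot,0)\leq\overbar W(\cdot,0)\in L^1(\Omega)$, and crucially $\sup_{|\eta|\leq r}(\widetilde\ell)^*(\cdot,\eta)\in L^1(\Omega)$ for every $r>0$ — this integrability was already established inside the proof of Lemma~\ref{l.truncateW}. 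Thus $\mathbb{E}[\widetilde\ell(\cdot,\xi+\overbar h_k)]\leq L(\xi)$ uniformly, and running the Fenchel--Young argument from the proof of Lemma~\ref{l.compactness} on $\Omega$ in place of the physical space (exactly as in the proof of Lemma~\ref{l.defW_hom}, with the oscillating argument replaced by $0$) shows that $(\overbar h_k)_k$ is equi-integrable and bounded in $L^1(\Omega)^{m\times d}$, hence weakly relatively compact by Dunford--Pettis. Since $(F^1_{\rm pot})^m$ is weakly closed by Lemma~\ref{l.F_pot}, along a subsequence (not relabeled) $\overbar h_k\rightharpoonup\overbar h_\infty\in(F^1_{\rm pot})^m$ weakly in $L^1$.

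It then remains to pass to the limit in two stages. Fixing $j\in\N$, I would use $\overbar W_j\leq\overbar W_k$ for $k\geq j$ to get $\mathbb{E}[\overbar W_j(\cdot,\xi+\overbar h_k)]\leq\mathbb{E}[\overbar W_k(\cdot,\xi+\overbar h_k)]=W_{{\rm hom},k}(\xi)$, hence $\liminf_{k\to+\infty}\mathbb{E}[\overbar W_j(\cdot,\xi+\overbar h_k)]\leq L(\xi)$; since $\overbar W_j(\w,\cdot)$ is finite and convex, so continuous, and nonnegative, the integral functional $h\mapsto\mathbb{E}[\overbar W_j(\cdot,\xi+h)]$ is convex and strongly lower semicontinuous on $L^1(\Omega)^{m\times d}$ (Fatou), hence weakly lower semicontinuous, and therefore $\mathbb{E}[\overbar W_j(\cdot,\xi+\overbar h_\infty)]\leq L(\xi)$. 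Letting $j\to+\infty$ and using monotone convergence together with $\overbar W_j\uparrow\overbar W$ gives $\mathbb{E}[\overbar W(\cdot,\xi+\overbar h_\infty)]\leq L(\xi)$; since $\overbar h_\infty\in(F^1_{\rm pot})^m$ is admissible in the formula for $W_{\rm hom}(\xi)$, this yields $W_{\rm hom}(\xi)\leq L(\xi)$, completing the proof. I expect the only genuine difficulty to be the uniform-in-$k$ equi-integrability of the minimizers $\overbar h_k$: it is precisely what forces one to exhibit a single superlinear (in the integrable sense) minorant valid for all $k$, namely $\widetilde\ell$. Once that is available, the remaining interchange of limits — first $k\to\infty$ (weak limit and lower semicontinuity), then $j\to\infty$ (monotone convergence) — is routine.
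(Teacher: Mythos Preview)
Your proof is correct and follows essentially the same route as the paper: both pick minimizers $\overbar h_k\in(F^1_{\rm pot})^m$ for $W_{{\rm hom},k}(\xi)$, use a uniform superlinear lower bound (you spell this out via $\widetilde\ell$ from Lemma~\ref{l.truncateW}, the paper just says ``uniform superlinearity of $W_k$ (recall the monotonicity in $k$)'') to extract a weak $L^1$-limit $\overbar h_\infty\in(F^1_{\rm pot})^m$, and then pass to the limit in two stages---first $k\to\infty$ using weak lower semicontinuity for fixed $j$, then $j\to\infty$ by monotone convergence. Your version is slightly more streamlined in that you bound $\mathbb{E}[\overbar W(\cdot,\xi+\overbar h_\infty)]\leq L(\xi)$ directly rather than first showing $\overbar h_\infty$ is a minimizer for $W$ by comparison with an arbitrary $h$, but the content is identical.
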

\begin{proof}
Since $W_k\leq W$, we clearly have $W_{{\rm hom},k}(\xi)\leq W_{\rm hom}$. For the reverse inequality, note that due to monotonicity it suffices to prove the claim up to subsequences. Let $\overbar h_{\xi,k}\in (F_{\rm pot}^1)^m$ be a minimizer defining $W_{{\rm hom},k}(\xi)=\mathbb{E}[W_k(\cdot,0,\xi+\overbar h_{\xi,k})]$. Due to the uniform superlinearity of $W_k$ (recall the monotonicity in $k$), we know that (up to a subsequence) $\overbar h_{\xi,k}\rightharpoonup \widetilde{h}\in (F_{\rm pot}^1)^m$ in $L^1(\Omega)^{m\times d}$. Fix $h\in (F_{\rm pot}^1)^m$. Then due to monotone convergence and lower semicontinuity, for every $m\in\N$ we have
\begin{align*}
\mathbb{E}[W(\cdot,0,\xi+h)]&= \lim_{k\to +\infty}\mathbb{E}[W_k(\cdot,0,\xi+h)]\geq \lim_{k\to +\infty}\mathbb{E}[W_k(\cdot,0,\xi+\overbar h_{\xi,k})]
\\
&\geq \liminf_{k\to +\infty}\mathbb{E}[W_m(\cdot,0,\xi+\overbar h_{\xi,k})]\geq \mathbb{E}[W_m(\cdot,0,\xi+\widetilde{h})].
\end{align*}
Letting $m\to +\infty$, we find that $\mathbb{E}[W(\cdot,0,\xi+h)]\geq \mathbb{E}[W(\cdot,0,\xi+\widetilde{h})]$. Hence $\widetilde{h}$ is a minimizer and, as proven above 
\begin{equation*}
W_{\rm hom}(\xi)=\mathbb{E}[W(\cdot,0,\xi+\widetilde{h})]\leq \lim_{k\to +\infty}\mathbb{E}[W_k(\cdot,0,\xi+\overbar h_{\xi,k})]=\lim_{k\to +\infty}W_{{\rm hom},k}(\xi).	
\end{equation*}
\end{proof}
Now we are in a position to prove the $\Gamma$-liminf inequality for the original functionals.
\begin{proposition}\label{p.lb}
Assume that $W$ satisfies \ref{(A1)}, \ref{(A2)} and \ref{(A3)}. Let $D\subset\R^d$ be a bounded, open set, $u\in W^{1,1}(D)^m$ and let $(u_\e)_\e\subset L^1(D)^m$ be a sequence satisfying $u_{\e}\to u$ in $L^1(D)^m$ as $\e\to 0$. Then
	\begin{equation*}
		\int_D W_{\rm hom}(\nabla u(x))\dx\leq\liminf_{\e\to 0}\int_D W(\w,\tfrac{x}{\e},\nabla u_{\e}(x))\dx,	
	\end{equation*}
with $W_{\rm hom}$ defined in Lemma \ref{l.defW_hom}.
\end{proposition}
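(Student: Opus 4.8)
The plan is to assemble the statement directly from the preliminary results already at hand, as the substantial work has been carried out in Lemma~\ref{l.truncateW}, Proposition~\ref{p.lb_k}, Lemma~\ref{l.F_pot_formula} and Lemma~\ref{l.klimit}. First I would reduce to the case where the limit inferior on the right-hand side is finite, since otherwise there is nothing to prove. Then, for a fixed truncation index $k\in\N$, I use that the integrand $W_k$ provided by Lemma~\ref{l.truncateW} satisfies $W_k\leq W$ pointwise, so that
\begin{equation*}
	\liminf_{\e\to 0}\int_D W(\w,\tfrac{x}{\e},\nabla u_{\e}(x))\dx\ \geq\ \liminf_{\e\to 0}\int_D W_k(\w,\tfrac{x}{\e},\nabla u_{\e}(x))\dx.
\end{equation*}

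Since $u_{\e}\to u$ in $L^1(D)^m$ by hypothesis and $W_k$ satisfies \ref{(A1)}, \ref{(A2)}, \ref{(A3)} together with the polynomial upper bound from Lemma~\ref{l.truncateW}~b), Proposition~\ref{p.lb_k} applied to $W_k$ yields $\liminf_{\e\to 0}\int_D W_k(\w,\tfrac{x}{\e},\nabla u_{\e})\dx\geq\int_D\mu_{{\rm hom},k}(\nabla u)\dx$. By Lemma~\ref{l.F_pot_formula} we have $\mu_{{\rm hom},k}=W_{{\rm hom},k}$ on all of $\R^{m\times d}$, so that combining the two displays gives, for every $k\in\N$,
\begin{equation*}
	\liminf_{\e\to 0}\int_D W(\w,\tfrac{x}{\e},\nabla u_{\e}(x))\dx\ \geq\ \int_D W_{{\rm hom},k}(\nabla u(x))\dx.
\end{equation*}

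Finally I would let $k\to+\infty$. Because $W_k$ is increasing in $k$, so is $W_{{\rm hom},k}$ (each is the infimum over the same corrector class of a larger integrand), and Lemma~\ref{l.klimit} gives $W_{{\rm hom},k}(\nabla u(x))\uparrow W_{\rm hom}(\nabla u(x))$ for a.e.\ $x\in D$. The monotone convergence theorem then yields $\int_D W_{{\rm hom},k}(\nabla u)\dx\to\int_D W_{\rm hom}(\nabla u)\dx$, which together with the previous display proves the claim.

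The only point requiring care — and it is bookkeeping rather than a genuine obstacle — is that Proposition~\ref{p.lb_k}, Lemma~\ref{l.F_pot_formula} and Lemma~\ref{l.klimit} each hold only outside a null set of $\Omega$, and here they are applied both to $W$ and to the countable family $\{W_k\}_{k\in\N}$; one therefore fixes $\w$ in the common set of full probability singled out at the beginning of Section~\ref{s.proofs}. No new analytic difficulty arises at this stage: the $p$–$q$-growth of the $W_k$ was exactly what made the blow-up argument in Proposition~\ref{p.lb_k} run, and the passage to the supremum in $k$ has already been performed at the level of the homogenized integrands.
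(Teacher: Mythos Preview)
Your proposal is correct and matches the paper's own proof essentially line for line: bound $W$ below by $W_k$, apply Proposition~\ref{p.lb_k}, identify $\mu_{{\rm hom},k}=W_{{\rm hom},k}$ via Lemma~\ref{l.F_pot_formula}, and pass to the limit in $k$ by Lemma~\ref{l.klimit} and monotone convergence. Your remark on fixing $\w$ in the common full-probability set is exactly what the paper stipulates at the start of Section~\ref{s.proofs}.
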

\begin{proof}
Since $W_k\leq W$ for all $k\in\N$, it follows from the liminf inequality proven in Proposition \ref{p.lb_k} that
\begin{eqnarray*}
	\liminf_{\e\to 0}\int_D W(\w,\tfrac{x}{\e},\nabla u_{\e}(x))\dx&\geq& 	\liminf_{\e\to 0}\int_D W_k(\w,\tfrac{x}{\e},\nabla u_{\e}(x))\dx\geq\int_D\mu_{{\rm hom},k}(\nabla u(x))\dx
	\\
	&\overset{\text{Lemma } \ref{l.F_pot_formula}}{=}&\int_D W_{{\rm hom},k}(\nabla u(x))\dx.
\end{eqnarray*}
Letting $k\to +\infty$, the claim follows from Lemma \ref{l.klimit} and the monotone convergence theorem.
\end{proof}

\subsection{Construction of a recovery sequence} In Proposition~\ref{p.lb} we proved the $\Gamma$-$\liminf$ inequality assuming only \ref{(A1)}, \ref{(A2)} and \ref{(A3)}. In the proof of the $\Gamma$-$\limsup$ inequality, see Proposition~\ref{p.recoverysequence} below, we need to assume in addition either the mild monotonicity condition \ref{(A4)} or the coercivity condition \ref{(A5)}. The main technical part in the proof of Proposition~\ref{p.recoverysequence} is to construct a recovery sequence for affine functions that satisfy prescribed affine boundary values. In order to attain the boundary condition, we introduce a cut-off that causes an additional error term. Assuming the monotonicity condition \ref{(A4)}, we  combine the cut-off with a truncation argument to control the additional error. Without a structure assumption such as \ref{(A4)}, truncation will not work and we rely on Sobolev embedding instead. Similar arguments were used e.g.\ in \cite{DG_unbounded,Mue87} assuming \ref{(A5)} with $p>d$, exploiting the compact embedding of $W^{1,p}$ into $L^\infty$. The improvement from $p>d$ to $p>d-1$ comes from suitably chosen cut-off functions in combination with the compact embedding of $W^{1,p}(S_1)\subset L^\infty(S_1)$, where $S_1$ denotes the $d-1$-dimensional unit sphere, provided $p>d-1$. The following lemma encodes the needed compactness property.
\begin{lemma}\label{L:optim}
Let $N\in\mathbb N$ and $p>d-1$. For every $\rho>0$ there exists $C_{\rho,N}<\infty$ (depending also on $d$ and $m$) such that the following is true: 

For any ball $B_R=B_R(x_0)$, any $u_1,\dots,u_N\in W^{1,p}(B_R)^m$ and $\delta\in(0,\frac12]$ there exists $\eta\in W_0^{1,\infty}(B_R)$ satisfying
\begin{equation}\label{L:optim:eta}
0\leq \eta\leq 1,\quad \eta=1\quad\mbox{in $B_{(1-\delta)R}$},\quad\|\nabla \eta\|_{L^\infty(B_R)}\leq \frac{2}{\delta R}
\end{equation}
and for all $i\in\{1,\ldots,N\}$
\begin{equation}\label{L:optim:claim}
\|\nabla \eta \otimes u_i\|_{L^\infty(B_R)}\leq\frac{\rho}{\delta}\biggl(\frac1{\delta R^d}\int_{B_R\setminus B_{(1-\delta) R}}|\nabla u_i|^p\dx\biggr)^\frac1p+\frac{C_{\rho,N}}{\delta R}\biggl(\frac1{\delta R^d}\int_{B_R\setminus B_{(1-\delta) R}}| u_i|^p\dx\biggr)^\frac1p.
\end{equation}
\end{lemma}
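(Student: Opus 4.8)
The plan is to reduce everything to a scaling-invariant statement on a fixed annulus and then exploit the trace/Sobolev embedding on spheres. First I would normalise: by translating we may take $x_0=0$, and by the change of variables $x\mapsto Rx$ (which turns $\eta$ into $\tilde\eta(x)=\eta(Rx)$ and scales the integrals in the claimed way) it suffices to treat the case $R=1$. So the goal becomes: for every $\delta\in(0,\tfrac12]$ and $u_1,\dots,u_N\in W^{1,p}(B_1)^m$, find $\eta\in W_0^{1,\infty}(B_1)$ with $0\le\eta\le1$, $\eta\equiv1$ on $B_{1-\delta}$, $\|\nabla\eta\|_\infty\le 2/\delta$, and
\begin{equation*}
\|\nabla\eta\otimes u_i\|_{L^\infty(B_1)}\le\frac{\rho}{\delta}\Bigl(\frac1{\delta}\int_{B_1\setminus B_{1-\delta}}|\nabla u_i|^p\dx\Bigr)^{1/p}+\frac{C_{\rho,N}}{\delta}\Bigl(\frac1{\delta}\int_{B_1\setminus B_{1-\delta}}|u_i|^p\dx\Bigr)^{1/p}.
\end{equation*}
The key point is that $\nabla\eta$ will be supported in the thin annulus $A_\delta:=B_1\setminus B_{1-\delta}$ and, crucially, will be chosen \emph{radial}, $\eta(x)=\psi(|x|)$, so that $\nabla\eta(x)=\psi'(|x|)\tfrac{x}{|x|}$ and the whole problem is about controlling $\psi'(r)\,u_i$ for $r\in(1-\delta,1)$.

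Next I would foliate $A_\delta$ by spheres $S_r=\partial B_r$, $r\in(1-\delta,1)$. On each such sphere (diffeomorphic to $S_1$ with uniformly bounded geometry for $r\ge1/2$) the Sobolev embedding $W^{1,p}(S_1)\hookrightarrow C^0(S_1)$ holds precisely because $p>d-1=\dim S_1$, with a \emph{compact} embedding; equivalently, for every $\rho'>0$ there is $C_{\rho'}$ with $\|v\|_{L^\infty(S_1)}\le \rho'\|\nabla_\tau v\|_{L^p(S_1)}+C_{\rho'}\|v\|_{L^p(S_1)}$ (an interpolation/Ehrling-type inequality, obtained from the compact embedding together with $W^{1,p}\hookrightarrow L^p$ being the identity). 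Applying this to $v=u_i|_{S_r}$ and using $|\nabla_\tau u_i|\le|\nabla u_i|$ on the sphere gives, for a.e.\ $r$,
\begin{equation*}
\|u_i\|_{L^\infty(S_r)}\le \rho'\,\|\nabla u_i\|_{L^p(S_r)}+C_{\rho'}\,\|u_i\|_{L^p(S_r)}.
\end{equation*}
Now the idea is to pick the radius $r_i^\ast\in(1-\delta,1)$ (one for each $i$, then combine) where the right-hand side, in an averaged/$L^p$-in-$r$ sense, is smallest. Concretely, by the mean-value/Chebyshev argument, there is a set of radii of measure at least $\delta/2$ on which $\|\nabla u_i\|_{L^p(S_r)}^p\le \tfrac{2}{\delta}\int_{A_\delta}|\nabla u_i|^p\dx$ and similarly for $u_i$; intersecting these $2N$ good sets (each of measure $\ge(1-\tfrac1{2N})\delta$ if one tunes the constants, or simply by a union bound choosing the threshold $\tfrac{2N}{\delta}$) leaves a non-empty set of good radii. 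Pick one such radius $r^\ast$ common to all $i$; then on $S_{r^\ast}$,
\begin{equation*}
\|u_i\|_{L^\infty(S_{r^\ast})}\le \rho'\Bigl(\tfrac{2N}{\delta}\int_{A_\delta}|\nabla u_i|^p\dx\Bigr)^{1/p}+C_{\rho'}\Bigl(\tfrac{2N}{\delta}\int_{A_\delta}|u_i|^p\dx\Bigr)^{1/p}.
\end{equation*}

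Finally I would build $\psi$ so that $\psi\equiv1$ on $[0,r^\ast]$ and decreases to $0$ on $[r^\ast,1]$ with $|\psi'|\le \tfrac{2}{\delta}$ (possible since $r^\ast\ge 1-\delta$, so the interval $[r^\ast,1]$ has length $\le\delta$, and actually one can spread the drop over an interval of length $\ge\delta/2$ to keep $|\psi'|\le 2/\delta$ — here one should take the common good set to have measure $\ge\delta/2$ and let $\psi$ drop across the \emph{whole} segment from $1-\delta$ to $1$ while being constant $=1$ up to $r^\ast$ and having its support of $\psi'$ inside $A_\delta$). Since $\nabla\eta$ is supported where $\psi'\ne0$, which lies in the annulus and where moreover $|x|\le 1$, and since $\|\nabla\eta\otimes u_i\|_{L^\infty}\le \|\psi'\|_\infty\,\sup_{r\in\mathrm{supp}\,\psi'}\|u_i\|_{L^\infty(S_r)}$... — this last step needs a small fix, because a pointwise bound on $u_i$ on \emph{every} sphere in the support is not available, only on the single sphere $S_{r^\ast}$. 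The remedy, and the one genuine subtlety, is therefore to make $\psi'$ concentrate: choose $\psi$ to be constant $=1$ on $[0,r^\ast]$, drop linearly on a \emph{short} sub-interval $[r^\ast,r^\ast+h]$ with $h$ chosen so that $1/h\le 2/\delta$, i.e.\ $h\ge\delta/2$, and constant $=0$ afterwards; but then $\nabla\eta$ is supported on the annulus $B_{r^\ast+h}\setminus B_{r^\ast}$, which is \emph{not} a single sphere. So instead one uses the radial structure differently: write $u_i(x)=u_i(r^\ast\omega)+\int_{r^\ast}^{|x|}\partial_r u_i(s\omega)\,ds$ for $x=|x|\omega$, estimate the second term in $L^\infty$ via a $1$-d Sobolev inequality on the segment (using $p>1$) by $\|\partial_r u_i\|_{L^p}$ over the short radial segment, and bound the first term by the sphere estimate at $r^\ast$. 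Combining, with $\psi$ dropping over a segment of length comparable to $\delta$, gives exactly the claimed inequality with $\rho=2N^{1/p}\rho'$ times an absolute constant (so choose $\rho'=\rho/(C N^{1/p})$) and $C_{\rho,N}$ depending on $\rho,N,d,m$. Undoing the rescaling $x\mapsto x/R$ reinserts all the powers of $R$ and $\delta$ in the stated positions.

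\medskip

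The main obstacle, as flagged above, is that the sphere-embedding gives a pointwise bound on $u_i$ on \emph{one} well-chosen sphere only, whereas $\nabla\eta$ is necessarily supported on a full (thin) annulus; bridging this requires combining the tangential (sphere) estimate at the good radius with a one-dimensional radial Sobolev estimate across the annulus, and carefully checking that the radial part contributes only a term of the same structure (controlled by $\|\nabla u_i\|_{L^p(A_\delta)}$ with the right $\delta$-powers). Everything else — the scaling reduction, the Chebyshev choice of good radii, the union bound over the $N$ functions, and the explicit construction of the radial cut-off with $\|\nabla\eta\|_\infty\le 2/(\delta R)$ — is routine bookkeeping.
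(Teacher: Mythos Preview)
Your framework is the same as the paper's --- normalisation to $R=1$, foliation by spheres, the Ehrling-type interpolation $\|v\|_{L^\infty(S_1)}\le\rho'\|D_\tau v\|_{L^p(S_1)}+C_{\rho'}\|v\|_{L^p(S_1)}$ valid for $p>d-1$, and a Chebyshev/union-bound selection of good radii. The gap is exactly at the point you flag as the ``one genuine subtlety'': your radial-bridging remedy does not work. Writing $u_i(r\omega)=u_i(r^\ast\omega)+\int_{r^\ast}^{r}\partial_s u_i(s\omega)\,ds$ and applying H\"older in $s$ gives a bound for \emph{fixed} $\omega$; taking $\sup_\omega$ of the right-hand side produces $\sup_\omega\bigl(\int_{r^\ast}^{r^\ast+h}|\partial_r u_i(s\omega)|^p\,ds\bigr)^{1/p}$, which is \emph{not} controlled by $\|\nabla u_i\|_{L^p(A_\delta)}$. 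More conceptually, any argument that bounds $\|u_i\|_{L^\infty}$ on a full sub-annulus of width $\sim\delta$ by $\|u_i\|_{W^{1,p}(A_\delta)}$ amounts to a Sobolev embedding $W^{1,p}\hookrightarrow L^\infty$ in dimension $d$, which fails for $d-1<p\le d$ --- precisely the new range the lemma is meant to cover.

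The paper's fix is to \emph{not} collapse to a single radius. Keep the entire good set $U\subset((1-\delta),1)$ of measure $\ge\delta/2$ on which the spherical integrals of $|\nabla u_i|^p$ and $|u_i|^p$ are simultaneously controlled for all $i$, and set
\[
\eta(x)=\tilde\eta(|x|),\qquad \tilde\eta(r)=\frac{1}{|U|}\int_r^1\chi_U(s)\,ds.
\]
Then $0\le\eta\le1$, $\eta=1$ on $B_{1-\delta}$, $|\tilde\eta'|=|U|^{-1}\chi_U\le 2/\delta$, and crucially $\nabla\eta$ is supported \emph{only} on spheres $S_r$ with $r\in U$, where the Ehrling estimate applies with the Chebyshev-controlled right-hand side. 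No radial bridging is needed. The paper carries this out first for $u_i\in C^1$ (so that spherical restrictions make classical sense) and then passes to general $W^{1,p}$ by approximating, extracting a weak-$\ast$ limit of the resulting cut-offs, and using lower semicontinuity --- a step your outline also omits.
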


\begin{proof}

Without loss of generality, we suppose $x_0=0$. Set $S_1:=\{x\in \R^d\,:\,|x|=1\}$.

\smallskip

{\bf Step 1} We prove the statement for $u_1,\dots,u_N\in C^1(B_R)^m$. For $i\in\{1,\dots,N\}$ and $C:=4N$, we set
\begin{equation}\label{def:U}
U_i:=\biggl\{r\in[(1-\delta) R,R]\,:\,\int_{S_1}|\nabla u_i(rz)|^p\,d\mathcal H^{d-1}(z)\leq \frac{C}{\delta(1-\delta)^{d-1}R^d}\int_{B_R\setminus B_{(1-\delta) R}}|\nabla u_i|^p\dx\biggr\}.
\end{equation}
An elementary application of Fubinis Theorem and the definition of $U_i$ in the form
\begin{align*}
\int_{B_R \setminus B_{(1-\delta) R}}|\nabla u(x)|^p\dx=&\int_{(1-\delta) R}^Rr^{d-1}\int_{S_1}|\nabla u(r z)|^p\,d\mathcal H^{d-1}(z)\,dr\\
\geq&((1-\delta) R)^{d-1}\int_{((1-\delta) R,R)\setminus U_i}\int_{S_1}|\nabla u(rz)|^p\,d\mathcal H^{d-1}(z)\,dr\\
>&\frac{C(\delta R-|U_i|)}{\delta R}\int_{B_R \setminus B_{(1-\delta) R}}|\nabla u(x)|^p\dx
\end{align*}
imply $|U_i|\geq (1-\frac1C)\delta R$, or equivalently $|(1-\delta R,R)\setminus U_i|\leq \frac{\delta R}C$. An analogous computation yields that
\begin{equation}\label{def:U2}
V_i:=\biggl\{r\in[(1-\delta) R,R]\,:\,\int_{S_1}|u_i(rz)|^p\,d\mathcal H^{d-1}(z)\leq \frac{C}{\delta(1-\delta)^{d-1}R^d}\int_{B_R\setminus B_{(1-\delta) R}}|u_i|^p\dx\biggr\}
\end{equation}
satisfies $|V_i|\geq (1-\frac1C)\delta R$, or equivalently $|(1-\delta R,R)\setminus V_i|\leq \frac{\delta R}C$. Setting $U:=\bigcap_{i=1}^NU_i\cap V_i$, from the choice $C=4N$ we obtain
\begin{equation}\label{bound:lowerU}
|U|\geq \delta R-\frac{2N}C\delta R=\frac{\delta R}2. 
\end{equation}
Next, we define $\eta\in W^{1,\infty}(B_R;[0,1])$ by
$$
\eta(x)=\tilde \eta(|x|),\quad\mbox{where}\quad \tilde \eta(r)=\begin{cases}1&\mbox{if $r\in(0,(1-\delta) R)$,}\\\displaystyle \frac{1}{|U|}\int_{r}^R\chi_{U}(s)\,ds&\mbox{if $r\in ((1-\delta) R,R)$.}\end{cases}
$$
By definition, we have that $0\leq \eta\leq1$, $\eta=1$ in $B_{(1-\delta) R}$, $\eta\in W_0^{1,\infty}(B_R)$ and for $x=rz$ with $r\in[0,R]$ and $z\in S_1$
\begin{equation}
|\nabla \eta(rz)|=\begin{cases}0&\mbox{if $r\notin U$,}\\\frac1{|U|}&\mbox{if $r\in U$.}
\end{cases}
\end{equation}
Hence, recalling \eqref{bound:lowerU}, the map $\eta$ satisfies all the properties in \eqref{L:optim:eta}. 

Next, we use $p>d-1\geq1$ in the form that the embedding $W^{1,p}(S_1)^m\subset L^\infty(S_1)^m$ is compact. In particular, for every $\rho>0$ there exists $C_\rho$ such that for all $v\in C^1(S_1)^m$ it holds
\begin{align*}
\sup_{S_1} |v|\leq \rho \|D_\tau v\|_{L^p(S_1)}+C_\delta\|v\|_{L^p(S_1)}
\end{align*} 
where $D_\tau$ denotes the tangential derivative (see \cite[Lemma~5.1]{Lionsbook}). Applying the above estimate to $v_r\in C^1(S_1,\mathbb R^m)$ defined by $v_r(z):=u(r z)$ for all $z\in S^1$ with $u\in C_1(B_R,\R^m)$, we obtain with the chain rule
\begin{align}
\sup_{z\in S_1} |u(rz)|\leq \rho r \|\nabla u(r\cdot) \|_{L^p(S_1)}+C_\rho\|u(r\cdot)\|_{L^p(S_1)}.\label{compact:embedding1}
\end{align} 
Hence, for every $\rho>0$ there exists $C_\rho<\infty$ such that for all $x=rz$ with $r=|x|$ and $z=\frac{x}{|x|}$ 
\begin{align*}
|(\nabla \eta\otimes u_i)(x)|=&|(\nabla \eta\otimes u_i)(r z)|\leq\frac1{|U|}\sup_{r\in U}|u_i(rz)|\\
\leq&\frac{2}{\delta R}  \sup_{r\in U} \biggl(r\rho \biggl(\int_{S_1}|\nabla u_i(r z)|^p\,d\mathcal H^{d-1}(z)\biggr)^\frac1p+C_\rho \biggl(\int_{S_1}| u_i(r z)|^p\,d\mathcal H^{d-1}(z)\biggr)^\frac1p\biggr)\\
\leq&\frac{2}{\delta R}  \sup_{r\in U} \biggl(R\rho \biggl(\frac{2^{d+1}N}{\delta R^d}\int_{B_R\setminus B_{(1-\delta)R}}|\nabla u|^p\dx\biggr)^\frac1p+C_\rho \biggl(\frac{2^{d+1}N}{\delta R^d}\int_{B_R\setminus B_{(1-\delta) R}}|\nabla u|^p\dx\biggr)^\frac1p\biggr),
\end{align*}
where we use the definition of $U$ and $1-\delta\geq \frac12$ in the last inequality. The claimed estimate \eqref{L:optim:claim} follows by redefining the choice of $\rho>0$ (depending on $N$).

\smallskip

{\bf Step 2} Conclusion. Consider $u_1,\dots,u_N\in W^{1,p}(B_R)^m$. By standard density results, we find $(u_{i,j})_j\subset C^\infty(B_R)^m$ such that $u_{i,j}\to u_i$ in $W^{1,p}(B_R)^m$. By Step~2, we find for every $j\in\mathbb N$ a cut-off function $\eta_j\in W_0^{1,\infty}(B_R)$ satisfying 
\begin{align}\label{L:optim:eta1}
&0\leq \eta_j\leq 1,\quad \eta_j=1\quad\mbox{in $B_{(1-\delta)R}$},\quad\|\nabla \eta_j\|_{L^\infty(B_R)}\leq \frac{2}{\delta R},\\
&\|\nabla \eta_j \otimes u_{i,j}\|_{L^\infty(B_R)}\leq\frac{\rho}{\delta}\biggl(\frac1{\delta R^d}\int_{B_R\setminus B_{(1-\delta) R}}|\nabla u_{i,j}|^p\dx\biggr)^\frac1p+\frac{C_{\rho,N}}{\delta R}\biggl(\frac1{\delta R^d}\int_{B_R\setminus B_{(1-\delta) R}}| u_{i,j}|^p\dx\biggr)^\frac1p.\label{L:optim:claim1}
\end{align}
In view of the bounds in \eqref{L:optim:eta1} and the Banach-Alaoglu Theorem, there exists $\eta\in W_0^{1,\infty}(B_R)$ such that up to subsequences (not relabeled) $\eta_j\stackrel{\star}{\rightharpoonup}\eta$ in $W^{1,\infty}(B_R)$. Moreover, $\eta$ also satisfies the bounds in \eqref{L:optim:eta}. Since $\nabla \eta_j\stackrel{\star}\rightharpoonup \nabla \eta$ weakly$^*$ in $L^\infty(B_R;\R^d)$ and $u_{i,j}\to u_i$ (strongly) in $L^p(B_R)^m$, we deduce that $\nabla \eta_j\otimes u_{i,j}$ converges weakly in $L^p(B_R)^{m\times d}$ to $\nabla \eta\otimes u_i$ and by the boundedness of the right-hand side in \eqref{L:optim:claim1} also weakly$^*$ in $L^{\infty}(B_R)^{m\times d}$. Hence the claimed estimate \eqref{L:optim:claim} follows from \eqref{L:optim:claim1} and weak$^*$ lower-semicontinuity of the norm.

\end{proof}

Now we are in a position to state and prove the $\Gamma$-$\limsup$ inequality.  
\begin{proposition}\label{p.recoverysequence}
Let $W$ satisfy Assumption~\ref{a.1}. Let $D\subset\R^d$ be a bounded, open set with Lipschitz boundary and $u\in W^{1,1}(D)^m$. Then there exists sequence $(u_\e)_{\e>0}\subset W^{1,1}D)^m$ such that $u_\e\to  u$ in $L^1(D)$ and
\begin{equation}
\lim_{\e\downarrow0} F_\e(\omega,u_\e,D)=\int_D W_{\rm hom}(\nabla  u(x))\,dx.
\end{equation}
 
\end{proposition}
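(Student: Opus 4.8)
The standard scheme for the $\Gamma$-$\limsup$ inequality for convex integrands applies: by general theory (the upper $\Gamma$-limit is lower semicontinuous and $F_{\rm hom}$ is continuous along suitable approximations in energy) it suffices to produce recovery sequences on a dense class of functions. First I would reduce to the case $u=\xi x$ affine on a ball, since by the $L^1$-lower semicontinuity of the left-hand side functional $\Gamma\text{-}\limsup_\e F_\e(\omega,\cdot,D)$ and the standard blow-up/localization argument (together with the continuity of $W_{\rm hom}$ given by convexity and superlinearity), recovery sequences for affine maps on balls can be glued together into recovery sequences for general $u\in W^{1,1}(D)^m$ on any open set; the extension from piecewise affine to $W^{1,1}$-functions uses the approximation-in-energy result of \cite{EkTe} announced in the appendix, exactly as in \cite{Mue87}. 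Here one crucially uses the growth bound $W_{\rm hom}(\xi)\leq \mathbb{E}[\overbar W(\cdot,\xi)]<+\infty$ from Lemma~\ref{l.defW_hom}, so that the limit functional is finite and continuous on a dense set.

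For the affine map $\xi x$ on a ball $B_R=B_R(x_0)$, I would start from the corrector $\phi_\xi$ of Lemma~\ref{l.defW_hom}: the scaled function $v_\e(x)=\xi x+\e\phi_\xi(\omega,x/\e)$ satisfies $v_\e\rightharpoonup \xi x$ in $W^{1,1}$ and, by the ergodic theorem in the equi-integrable form of Lemma~\ref{l.weakL1} applied to the stationary energy density $\overbar W(\cdot,\xi+\overbar h_\xi)\in L^1(\Omega)$, one has $\int_{B_R}W(\omega,x/\e,\nabla v_\e)\dx\to |B_R|\,W_{\rm hom}(\xi)$. This $v_\e$ does not attain the boundary datum $\xi x$, so I would cut off: on the annulus $B_R\setminus B_{(1-\delta)R}$ interpolate between $v_\e$ and $\xi x$ via a cut-off $\eta$, producing the competitor $u_{\e,\delta}=\eta v_\e+(1-\eta)\xi x\in \xi x+W_0^{1,1}(B_R)^m$, whose gradient is $\eta\nabla v_\e+(1-\eta)\xi+\nabla\eta\otimes(v_\e-\xi x)=\eta\nabla v_\e+(1-\eta)\xi+\e\,\nabla\eta\otimes\phi_\xi(\omega,\cdot/\e)$. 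By convexity of $W$ the energy splits into the bulk contribution on $B_{(1-\delta)R}$ (which tends to $|B_{(1-\delta)R}|W_{\rm hom}(\xi)\to |B_R|W_{\rm hom}(\xi)$ as first $\e\to0$ then $\delta\to0$), the contribution $\int_{B_R\setminus B_{(1-\delta)R}}W(\omega,x/\e,\nabla v_\e)\dx$ and $\int_{B_R\setminus B_{(1-\delta)R}}W(\omega,x/\e,\xi)\dx$ on the annulus (both negligible relative to $|B_R|$ as $\delta\to 0$, using equi-integrability and the ergodic theorem), and the error term involving $W\bigl(\omega,x/\e,\,\e\nabla\eta\otimes\phi_\xi(\omega,\cdot/\e)\bigr)$ on the annulus, which is the real difficulty. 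A diagonal argument in $(\e,\delta)$ then closes the estimate.

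\textbf{The main obstacle} is exactly the control of this last error term, and it is here that the two structural hypotheses enter differently. Under \ref{(A5)} ($p$-coercivity with $p>d-1$) one applies Lemma~\ref{L:optim} with $u_i=\e\phi_\xi(\omega,\cdot/\e)$: since $\e\phi_\xi(\omega,\cdot/\e)\rightharpoonup 0$ in $W^{1,p}$ (Lemma~\ref{l.defW_hom}(ii)), the quantity $\|\nabla\eta\otimes u_i\|_{L^\infty(B_R)}$ can be made as small as desired by the sphere-embedding estimate, so the argument of $W$ in the error term is uniformly bounded, and then \ref{(A3)} (local boundedness) plus the ergodic theorem show the error is $O(|B_R\setminus B_{(1-\delta)R}|)=o(|B_R|)$ as $\delta\to0$. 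Under \ref{(A4)} instead, $\nabla\eta\otimes\phi_\xi$ is not small in $L^\infty$, but one first truncates the peaks of the corrector: replace $\phi_\xi(\omega,\cdot)$ by $T_M\phi_\xi$ (componentwise truncation at height $M$), so that $\|\e\nabla(T_M\phi_\xi)(\omega,\cdot/\e)\|_\infty$ is controlled on the truncation set and $\nabla\eta\otimes (T_M\phi_\xi)$ is bounded by $CM/(\delta R)$; on the set where truncation is active, the mild monotonicity \ref{(A4)} bounds $W$ of the truncated gradient by $C\,W$ of the untruncated gradient plus $\Lambda$, and the equi-integrability of the corrector energy density (from the strengthened ergodic theorem of \cite{RR21}) forces the measure of the truncation set to zero as $M\to\infty$, killing that contribution; the remaining bulk error with the bounded argument is again handled by \ref{(A3)} and the ergodic theorem. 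One then diagonalizes over $\e\to0$, $\delta\to0$, $M\to\infty$ to obtain $\limsup_\e F_\e(\omega,u_{\e},B_R)\leq |B_R|W_{\rm hom}(\xi)$, and the matching lower bound from Proposition~\ref{p.lb} upgrades this to a limit, completing the proof.
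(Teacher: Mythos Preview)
Your outline follows the paper's proof closely---reduction to affine functions, corrector plus cut-off, Lemma~\ref{L:optim} under \ref{(A5)}, truncation under \ref{(A4)}---but two technical points would prevent it from closing as written.

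First, the convexity splitting you invoke on the annulus is not valid: in $\nabla u_{\e,\delta}=\eta\nabla v_\e+(1-\eta)\xi+\nabla\eta\otimes(v_\e-\xi x)$ the three weights $\eta$, $1-\eta$, $1$ sum to $2$, and without a $\Delta_2$-condition on $W$ you cannot bound $W(\nabla u_{\e,\delta})$ by $W(\nabla v_\e)+W(\xi)+W(\nabla\eta\otimes\cdots)$. The paper introduces an extra parameter $t\in(0,1)$, works with $tv_{\e,\delta,j,s}$, and writes $t\nabla v_{\e,\delta,j,s}=t\eta(\cdots)+t(1-\eta)(\cdots)+(1-t)\tfrac{t}{1-t}(\cdots)$, which \emph{is} a genuine convex combination; the limit $t\uparrow 1$ is taken last using continuity of $W_{\rm hom}$. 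Without this device the error term carries uncontrolled weight.

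Second, under \ref{(A4)} you truncate the corrector $\phi_\xi$ rather than the full competitor. On the set where truncation acts, the gradient $\xi+\nabla T_M\phi_\xi$ then has the affected rows equal to $e_k^T\xi$, not zero, so \ref{(A4)} (which compares a matrix $\Xi$ with matrices obtained by \emph{zeroing} some of its rows) does not apply with $\Xi=\xi+\nabla\phi_\xi$. The paper truncates $u_{j,\e}$ itself, so that $e_k^T\nabla T_s u_{j,\e}\in\{0,e_k^T\nabla u_{j,\e}\}$ and \ref{(A4)} gives $W(\omega,\tfrac{x}{\e},\nabla T_s u_{j,\e})\leq C\,W(\omega,\tfrac{x}{\e},\nabla u_{j,\e})+\Lambda(\omega,\tfrac{x}{\e})$ directly. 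A related omission: the ergodic convergence of the corrector energy $\overbar W(\cdot,\xi+\overbar h_\xi)$ is fixed on a single null set only for rational $\xi$ (see the discussion opening Section~\ref{s.proofs}), so the paper first approximates general $\xi$ by $\xi_j\in\Q^{m\times d}$ and lets $j\to\infty$ before $\delta\to 0$; your plan should include this layer as well.
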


\begin{proof}

{\bf Step 1.} Local recovery sequence for affine functions with rational gradient.

We claim that for any bounded, open set $A\subset\R^d$, any affine function  $u:A\to\R^m$ with $\nabla u=\xi\in\mathbb{Q}^{m\times d}$ and $u_\e=u+\e \phi_{\xi}(\omega,\cdot/\e)$, where $\phi_{\xi}$ is given as in Lemma~\ref{l.defW_hom}, it holds that 
\begin{equation}\label{pf:recovery:lim1}
\lim_{\e\downarrow0}\|u_\e-u\|_{L^1(A)}=0\quad\mbox{and}\quad\lim_{\e\downarrow0}\int_AW(\omega,\tfrac{x}\e,\nabla u_\e(x))\dx=\int_AW_{\rm hom}(\nabla u(x))\dx.
\end{equation}
%
%
%

Indeed, the convergence $u_{\e}\to u$ in $L^1(A)^m$ is a direct consequence of Lemma~\ref{l.defW_hom}, while the ergodic theorem in the form of Lemma \ref{l.weakL1} yields that
\begin{align*}
\lim_{\e\downarrow0}\dashint_AW(\omega,\tfrac{x}\e,\nabla u_\e(x))\dx=&\lim_{\e\downarrow0}\dashint_{\frac1\e A}W(\omega,x,\xi+\nabla \phi_\xi(\omega,x))\dx
=\lim_{\e\downarrow0}\dashint_{\frac1\e A}W(\omega,x,\xi+ h_\xi(\tau_x\omega))\dx\\
=&\lim_{\e\downarrow0}\dashint_{\frac1\e A}\overbar W(\tau_x\omega,\xi+ h_\xi(\tau_x\omega))\dx
=W_{\rm hom}(\xi)=\dashint_AW_{\rm hom}(\nabla u)\dx.
\end{align*}

\smallskip

{\bf Step 2.} Recovery sequence with prescribed boundary values for affine functions -- Assumption \ref{(A4)}.

Let $A\subset\R^d$ be a bounded, open set and let $u$ be an affine function with $\nabla u=\xi\in\R^{m\times d}$. We claim that there exists a sequence $(v_\e)_\e\subset W^{1,1}(A)^m$ satisfying
\begin{equation}\label{eq:seqrec}
(v_\e)_\e\subset u+W_0^{1,1}(A)^m,\quad \lim_{\e\downarrow0}\biggl(\| v_\e-u\|_{L^1(A)}+\biggl|\int_A W(\omega,\tfrac{x}\e,\nabla v_\e)-W_{\rm hom}(\nabla u)\dx\biggr|\biggr)=0.
\end{equation}
Indeed, let $(u_j)_j$ be a sequence of affine functions satisfying $u_j\to u$ in $W^{1,\infty}(A)^m$ and $\nabla u_j\in \mathbb Q^{m\times d}$ for all $j\in\mathbb N$. In view of Step~1 there exists for every $j\in\mathbb N$ a sequence $(u_{j,\e})_\e\subset W^{1,1}(A)^m$ satisfying $u_{j,\e}\to u_j$ in $L^1(A)^m$ as $\e\downarrow0$ and \eqref{pf:recovery:lim1}. We glue $u_{j,\e}$ to $u$ at the boundary of $A$ and truncate peaks of $u_{j,\e}$ in $A$. More precisely, we consider for $\e,\delta,s>0$ and $j\in\N$ the function $v_{\e,\delta,j,s}\in W^{1,1}(A)^m$  given by
\begin{equation}\label{eq:ansatz}
 v_{\e,\delta,j,s}:=\eta T_s(u_{j,\e})+(1-\eta)u,
\end{equation} 
where $\eta=\eta_\delta \in C^1(A;[0,1])$ is a smooth cut-off function satisfying
\begin{equation}
\begin{cases}
{\eta}=0&\mbox{ on $\{x\in A\,:\,{\rm dist}(x,\partial A)<\delta\}$}\\\eta=1&\mbox{ on $\{x\in A\,:\,{\rm dist}(x,\partial A)>2\delta\}$}
\end{cases}
\end{equation}
and $T_s(u_{j,\e})\in L^\infty\cap W^{1,1}(A)^m$ is obtained from $u_{j,\e}$ by 'component-wise' truncation, that is,
\begin{equation}\label{eq:defTruncation}
T_s(u_{j,\e})\cdot e_k:=\max\{\min\{u_{j,\e}\cdot e_k,s\},-s\}\qquad\mbox{for $k\in\{1,\dots,m\}$.}
\end{equation} 
By the product rule, we obtain for every $t\in[0,1)$ 
\begin{align*}
t\nabla v_{\e,\delta,j,s}=&t(1-\eta)\nabla u+t\eta \nabla T_s(u_{j,\e})+(1-t)\frac{t}{1-t}\nabla \eta\otimes (T_s(u_{j,\e})-u).
\end{align*}
Hence, by convexity of $W$,
\begin{align*}
F_\e(\omega,tv_{\e,\delta,j,s},A)\leq& \int_{A}t(1-\eta)W(\omega,\tfrac{x}\e,\xi)\dx+t\int_{A}\eta W(\omega,\tfrac{x}\e,\nabla T_s(u_{j,\e}))\dx\\
&+(1-t)\int_{A} W(\omega,\tfrac{x}\e,\frac{t}{1-t}\nabla \eta\otimes (T_s(u_{j,\e})-u))\dx.
\end{align*}
Assumption \ref{(A3)} in combination with the ergodic theorem and the definition of $\eta$ imply that for every $t\in[0,1]$
\begin{equation}\label{P:limsup:I}
\limsup_{\delta\downarrow0}\limsup_{\e\downarrow0}\int_{A}t(1-\eta)W(\omega,\tfrac{x}\e,\xi)\dx\leq\lim_{\delta\to 0}|A\cap\{\dist(\cdot,\partial A)\leq 2\delta\}|\,\mathbb{E}[\overbar W(\cdot,\xi)]=0.
\end{equation}
Next, we show that for every $t\in[0,1]$ it holds that
\begin{equation}\label{P:limsup:II}
\limsup_{\delta\downarrow0}\limsup_{s\uparrow\infty}\limsup_{j\uparrow\infty}\limsup_{\e\downarrow0}t\int_{A}\eta W(\omega,\tfrac{x}\e,\nabla T_s(u_{j,\e}))\dx\leq |A|W_{\rm hom}(\xi).
\end{equation}
For this, we start with the decomposition
\begin{align*}
&\int_{A}t\eta W(\omega,\tfrac{x}\e,\nabla T_s(u_{j,\e}))\dx\\
\leq&\int_{A\cap \{|u_{j,\e}|_\infty<s\}} W(\omega,\tfrac{x}\e,\nabla u_{j,\e})\dx+\int_{A\cap \{|u_{j,\e}|_\infty\geq s\}} W(\omega,\tfrac{x}\e,\nabla T_s(u_{j,\e})))\dx.
\end{align*}
Since $W\geq0$, for all $s>0$ we have that
\begin{align}\label{pf:rec:IIa}
\limsup_{j\uparrow\infty}\limsup_{\e\downarrow0}\int_{A\cap \{|u_{j,\e}|_\infty<s\}} W(\omega,\tfrac{x}\e,\nabla u_{j,\e})\dx\leq& \limsup_{j\uparrow\infty}\limsup_{\e\downarrow0}\int_{A}W(\omega,\tfrac{x}\e,\nabla u_{j,\e})\dx\notag\\
=&\limsup_{j\uparrow\infty}|A|W_{\rm hom}(\xi_j)=|A|W_{\rm hom}(\xi),
\end{align}
where we used \eqref{pf:recovery:lim1} and the continuity of $W_{\rm hom}$. In order to estimate the term where truncation is active, we use the definition of $T_s$ in the form
$$
\forall k\in\{1,\dots,m\}:\quad e_k^T \nabla T_s(u_{j,\e})\in\{0,e_k^T\nabla u_{j,\e}\} \quad\mbox{a.e.}
$$
For $s\geq \|u\|_{L^\infty(D)}+2$ and $j$ sufficiently large such that it holds $\|u_j-u\|_{L^{\infty}(D)}<1$, we have
$$
\{|u_{j,\e}|_\infty \geq s\}\subset\{|u_{j,\e}-u_j|_\infty\geq 1\}.
$$
Hence, we obtain with help of \ref{(A4)} and $s$ and $j$ as above that
\begin{align*}
\int_{A\cap \{|u_{j,\e}|_\infty\geq s\}} W(\omega,\tfrac{x}\e,\nabla T_s(u_{j,\e})))\dx&\leq C\int_{A\cap \{|u_{j,\e}|_\infty\geq s\}}(W(\omega,\tfrac{x}\e,\nabla u_{j,\e})+\Lambda(\omega,\tfrac{x}\e))\dx\\
&\leq C\int_{A\cap \{|u_{j,\e}-u_j|_\infty\geq 1\}} (W(\omega,\tfrac{x}\e,\nabla u_{j,\e})+\Lambda(\omega,\tfrac{x}\e))\dx.
\end{align*}
We claim that
\begin{align}\label{pf:rec:IIb}
\limsup_{\e\downarrow0}\int_{A\cap \{|u_{j,\e}-u_j|_\infty\geq 1\}} (W(\omega,\tfrac{x}\e,\nabla u_{j,\e})+\Lambda(\omega,\tfrac{x}\e))\dx= 0,
\end{align}
which together with \eqref{pf:rec:IIa} yields \eqref{P:limsup:II}.
In order to verify \eqref{pf:rec:IIb}, recall that $\nabla u_{j,\e}=\xi_j+\nabla \phi_{\xi_j}(\tfrac{\cdot}\e)$, so that the ergodic theorem in the form of Lemma~\ref{l.weakL1} implies that $W(\omega,\tfrac{\cdot}\e,\nabla u_{j,\e})\rightharpoonup W_{\rm hom}(\xi_j)$ and $\Lambda(\omega,\tfrac{\cdot}\e)\rightharpoonup \mathbb E[\overbar\Lambda]$ weakly in $L^1(A)$ as $\e\downarrow0$. In particular, due to the Dunford-Pettis theorem (see e.g.\ \cite[Theorem 2.54]{FoLe}) the sequences $(W(\omega,\tfrac{\cdot}\e,\nabla u_{j,\e}))_\e$ and $(\Lambda(\omega,\tfrac{\cdot}\e))_\e$ are equi-integrable and thus \eqref{pf:rec:IIb} follows from the convergence $u_{j,\e} \to u_j$ in $L^1(A)^m$ which implies that $|A\cap\{|(u_{j,\e}-u_j|_\infty\geq1\}|\to0$. 

It remains to show
\begin{equation}\label{P:limsup:III}
\limsup_{t\uparrow1}\limsup_{\delta\downarrow0}\limsup_{s\uparrow\infty}\limsup_{j\uparrow\infty}\limsup_{\e\downarrow0}(1-t)\int_{A} W(\omega,\tfrac{x}\e,\frac{t}{1-t}\nabla \eta\otimes (T_s(u_{j,\e})-u))\dx\leq0.
\end{equation}
Using once more the convexity of $W$, we can bound the integral by
\begin{align}\label{eq:insert_u_j}
	\int_{A} W(\omega,\tfrac{x}\e,\frac{t}{1-t}\nabla \eta\otimes (T_s(u_{j,\e})-u))\dx&\leq \int_{A} W(\omega,\tfrac{x}\e,\frac{2t}{1-t}\nabla \eta\otimes (T_s(u_{j,\e})-u_j))\dx\nonumber
	\\
	&\quad +\int_{A} W(\omega,\tfrac{x}\e,\frac{2t}{1-t}\nabla \eta\otimes (u_j-u))\dx.
\end{align}
For the last term, recall that $u_j\to u$ in $L^{\infty}(A)^m$. Hence, given $s,\delta>0$ and $t\in [0,1)$, for $j$ large enough we have 
\begin{equation*}
	\left|\frac{2t}{1-t}\nabla \eta\otimes (u_j-u)\right|\leq 1\quad\text{ on }A,
\end{equation*}
so that for such $j$ 
\begin{equation*}
	0\leq W(\omega,\tfrac{x}\e,\frac{2t}{1-t}\nabla \eta\otimes (u_j-u))\leq \sup_{|\zeta|\leq 1}W(\w,\tfrac{x}{\e},\zeta).
\end{equation*}
Due to Assumption \ref{(A3)} we can apply the ergodic theorem to the function on the right-hand side and deduce that
\begin{equation}\label{eq:u-u_j}
	\limsup_{t\uparrow 1}\limsup_{\delta\downarrow 0}\limsup_{s\uparrow\infty}\limsup_{j\uparrow\infty}\limsup_{\e\downarrow0}(1-t)\int_{A} W(\omega,\tfrac{x}\e,\frac{2t}{1-t}\nabla \eta\otimes (u_j-u))\dx\leq 0.
\end{equation}
To treat the other right-hand side term in \eqref{eq:insert_u_j}, first recall that $u_{j,\e}\to u_j$ in $L^1(A)^m$. Using Egorov's theorem, for every sequence $\e\to 0$  we find a subsequence such that for any $\eta>0$ there exists a measurable set $A_{\eta}$ with measure less than $\eta$ and such that $u_{j,\e}\to u_j$ uniformly on $A\setminus A_{\eta}$. Consider $j$ large enough such that $\|u_j-u\|_{L^{\infty}(A)}<1$ and $s\geq\|u\|_{L^{\infty}(A)}+2$. Then $T_su_j=u_j$ a.e. on $A$. By construction, for any such $s$, and $\delta>0$ and $t\in[0,1)$ there exist $r>0$ such that
$$
\sup_{\e>0}\left|\frac{t}{1-t}\nabla \eta\otimes (T_s(u_{j,\e})-u_j)\right|\leq r.$$
Since $T_s(u_{j,\e})$ also converges uniformly to $T_s(u_j)=u_j$ on $A\setminus A_{\eta}$, for $\e$ small enough we can bound the integrand by
\begin{equation*}
	0\leq W(\omega,\tfrac{x}\e,\frac{t}{1-t}\nabla \eta\otimes (T_s(u_{j,\e})-u_j))\dx\leq \mathds{1}_{A_{\eta}}\sup_{|\zeta|\leq r}W(\w,\tfrac{x}{\e},\zeta)+\sup_{|\zeta|\leq 1}W(\w,\tfrac{x}{\e},\zeta).
\end{equation*}
The second right-hand side term can be treated as before, while for the first one the equi-integrability of $x\mapsto\sup_{|\zeta|\leq r}W(\w,\tfrac{x}{\e},\zeta)$ (cf. Assumption \ref{(A3)} and Lemma \ref{l.weakL1}) implies that
\begin{equation*}
	\lim_{\eta\to 0}\limsup_{\e\to 0}\int_{A_{\eta}}\sup_{|\zeta|\leq r}W(\w,\tfrac{x}{\e},\zeta)\dx=0.
\end{equation*}
Since the limit does not depend on the subsequence we picked for Egorov's theorem, we proved that 
\begin{equation*}
	\limsup_{t\uparrow 1}\limsup_{\delta\downarrow 0}\limsup_{s\uparrow\infty}\limsup_{j\uparrow\infty}\limsup_{\e\downarrow0}(1-t)\int_{A} W(\omega,\tfrac{x}\e,\frac{2t}{1-t}\nabla \eta\otimes (T_s(u_{j,\e}-u_j))\dx\leq 0,
\end{equation*}
so that in combination with \eqref{eq:u-u_j} and \eqref{eq:insert_u_j} we indeed obtain \eqref{P:limsup:III}.

Combining \eqref{P:limsup:I}, \eqref{P:limsup:II} and \eqref{P:limsup:III} with the definition of $v_{\e,\delta,j,s}$, we obtain
\begin{align*}
\limsup_{t\uparrow1}\limsup_{\delta\downarrow0}\limsup_{s\uparrow\infty}\limsup_{j\uparrow\infty}\limsup_{\e\downarrow0}\biggl(\|tv_{\e,\delta,j,s}-u\|_{L^1(A)}+F_\e(\omega,tv_{\e,\delta,j,s})-\int_A W_{\rm hom}(\nabla u)\,dx\biggr)\leq 0.
\end{align*}
Passing to a suitable diagonal sequence and using the $\liminf$-inequality of Proposition~\ref{p.lb} on the bounded, open set $A$, we obtain the desired recovery sequence satisfying \eqref{eq:seqrec}.

\smallskip

{\bf Step 3.} Recovery sequence with prescribed boundary values for affine functions -- Assumption \ref{(A5)}.

We show that for bounded, open set $A\subset\R^d$, every affine function $u$ with $\nabla u=\xi\in\R^{m\times d}$ there exists a sequence $(v_\e)_\e$ satisfying 
\begin{equation}\label{eq:seqrecp}
(v_\e)_\e\subset u+W_0^{1,p}(A)^m,\quad \lim_{\e\downarrow0}\biggl(\| v_\e-u\|_{L^p(A)}+\biggl|\int_A W(\omega,\tfrac{x}\e,\nabla v_\e)-W_{\rm hom}(\nabla u)\dx\biggr|\biggr)=0.
\end{equation}

We first consider the case when $A$ is a ball. Let $B=B_R(x_0)$ with $R>0$, $x_0\in\R^d$,consider a sequence $u_j$ of affine functions satisfying $\nabla u_j=\xi_j\in \mathbb Q^{m\times d}$ and $u_j\to u$ in $W^{1,\infty}(B)^m$. For $\delta\in(0,\frac12)$ and $j\in\mathbb N$, let $\eta=\eta_{\e,\delta,j}$ be as in Lemma~\ref{L:optim} with $N=1$ and $u_1=\e \phi_{\xi_j}(\frac\cdot\e)$. We set
\begin{align*}
v_{\e,\delta,j}:=\eta_{\e,\delta,j} (u_j+\e\phi_{\xi_j}(\tfrac{\cdot}\e))+(1-\eta_{\e,\delta,j})u.
\end{align*}
Clearly, we have
\begin{equation}\label{pf:limsupb:conv}
\limsup_{t\uparrow1}\limsup_{\delta\downarrow0}\limsup_{j\uparrow \infty}\limsup_{\e\downarrow0}\|tv_{\e,\delta,j}-u\|_{L^p(B)}=0
\end{equation}
and by convexity, we have 
\begin{align*}
F_\e(\omega,tv_{\e,\delta,j},B)\leq& \int_{B}t(1-\eta_{\e,\delta,j})W(\omega,\tfrac{x}\e,\xi)\dx+t\int_{B}\eta_{\e,\delta,j} W(\omega,\tfrac{x}\e,\xi_j+\nabla \phi_{\xi_j}(\tfrac{x}\e))\dx\\
&+(1-t)\int_{B} W(\omega,\tfrac{x}\e,\frac{t}{1-t}\nabla \eta_{\e,\delta,j}\otimes (\e\phi_{\xi_j}(\tfrac{x}\e)+u_j-u))\dx.
\end{align*}
Similarly to Step~2, we obtain
\begin{equation}\label{pf:limsupb:ii0}
\limsup_{t\uparrow1}\limsup_{\delta\downarrow0}\limsup_{j\uparrow\infty}\limsup_{\e\downarrow0}\int_{B}t(1-\eta_{\e,\delta,j})W(\omega,\tfrac{x}\e,\xi)=0
\end{equation}
(see \eqref{P:limsup:I}) and 
\begin{equation}\label{pf:limsupb:ii}
\limsup_{t\uparrow1}\limsup_{\delta\downarrow0}\limsup_{j\uparrow\infty}\limsup_{\e\downarrow0}\,t\int_{B}\eta_{\e,\delta,j} W(\omega,\tfrac{x}\e,\xi_j+\nabla \phi_{\xi_j}(\tfrac{x}\e))\dx\leq |B|W_{\rm hom}(\xi)=\int_B W_{\rm hom}(\nabla u)\dx.
\end{equation}
Hence, it remains to estimate the last term in the above bound. By Lemma~\ref{L:optim}, we find for every $\rho>0$ a constant $C_\rho<\infty$ depending only on $d,m,p$ and $\rho>0$ such that
\begin{align*}
\|\nabla \eta_{\e,\delta,j}\otimes \e\phi_{\xi_j}(\tfrac{\cdot}\e)\|_{L^\infty(B)}\leq&\frac{\rho}{\delta}\biggl(\frac1{\delta R^d}\int_{B}|\nabla \phi_{\xi_j}(\tfrac{x}\e)|^p\dx\biggr)^\frac1p+\frac{C_{\rho}}{\delta R}\biggl(\frac1{\delta R^d}\int_{B}|\e\phi_{\xi_j}(\tfrac{x}\e)|^p\dx\biggr)^\frac1p.
\end{align*}
By Lemma \ref{l.defW_hom} the last integral vanishes as $\e\to 0$, while the first right-hand side integral remains bounded as $\e\to 0$. From the arbitrariness of $\rho>0$ we thus infer that
\begin{align*}
\limsup_{\e\downarrow 0}\|\nabla \eta_{\e,\delta,j}\otimes \e\phi_{\xi_j}(\tfrac{\cdot}\e)\|_{L^\infty(B)}=0,
\end{align*}
which implies with help of the triangle inequality and \eqref{L:optim:eta} that for all $t\in[0,1)$, $\delta\in(0,\frac12]$ and $j\in\mathbb N$ it holds
\begin{align*}
\limsup_{\e\downarrow0}\biggl\|\frac{t}{1-t}\nabla \eta\otimes (\e\phi_{\xi_j}(\tfrac{\cdot}\e)+u_j-u)\biggr\|_{L^\infty(B)}\leq& \frac{2}{1-t}\frac1{\delta R}\|u_j-u\|_{L^\infty(B)}.
\end{align*}
In particular, for all $t\in(0,1)$, $\delta\in(0,\frac12]$ and $j$ sufficiently large (depending on $t$ and $\delta$) we have
\begin{align*}
	\limsup_{\e\to 0}\int_{B} W(\omega,\tfrac{x}\e,\frac{t}{1-t}\nabla \eta_{\e,\delta,j}\otimes (\e\phi_{\xi_j}(\tfrac{x}\e)+u_j-u))\dx&\leq \limsup_{\e\to 0}\int_B \sup_{|\zeta|\leq 1}W(\w,\tfrac{x}{\e},\zeta)\dx
	\\
	&=|D|\,\mathbb{E}[\sup_{|\zeta|\leq 1}\overbar W(\cdot,\zeta)],
\end{align*}
where we used Assumption \ref{(A3)}  and the ergodic theorem in the form of Lemma \ref{l.weakL1}. Thus
%
\begin{align}\label{pf:limsupb:iii3}
\limsup_{t\uparrow1}\limsup_{\delta\downarrow0}\limsup_{j\uparrow \infty}\limsup_{\e\downarrow0}(1-t)\int_BW(\omega,\tfrac{x}\e,\frac{t}{1-t}\nabla \eta_{\e,\delta,j}\otimes (\e\phi_{\xi_j}(\tfrac{x}\e)+u_j-u))\dx\leq 0.
\end{align}
Combining \eqref{pf:limsupb:conv}, \eqref{pf:limsupb:ii0}, \eqref{pf:limsupb:ii} and \eqref{pf:limsupb:iii3}, we obtain a diagonal sequence satisfying \eqref{eq:seqrecp} in the case $A=B$. 

Next, we remove the restriction on $A$ being a ball and consider a general bounded, open set $A\subset\R^d$ and an affine function $u$ with $\nabla u=\xi\in\R^{m\times d}$. By the Vitali covering theorem, we find a collection of disjoint balls $B^{(j)}\subset A$, $j\in\mathbb N$ such that $|A\setminus \cup_{i\in\mathbb N} B^{(i)}|=0$. Hence, for every $\nu>0$ there exists $N\in\mathbb N$ such that $|A\setminus (\cup_{i=1}^N B^{(i)})|<\nu$. The previous result for balls ensures that for all $i\in\{1,\dots,N\}$, we find a sequence $(v_\e^{(i)})_\e\in u+W_0^{1,p}(B^{(i)})^m$ such that
$$
\lim_{\e\downarrow}\|v_\e^{(i)}-u\|_{L^p(B^{(i)})}=0\quad\mbox{and}\quad \lim_{\e\downarrow0}F(\omega,v^{(i)}_\e,B^{(i)})=\int_{B^{(i)}}W_{\rm hom}(\nabla u)\dx.
$$
Setting
$$
v_\e:=u+\sum_{i=1}^N (v_\e^{(i)}-u),
$$
we  have that $v_\e\in u+W_0^{1,p}(A)^m$ and $v_\e\to u$ in $L^p(A)^m$. Moreover, by the ergodic theorem it holds that
\begin{align*}
\limsup_{\e\downarrow0}F(\omega,u_\e,A)\leq& \limsup_{\e\downarrow0}F(\omega,u_\e,A\setminus \cup_{i=1}^NB^{(i)})+\sum_{i=1}^N\limsup_{\e\downarrow0}F(\omega,u_\e,B^{(i)})\\
=& \limsup_{\e\downarrow0}\int_{A\setminus \cup_{i=1}^NB^{(i)}}W(\omega,\tfrac{x}\e,\xi)\dx+\sum_{i=1}^N\int_{B^{(i)}} W_{\rm hom}(\nabla u)\dx\\
\leq& \mathbb{E}[\overbar W(\cdot,\xi)]\,|A\setminus\cup_{i=1}^NB^{(i)}| +\int_AW_{\rm hom}(\nabla u)\dx,
\end{align*}
where we used in the last step $W_{\rm hom}\geq0$. Letting $N\uparrow +\infty$, we conclude the proof.

\smallskip

{\bf Step 4.} The general case. 

In this step, we pass from the limsup-inequality for affine functions to the limsup-inequality for general functions $u\in W^{1,1}(D)^m$. Let us first consider $u\in W^{1,\infty}(D)^m$ which is piecewise affine, i.e., there exist finitely many disjoint open sets $A_i\subset D$, $i=1,\dots,K$ such that $|D\setminus \cup_i A_i|=0$ and $ u|_{A_i}(x)=\xi_ix+b_i$ for some $\xi_i\in\R^{m\times d}$ and $b_i\in\R^m$. In view of Step~2 and Step~3, we find for every $A_i$ a sequence $(u_{i,\e})_\e\in u+W_0^{1,1}(A_i)^m$ satisfying either \eqref{eq:seqrec} or \eqref{eq:seqrecp}. Since all $u_{i,\e}$, $i=1,\dots,K$ coincide with $u$ at the boundary of $A_i$, we can glue them together and obtain a recovery sequence for $u$ on $D$.

\smallskip

The limsup-inequality for general $u\in W^{1,1}(D)^m$ follows as in \cite{DG_unbounded,Mue87}. By the 'locality of the recovery sequence' (see \cite[Corollary 3.3]{DG_unbounded}), we can assume that $D\subset\R^n$ is an open ball. Indeed, for a bounded, open set with Lipschitz boundary $D$ we find a ball $B$ such that $D\subset\subset B$. Assume that we have a recovery sequence $(u_\e)_\e$ satisfying $u_\e\to u$ in $L^1(B)^m$ and $F_\e(\omega,u_\e,B)\to \int_B W_{\rm hom}(\nabla u)\dx$ as $\e\downarrow0$. Then,
\begin{align*}
\lim_{\e\downarrow0}F_\e(\omega,u_\e,D)=&\lim_{\e\downarrow0}(F_\e(\omega,u_\e,B)-F_\e(\omega,u_\e,D\setminus B))
\leq \lim_{\e\downarrow0}F_\e(\omega,u_\e,B)-\liminf_{\e\downarrow0}F_\e(\omega,u_\e,D\setminus B)\\
\leq& \int_BW_{\rm hom}(\nabla u)\dx-\int_{B\setminus D}W_{\rm hom}(\nabla u)\dx=\int_D W_{\rm hom}(\nabla u)\dx,
\end{align*}
where we used in the second inequality that $u_\e$ is a recovery sequence on $B$ and the $\liminf$ inequality Proposition~\ref{p.lb}

Hence, it suffices to assume that $D\subset\R^n$ is an open ball and thus smooth and star-shaped. Fix $u\in W^{1,1}(D)^m$ such that $\int_D W_{\rm hom}(\nabla u)\dx<+\infty$. Since $W_{\rm hom}:\R^{m\times n}\to[0,+\infty)$ is convex (cf. Lemma \ref{l.defW_hom}), we can apply \cite[Lemma 3.6]{Mue87} and find a sequence $(u_j)_j$ of piecewise affine functions satisfying $u_j\to u$ in $W^{1,1}(D)^m$ and $\int_D W_{\rm hom}(\nabla u_j)\dx\to \int_D W_{\rm hom}(\nabla u)\dx$ as $j\to\infty$. Hence, by the previous argument, we find $(u_{j,\e})_\e$ with $u_{j,\e}\to u_j$ in $L^1(D)^m$ and $F_\e(\omega,u_{j,\e},D)\to\int_D W_{\rm hom}(\nabla u_j)\dx$ and thus
$$
\lim_{j\to\infty}\lim_{\e\downarrow0}\biggl(\| u_{j,\e}-u\|_{L^1(D)}+\biggl|\int_D W(\omega,\tfrac{x}\e,\nabla u_{\e,j})-W_{\rm hom}(\nabla u)\,dx\biggr|\biggr)=0.
$$
The claim follows again by a diagonal argument. 
\end{proof}

\subsection{Convergence with boundary value problems and external forces}
In this section we prove the $\Gamma$-convergence result under Dirichlet boundary conditions and with external forces, i.e., Theorem \ref{thm:constrainedGamma}.
\begin{proof}[Proof of Theorem \ref{thm:constrainedGamma}]
In a first step we consider the case $f_{\e}=f=0$. Weak $W^{1,1}$-compactness  (or $W^{1,p}$-compactness  assuming \ref{(A5)}) of energy bounded sequences follows from the unconstrained case (cf. Lemma \ref{l.compactness}) and the fact that the Dirichlet boundary condition allows us to apply a Poincar\'e inequality to deduce $L^1$-boundedness. The $\Gamma$-liminf inequality is also a consequence of the unconstrained case (cf. Lemma \ref{p.lb}) since the boundary condition is stable under weak convergence in $W^{1,1}(D)^m$. Hence it remains to show the $\Gamma$-limsup inequality, which requires more work. Fix $u\in W^{1,1}(D)^m$ such that $u=g$ on $\partial D$ in the sense of traces and $F_{\rm hom}(u)<+\infty$. 

Let $t\in (0,1)$. Then $u_t:=g+t(u-g)\in g+W^{1,1}_0(D)^m$ and by convexity we have 
\begin{align*}
	F_{\rm hom}\left(\frac{(1+t)}{2t}(u_t-g)\right)&=F_{\rm hom}\left(\frac{(1+t)}{2}(u-g)\right)=F_{\rm hom}\left(\frac{(1+t)}{2}u+\frac{(1-t)}{2}\frac{(1+t)}{(t-1)}g\right)
	\\
	&\leq \frac{(1+t)}{2}F_{\rm hom}(u)+\frac{(1-t)}{2}F_{\rm hom}\left(\frac{(1+t)}{(t-1)}g\right)<+\infty,
\end{align*}
and $F_{\rm hom}(u_t)\leq (1-t)F_{\rm hom}(g)+tF_{\rm hom}(u)<+\infty$. In particular, 
\begin{equation*}
	\lim_{t\uparrow 1}F_{\rm hom}(u_t)\leq F_{\rm hom}(u)
\end{equation*}
and since $u_t\to u$ in $L^1(D)^m$ when $t\uparrow 1$ and $\tfrac{1+t}{2t}>1$, a diagonal argument allows us to show the $\Gamma$-limsup inequality for functions $u$ such that additionally $F_{\rm hom}(s(u-g))<+\infty$ for some $s>1$. By Lemma \ref{l.EkTemApprox} and the properties of $W_{\rm hom}$ (cf. Lemma \ref{l.defW_hom}) there exists a sequence $v_{n}\in C_c^{\infty}(D)^m$ such that $v_n\to u-g$ in $W^{1,1}(D)^m$ and
\begin{equation}\label{eq:zerobc}
	\lim_{n\to +\infty}\int_D W_{\rm hom}(s\nabla v_n)\dx=\int_D W_{\rm hom}(s\nabla (u-g))\dx<+\infty.
\end{equation}
By choosing a suitable subsequence (not relabeled) we can assume in the following that $(v_n)_n$ satisfies in addition $v_n\to u-g$ and $\nabla v_n\to \nabla v-\nabla g$ a.e. in $D$. Next, we show 
$$
\lim_{n\to +\infty}\int_D W_{\rm hom}(\nabla (v_n+g))\dx=\int_D W_{\rm hom}(\nabla u)\dx.
$$
Indeed, by Fatous lemma and the non-negativity of and $W_{\rm hom}$, we have
$$\liminf_{n\to\infty}\int_D W_{\rm hom}(\nabla (v_n+g))\dx\geq\int_D W_{\rm hom}(\nabla u)\dx.$$
To show the corresponding inequality for the $\limsup$, we first observe that for all $\xi\in\R^{m\times d}$
\begin{equation}\label{est:tildeWforfatou}
W_{\rm hom}(\xi+\nabla g)=W_{\rm hom}(\tfrac{1}{s}s\xi+(1-\tfrac{1}{s})\tfrac{s}{s-1}\nabla g(x))\leq W_{\rm hom}(s\xi)+W_{\rm hom}(\tfrac{s}{s-1}\nabla g(x)).
\end{equation}
Hence, the desired inequality follows with help of estimate \eqref{est:tildeWforfatou}, Fatous lemma and \eqref{eq:zerobc}:
\begin{align*}
\limsup_{n\to\infty}\int_D W_{\rm hom}(\nabla (v_n+g))\dx\leq&-\liminf_{n\to\infty}\int_D W_{\rm hom}(s\nabla v_n)+W_{\rm hom}(\tfrac{s}{s-1}\nabla g(x))-W_{\rm hom}(\nabla (v_n+g))\dx\\
&+\lim_{n\to\infty}\int_D W_{\rm hom}(s\nabla v_n)+W_{\rm hom}(\tfrac{s}{s-1}\nabla g(x))\dx\\
\leq&\int_D W_{\rm hom}(\nabla u)\dx.
\end{align*}
Since $g$ is Lipschitz-continuous, we thus deduce that there exists a sequence $u_n\in {\rm Lip}(D)^m$ such that $u_n=g$ on $\partial D$ with $u_n\to u$ in $W^{1,1}(D)^m$ and
\begin{equation}\label{eq:energyconv}
\lim_{n\to +\infty}\int_DW_{\rm hom}(\nabla u_n)\dx=\int_D W_{\rm hom}(\nabla u)\dx.
\end{equation}
Thus, by a further diagonal argument, it suffices to show the upper bound for Lipschitz-functions $u\in {\rm Lip}(D)^m$ with $u=g$ on $\partial D$. Applying \cite[Proposition 2.9, Chapter X]{EkTe} to each component of $u$, we find a sequence $u_n\in {\rm Lip}(D)^m$ and an increasing sequence of open sets $D_n\subset D$ such that $|D_\setminus D_n|\to 0$, the function $u_n$ is piecewise affine on $D_n$, $u_n=g$ on $\partial D$, $u_n\to u$ uniformly on $D$, $\nabla u_n\to\nabla u$ a.e. on $D$, and
\begin{equation}\label{eq:Lipbound}
	\|\nabla u_n\|_{L^{\infty}(D)}\leq \| \nabla u\|_{L^{\infty}(D)}+ o(1).
\end{equation}
Then $u_n\to u$ in $L^1(D)^m$ and the dominated convergence theorem and the continuity of $W_{\rm hom}$ imply that
\begin{equation}\label{eq:reducetoEkTemApprox}
	\lim_{n\to +\infty}W_{\rm hom}(\nabla u_n)\dx=\int_DW_{\rm hom}(\nabla u)\dx.
\end{equation}
Since $u_n$ is piecewise affine in $D_n$, there exist disjoint open sets $D_n^1,\dots,D_n^{N_n}$ such that $D_n=\cup_{j} D_n^j$ and $\nabla u=\xi_n^j$ on $D_n^j$. By Step~2 and Step~3 of the proof of Proposition \ref{p.recoverysequence}, we find for every $j\in\{1,\dots,N_n\}$ a recovery sequence $u_{\e,n}^j\in u_n+W_0^{1,1}(D_n^j)^m$ satisfying
$$
u_{\e,n}^j\to u_n\quad\mbox{in $L^1(D_n^j)^m$ and}\quad \lim_{\e\downarrow0}F_\e (\omega,u_{\e,n}^j,D_n^j)=\int_{D_n^j}W_{\rm hom}(\nabla u_n)\dx.
$$
Since $u_{\e,n}^j\in u_n+W_0^{1,1}(D_n^j)^m$, we have that $u_{\e,n}:D\to\R^m$ defined by
$$
u_{\e,n}(x)=\begin{cases}u_{\e,n}^j(x)&\mbox{if $x\in D_n^j$}\\u_n(x)&\mbox{if $x\in D\setminus D_n$}\end{cases}\quad\mbox{satisfies}\quad u_{\e,n}\in W^{1,1}(D)^m,\quad u_{\e,n}=g\text{ on }\partial D.
$$
%
Finally, $u^j_{\e,n}$ being a recovery sequence on $D^j_n$, it holds that $u_{\e,n}\to u_n$ in $L^1(D)^m$ as $\e\to 0$ and
\begin{align}\label{eq:almostrecovery}
\limsup_{\e\to 0}F_{\e}(\w,u_{\e,n},D)&= \sum_{j=1}^{N_n}\lim_{\e\to 0}F_{\e}(\w,u_{\e,n}^j,D_n^j)+\limsup_{\e\to 0}F_{\e}(\w,u_n,D\setminus D_n)\nonumber
\\
&=\int_{D_n}W_{\rm hom}(\nabla u_n)\dx+\limsup_{\e\to 0}F_{\e}(\w,u_n,D\setminus D_n).
\end{align}
We estimate the last term as follows: the function $\|\nabla u_n\|_{L^{\infty}(D)}$ is uniformly bounded by \eqref{eq:Lipbound}, so that for some $r>0$ Assumption \ref{(A3)} and the ergodic theorem in the form of Lemma \ref{l.weakL1} imply that
\begin{equation*}
\limsup_{\e\to 0}F_{\e}(\w,u_n,D\setminus D_n)\leq\lim_{\e\to 0}\int_{D\setminus D_n}\sup_{|\zeta|\leq r}W(\w,\tfrac{x}{\e},\zeta)\dx=|D\setminus D_n|\, \mathbb{E}\left[\sup_{|\zeta|\leq r}W(\cdot,0,\zeta)\right].
\end{equation*}
Inserting this bound into \eqref{eq:almostrecovery}, we infer from \eqref{eq:energyconv} that
\begin{equation*}
\lim_{n\to +\infty}\limsup_{\e\to 0}F_{\e}(\w,u_{\e,n},D)\leq \int_D W_{\rm hom}(\nabla u)\dx.
\end{equation*}
Since $u_n\to u$ in $L^1(D)^m$, using another diagonal argument we conclude the proof without external forces.

Next, we consider the case of non-trivial forcing terms $f_\e$ and $f$. Here, we only prove the case that $f_{\e},f\in L^d(D)^m$ are such that $f_{\e}\rightharpoonup f$ in $L^d(D)^m$, the refined results if $W$ satisfies \ref{(A5)} are simpler and left to the reader. We first show relative compactness of energy bounded sequence. Due to H\"older's inequality, the Sobolev embedding in $W^{1,1}(D)^m$, and Poincar\'e's inequality in $W^{1,1}_0(D)^m$, for any admissible $u$ we deduce that 
\begin{align*}
	\int_D |f_{\e}\cdot u|\dx&\leq \|f_{\e}\|_{L^{d}(D)}\|u\|_{L^{d/(d-1)}(D)}\leq C_0 \|f_{\e}\|_{L^d(D)}\|u\|_{W^{1,1}(D)}\\
	&\leq C_0\|f_{\e}\|_{L^d(D)}(\|\nabla u-\nabla g\|_{L^1(D)}+\|g\|_{W^{1,1}(D)})
	\\
	&\leq C_0\|f_{\e}\|_{L^d(D)} \|\nabla u\|_{L^1(D)}+C_0\|f_{\e}\|_{L^d(D)}\|g\|_{W^{1,1}(D)},
\end{align*}
which combined with \eqref{eq:quantitativelineargrowth} for $C=C_0 \sup_{\e}\|f_{\e}\|_{L^d(D)}$ and Remark \ref{r.stationary_extension} shows that
\begin{align}\label{eq:compactnesswithforces}
	F_{\e}(\w,u,D)-\int_Df_{\e}\cdot u\dx&\geq \frac{1}{2}F_{\e}(\w,u,D)+C\int_D|\nabla u|\dx-\underbrace{\int_D \sup_{|\eta|\leq C}W^*(\w,\tfrac{x}{\e},\eta)\dx}_{=:a_{\e}(\w)}-\int_D |f_{\e}\cdot u|\dx\nonumber
	\\
	&\geq \frac{1}{2}F_{\e}(\w,u,D)-a_{\e}(\w)-C\|g\|_{W^{1,1}(D)}.
\end{align}
Due to the ergodic theorem the sequence $a_{\e}(\w)$ is bounded when $\e\to 0$. Hence boundedness of $F_{\e,f_{\e},g}(\w,u_{\e},D)$ implies that also $F_{\e}(\w,u_{\e},D)$ is bounded, so the weak relative compactness of $u_{\e}$ in $W^{1,1}(D)^m$ is a consequence of the case $f_{\e}=0$. Moreover, as shown in \cite[Theorem B.1]{RR21}, the weak convergence in $W^{1,1}(D)^m$ implies the strong convergence in $L^{d/(d-1)}(D)^m$. Hence along any sequence with equibounded energy and with $u_{\e}\to u$ in $L^1(D)^m$, the term $\int_D  f_{\e}\cdot u_{\e}\dx$ converges to $\int_D  f\cdot u\dx$. Thus also the $\Gamma$-convergence is a consequence of the case $f_{\e}=0$.
\end{proof}

\subsection{Differentiability of the homogenized integrand}\label{subs:diff}
In this section we prove that if $W$ satisfies the stronger Assumption \ref{a.2}, then $W_{\rm hom}$ is continuously differentiable. To this end, we rely on convex analysis on generalized Orlicz spaces (cf. Section \ref{s.orlicz} and Remark \ref{r.Orliczspaces}). Given $\overbar\varphi$ defined by \eqref{eq:defphi}, let $L^{\overbar\varphi}(\Omega)^{m\times d}$ be the associated generalized Orlicz space. Below we prove the announced properties of $W_{\rm hom}$.
\begin{proposition}\label{p.onWhom}
Let $W$ satisfy Assumption \ref{a.2}. Then $W_{\rm hom}$ is continuously differentiable with derivative
\begin{equation*}
	\partial_{\xi}W_{\rm hom}(\xi)=\mathbb{E}[\partial_{\xi}W(\cdot,0,\xi+\overbar h_{{\xi}})],
\end{equation*}	
where $\overbar h_{\xi}$ is given by Lemma \ref{l.defW_hom}.
\end{proposition}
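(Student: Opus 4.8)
The plan is to exploit that $W_{\rm hom}$ is a finite convex function on $\mathbb{R}^{m\times d}$, hence automatically continuous, and that a finite convex function is differentiable at a point precisely when its subdifferential is a singleton; since a convex function with everywhere-singleton subdifferential is automatically $C^1$, it suffices to show $\partial W_{\rm hom}(\xi)=\{\mathbb{E}[\partial_\xi W(\cdot,0,\xi+\overbar h_\xi)]\}$ for every $\xi$. The natural route is to realize $W_{\rm hom}(\xi)$ as the value of a convex minimization problem on the Orlicz space $L^{\overbar\varphi}(\Omega)^{m\times d}$ restricted to the closed subspace $(F^1_{\rm pot})^m$, and to read off the subdifferential of $\xi\mapsto W_{\rm hom}(\xi)$ from the first-order optimality conditions for the corrector $\overbar h_\xi$ together with the subdifferential formula for convex integral functionals on generalized Orlicz spaces recalled at the end of Section~\ref{s.orlicz}.

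\textbf{Key steps.} First I would record that $\xi\mapsto\mathbb{E}[\overbar W(\cdot,\xi)]$ is a well-defined, finite, convex integral functional $\mathcal{I}$ on $L^{\overbar\varphi}(\Omega)^{m\times d}$: finiteness and the requisite integrability of $\overbar\varphi$ and $\overbar\varphi^{*}$ come from \ref{(A3)} and the two-sided bound \eqref{eq:varphi_and_W} in Remark~\ref{r.Orliczspaces}, so the cited version of \cite[Theorem 3.1]{Koz2} applies and gives $\partial\mathcal{I}(g)=\{\ell_a\in L^{\overbar\varphi^{*}}:\ell_a\in\partial_\xi\overbar W(\cdot,g)\ \text{a.e.}\}+\{\ell_s\in S^{\overbar\varphi}:\ell_s(h-g)\le0\ \forall h\in\mathrm{dom}(\mathcal{I})\}$. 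Since Assumption~\ref{a.2} makes $\overbar W(\w,\cdot)$ differentiable, the pointwise subdifferential is the singleton $\{\partial_\xi\overbar W(\w,g(\w))\}$, so the absolutely continuous part is forced to be $\partial_\xi\overbar W(\cdot,g)$ provided this function lies in $L^{\overbar\varphi^{*}}$. Second, I would establish exactly this integrability: from the Fenchel equality $\overbar W(\w,g)+\overbar W^{*}(\w,\partial_\xi\overbar W(\w,g))=\langle\partial_\xi\overbar W(\w,g),g\rangle$ combined with the linear growth $\overbar W^{*}(\w,\eta)\ge -\overbar W(\w,0)$ and the superlinear lower bound \eqref{eq:quantitativelineargrowth}, one controls $|\partial_\xi\overbar W(\w,g(\w))|$ in terms of $\overbar W(\w,g(\w))$ and $\overbar\Lambda$; feeding this into the conjugate of $\overbar\varphi$ and using \eqref{eq:varphi_and_W} yields $\partial_\xi\overbar W(\cdot,\xi+\overbar h_\xi)\in L^{\overbar\varphi^{*}}(\Omega)^{m\times d}$ for the minimizer $\overbar h_\xi$. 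Third, the Euler--Lagrange condition for $\overbar h_\xi$ over the closed subspace $V:=(F^1_{\rm pot})^m$ reads $0\in\partial(\mathcal{I}(\xi+\cdot)+\chi_V)(\overbar h_\xi)$; using that the singular part annihilates $L^\infty$ and, since differences of correctors and their truncations are again in $V\cap L^\infty$ (by the structure of $F^1_{\rm pot}$), one gets that the absolutely continuous part $\partial_\xi\overbar W(\cdot,\xi+\overbar h_\xi)$ is $L^{\overbar\varphi^{*}}$-orthogonal to $V$, i.e.\ it lies in the annihilator $V^{\perp}$, which by the Helmholtz-type decomposition on the probability space consists of (random) divergence-free fields; in particular it has constant expectation and $\mathbb{E}[\partial_\xi\overbar W(\cdot,\xi+\overbar h_\xi)]$ is the relevant quantity. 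Fourth, I would differentiate the value function: for any $\zeta\in\mathbb{R}^{m\times d}$, using $\overbar h_{\xi+\zeta}$ as a competitor for $W_{\rm hom}(\xi)$ and vice versa, convexity of $\mathcal{I}$ gives the two-sided estimate
\begin{equation*}
\mathbb{E}[\langle\partial_\xi\overbar W(\cdot,\xi+\overbar h_\xi),\zeta\rangle]\le W_{\rm hom}(\xi+\zeta)-W_{\rm hom}(\xi)\le \mathbb{E}[\langle\partial_\xi\overbar W(\cdot,\xi+\zeta+\overbar h_{\xi+\zeta}),\zeta\rangle],
\end{equation*}
where the inner products collapse to $\langle\mathbb{E}[\partial_\xi\overbar W(\cdot,\cdot)],\zeta\rangle$ because the fields are divergence-free and $\zeta$ is constant. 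Dividing by $|\zeta|$ and sending $\zeta\to0$, using upper semicontinuity of the map $\xi\mapsto\mathbb{E}[\partial_\xi\overbar W(\cdot,\xi+\overbar h_\xi)]$ (itself a consequence of convexity and dominated convergence once the $L^{\overbar\varphi^{*}}$-bound is uniform for $\xi$ in a bounded set), pins down both the directional derivative and the gradient as claimed; continuity of $\xi\mapsto\partial_\xi W_{\rm hom}(\xi)$ then follows from the standard fact that a finite convex function with everywhere-defined gradient is $C^1$.

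\textbf{Main obstacle.} The delicate point is the third step: showing that the absolutely continuous part of the subdifferential of the \emph{constrained} functional (over $V=(F^1_{\rm pot})^m$) still equals $\partial_\xi\overbar W(\cdot,\xi+\overbar h_\xi)$ and is orthogonal to $V$ in the Orlicz duality, rather than merely orthogonal modulo an uncontrolled singular functional. This requires knowing that the singular functionals in $S^{\overbar\varphi}$ vanish not only on $L^\infty$ but effectively on the test directions available in $V$ — which is where one must use that $V$ is large enough to contain truncations/localizations of correctors so that the inequality $\ell_s(h-\overbar h_\xi)\le0$ can be run with both signs on a dense-in-energy family. Handling the interaction between the corrector's lack of a priori $L^\infty$ bound and the Orlicz-space subdifferential calculus, and verifying the uniform-in-$\xi$ integrability of $\partial_\xi\overbar W(\cdot,\xi+\overbar h_\xi)$ needed for the limit $\zeta\to0$, is the part that will demand the most care; everything else is a fairly mechanical application of convex duality on $L^{\overbar\varphi}(\Omega)^{m\times d}$.
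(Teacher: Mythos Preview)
Your overall architecture (show $\partial W_{\rm hom}(\xi)$ is a singleton via the Orlicz subdifferential formula for $\mathcal{I}(g)=\mathbb{E}[\overbar W(\cdot,g)]$) matches the paper's, and you correctly identify the crux: controlling the singular component $\ell_s\in S^{\overbar\varphi}(\Omega)$ when it is tested against non-$L^\infty$ directions coming from $V=(F^1_{\rm pot})^m$. However, your proposed resolution of that obstacle is wrong, and the gap propagates to step~4.

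The claim that ``differences of correctors and their truncations are again in $V\cap L^\infty$'' is false. Elements of $F^1_{\rm pot}$ are mean-zero, row-wise curl-free matrix fields on $\Omega$; componentwise truncation destroys the curl-free (gradient) structure, and even if one truncates the physical-space potential $\phi_\xi$ and takes its gradient, the resulting field on $\Omega$ need not have mean zero. So you do not have a dense-in-energy family in $V\cap L^\infty$ on which to run $\ell_s(\,\cdot\,)\le 0$ with both signs. Consequently you cannot conclude that $\partial_\xi\overbar W(\cdot,\xi+\overbar h_\xi)$ is $L^{\overbar\varphi^*}$-orthogonal to $V$. Your two-sided estimate in step~4 then collapses: the lower bound $W_{\rm hom}(\xi+\zeta)-W_{\rm hom}(\xi)\ge\mathbb{E}[\langle\partial_\xi\overbar W(\cdot,\xi+\overbar h_\xi),\zeta\rangle]$ only follows from pointwise convexity after discarding the cross term $\mathbb{E}[\langle\partial_\xi\overbar W(\cdot,\xi+\overbar h_\xi),\overbar h_{\xi+\zeta}-\overbar h_\xi\rangle]$, which requires exactly the orthogonality you have not established.

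The paper avoids this entirely. Instead of attacking orthogonality to all of $V$, it invokes an abstract subdifferential formula for optimal-value functions (\cite[Corollary~7.3]{MNRT}) applied to $W_{\rm hom}(\xi)=\min\{G(y):y\in F(\xi)\}$, $G(y)=\mathbb{E}[\overbar W(\cdot,y)]$, $F(\xi)=(\xi+(F^1_{\rm pot})^m)\cap\mathrm{dom}(G)$, after checking that $G$ is continuous in $L^{\overbar\varphi}$ at the constant $\xi$. That formula expresses every $\eta\in\partial W_{\rm hom}(\xi)$ via some $\ell\in\partial G(\xi+\overbar h_\xi)$, and a short limiting argument (test with $\overbar h=z+t\overbar h_\xi$, $t\uparrow 1$, using $z+t\overbar h_\xi\in\mathrm{dom}(G)$ for $t<1$) yields $\langle\eta,z\rangle=\ell(z)$ for every \emph{constant} $z\in\mathbb{R}^{m\times d}$. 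Since constants are in $L^\infty$, the singular part of $\ell$ vanishes on them automatically (this was recorded in Section~\ref{s.orlicz}), giving $\langle\eta,z\rangle=\mathbb{E}[\partial_\xi\overbar W(\cdot,\xi+\overbar h_\xi)\,z]$ and hence the singleton. The key conceptual difference is that the paper only ever pairs the subdifferential with constant directions, so the delicate behaviour of $\ell_s$ on correctors never enters.
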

\begin{proof}
It suffices to show that $W_{\rm hom}$ is differentiable with the claimed derivative. The continuity of the derivative follows from general convex analysis \cite[Theorem 4.65]{FoLe}. Recall that
\begin{equation}\label{eq:p.onWhom:0}
	W_{\rm hom}(\xi)=\min_{\overbar h\in (F_{\rm pot}^1)^m}G(\xi+\overbar h)\qquad\mbox{where}\qquad G(z):=\mathbb{E}[\overbar W(\cdot,z)].
\end{equation}
We already know that $W_{\rm hom}$ is a real-valued, convex function on $\R^{m\times d}$. In particular, its subdifferential is always non-empty and $W_{\rm hom}$ is differentiable at $\xi\in\R^{m\times d}$ if and only if the subdifferential at $\xi$ contains exactly one element. We aim to express the subdifferential via a suitable chain rule. To this end, define the set-valued mapping $F:\R^{m\times d}\rightrightarrows L^{\overbar\varphi}(\Omega)^{m\times d}$ by
\begin{equation*}
	F(\xi)=\Big(\xi+(F_{\rm pot}^1)^m\Big)\cap {\rm dom}(G) .
\end{equation*}
Due to the estimate $\overbar\varphi(\w,\cdot)\leq C\overbar W(\w,\cdot)+\overbar\Lambda(\w)$ (cf. \eqref{eq:varphi_and_W}), it follows indeed that ${\rm dom}(G)\subset L^{\overbar\varphi}(\Omega)^{m\times d}$. Since $G$ is convex, the graph of $F$ defined by ${\rm gph}(F)=\{(\xi,\overbar h)\in\R^{m\times d}\times L^{\overbar\varphi}(\Omega)^{m\times d} ,\,\overbar h\in F(\xi)\}$ is a convex subset of $\R^{m\times d}\times L^{\overbar\varphi}(\Omega)^{m\times d}$. Clearly, we can rewrite the first identity in \eqref{eq:p.onWhom:0} by
\begin{equation}\label{eq:p.onWhom:1}
W_{\rm hom}(\xi)=\min\{G(y):\,y\in F(\xi)\}.
\end{equation}
We aim to use the representation result for subdifferentials of optimal-value functions as in \eqref{eq:p.onWhom:1} given in \cite[Corollary 7.3]{MNRT}. For this it is left to check that $G$ is finite and continuous at a point in $F(\xi)$ with respect to convergence in $L^{\overbar\varphi}(\Omega)^{m\times d}$. We choose the constant function $\overbar\xi\in F(\xi)$ and let $\overbar h_n\to 0$ in $L^{\overbar\varphi}(\Omega)^{m\times d}$. Set $t_n =1-\|\overbar h_n\|_{{\overbar\varphi}}$. Then the convexity of $W$ and of $\overbar\varphi$ together with \eqref{eq:varphi_and_W} and $\overbar \varphi(\cdot,0)=0$ yield that for $n$ large enough such that $t_n\in(0,1)$ 
\begin{align*}
	G(\overbar\xi+\overbar h_n)&=\mathbb{E}[\overbar W(\cdot,\xi+\overbar h_n)]\leq t_n\mathbb{E}[\overbar W(\cdot,\xi/t_n)]+(1-t_n)\mathbb{E}[\overbar W(\cdot,\overbar h_n/(1-t_n))]
	\\
	&\leq \mathbb{E}[\overbar W(\cdot,\xi/ t_n)]+\|\overbar h_n\|_{{\overbar\varphi}}\underbrace{\mathbb{E}[\overbar\varphi(\cdot,\overbar h_n/\|\overbar h_n\|_{\overbar\varphi})]}_{\leq 1}+\|\overbar h_n\|_{{\overbar\varphi}}\mathbb{E}[\overbar\Lambda]
	\\
	&\leq \mathbb{E}[\overbar W(\cdot,\xi/t_n)]+\|\overbar h_n\|_{\overbar\varphi}(1+\mathbb{E}[\Lambda]).
 \end{align*} 
Hence, $\|\overbar h_n\|_{\overbar\varphi}\to0$ as $n\to\infty$ yields
$$
\limsup_{n\to\infty}G(\overbar\xi+\overbar h_n)\leq \limsup_{n\to\infty}\mathbb{E}[\overbar W(\cdot,\xi/t_n)]=G(\overbar\xi).
$$
The reverse inequality follows from Fatous's lemma since $\overbar h_n\to 0$ also in $L^1(\Omega)^{m\times d}$ by the continuous embedding. Now we are in a position to apply \cite[Corollary 7.3]{MNRT} to conclude that
\begin{equation}\label{eq:partialwhom0}
	\partial W_{\rm hom}(\xi)=\bigcup_{\ell\in\partial G(\xi+\overbar h_{\xi})}\Big\{\eta\in \R^{m\times d}:\langle \eta,z-\xi\rangle\leq \ell(\overbar h-(\xi+\overbar h_\xi))\quad\text{ for all }z\in\R^{m\times d},\,\overbar h\in F(z)\Big\},
\end{equation}
where in the above equation $\ell(\overbar h-(\xi+\overbar h_\xi))$ stands for the duality pairing and $\overbar h_\xi\in (F^1_{\rm pot})^m$ is a minimizer in \eqref{eq:p.onWhom:0}.
Our goal is to show that the sets above are all singletons containing the same element, which then concludes the proof that $W_{\rm hom}$ is differentiable with the claimed derivative. For this, we first observe that for every $z\in\R^{m\times d}$ and $t\in(0,1)$, convexity of $\overbar W$ and Assumption \ref{(A3)} imply 
\begin{equation*}
	G(z+t\overbar h_{{\xi}})\leq G\left(t\xi+t\overbar h_{\xi}+(1-t)\frac{-t\xi+z}{1-t}\right)\leq t W_{\rm hom}(\xi)+(1-t)\mathbb{E}\left[W\left(\cdot,\frac{-t\xi+z}{1-t}\right)\right]<+\infty,
\end{equation*}
so that $z+t\overbar h_{\xi} \in {\rm dom}(G)$ and thus $z+t\overbar h_\xi\in F(z)$. Hence, for  $\eta\in \partial W_{\rm hom}(\xi)$ we deduce from \eqref{eq:partialwhom0}
\begin{equation*}
	\langle\eta,z-\xi\rangle\leq \ell(z-\xi+(t-1)\overbar h_{\xi})=\ell(z-\xi)+(t-1)\ell(\overbar h_{\xi})\overset{t\to 1}{\to}\ell(z-\xi).
\end{equation*}
This holds for all $z\in\R^{m\times d}$ and therefore 
\begin{equation*}
	\langle\eta,z\rangle=\ell(z)\quad\text{ for all }z\in\R^{m\times d}.
\end{equation*}
We claim that the above expression does not depend on the element $\ell\in\partial G(\xi+\overbar h_{\xi})$.
To this end, let us recall the expression for the subdifferential of $G$ at $\xi+\overbar h_{\xi}$ in Section \ref{s.orlicz}: for a function $\overbar h\in L^{\overbar\varphi}(\Omega)^{m\times d}$ we have
\begin{align*}
	\partial G(\overbar h)&=\{\ell_a\in L^{\overbar\varphi^*}(\Omega)^{m\times d}:\,\ell_a(\w)=\partial_{\xi}W(\w,\overbar h(\w))\}
	\\ 
	&\quad+\{\ell_s\in S^{\overbar\varphi}(\Omega):\,\ell_s(\overbar v-\overbar h)\leq 0\quad\text{ for all } \overbar v\in {\rm dom}(G)\}.
\end{align*}
In particular, there is only one possibility for the component $\ell_a$. Moreover, as noted in Section \ref{s.orlicz} it holds that $\ell_s(z)=0$ for all constant functions $z\in\R^{m\times d}$ and all $\ell_s\in S^{\overbar\varphi}(\Omega)$. This in turn yields that
\begin{equation*}
	\langle \eta,z\rangle =\ell_a(z)=\mathbb{E}[\partial_{\xi}W(\cdot,\xi+\overbar h_{\xi})z]
\end{equation*}
for all $z\in\R^{m\times d}$. Hence $\partial W_{\rm hom}(\xi)$ is a singleton containing the claimed element and we conclude the proof.
\end{proof}

\subsection{Stochastic homogenization of the Euler-Lagrange equations}
In this last section we provide the arguments for our main result on the Euler-Lagrange equations. The notation relies heavily on the generalized Sobolev-Orlicz spaces introduced in Remark \ref{r.Orliczspaces} (see also Section \ref{s.orlicz}).
\begin{proof}[Proof of Theorem \ref{thm:PDE}]
i): We start showing existence of minimizers for the problem
\begin{equation}\label{eq:epsminproblem}
	\min_{u\in g+ W_0^{1,1}(D)^m}\int_DW(\w,\tfrac{x}{\e},\nabla u(x))-f_{\e}(x)\cdot u(x)\dx.
\end{equation}
Recall that in \eqref{eq:compactnesswithforces}, for all $u\in g+W_0^{1,1}(D)^m$ we proved the estimate
\begin{equation*}
	F_{\e}(\w,u,D)-\int_Df_{\e}\cdot u\dx\geq \frac{1}{2}F_{\e}(\w,u,D)-a_{\e}(\w)-C\|g\|_{W^{1,1}(D)},
\end{equation*}
where $a_{\e}(\w)$ is finite and converging as $\e\to 0$.
This implies the compactness of minimizing sequences since for fixed $\e>0$ the coercivity of $F_{\e}(\w,\cdot,D)$ with respect to weak convergence in $W^{1,1}$ can be proven as in Lemma \ref{l.compactness} and moreover
\begin{equation}\label{eq:global_energybound}
\sup_{0<\e<1}\left(F_{\e}(\w,g,D)-\int_D  f_{\e}\cdot g\rangle\dx\right)<+\infty
\end{equation}
due to the Lipschitz regularity of $g$ and Assumption \ref{(A3)}. Moreover,  $F_{\e}(\w,\cdot,D)$ is weakly lower semicontinuous due to convexity and strong lower semicontinuity, while the term involving $f_{\e}$ is continuous with respect to weak convergence in $W^{1,1}(D)^m$ due to the Sobolev embedding. We thus proved the existence of a minimizer. Next, let us show that minimizers are characterized by
\begin{equation*}
	\int_D \partial_{\xi}W(\w,\tfrac{x}{\e},\nabla u_{\e}(x))\nabla\phi(x)- f_{\e}(x)\cdot\phi(x)\dx\begin{cases}
		=0 &\mbox{if $\phi\in W_0^{1,\infty}(D)^m$,}
		\\
		\geq 0 &\mbox{if $\phi\in W_0^{1,1}(D)^m$ and $F_{\e}(\w,u+\phi,D)<+\infty$,}
	\end{cases}
\end{equation*}
which proves i). Assume first that $u_{\e}$ satisfies the above system. Then, due to convexity of $W$, for any $\phi\in W_0^{1,1}(D)^m$ with $F_{\e}(\w,u+\phi,D)<+\infty$ we have that
\begin{align*}
&\quad F_{\e,f_{\e},g}(\w,u+\phi,D)-F_{\e,f_{\e},g}(\w,u,D)
\\
&\geq F_{\e,f_{\e},g}(\w,u+\phi,D)-F_{\e,f_{\e},g}(\w,u,D)
-\int_D \partial_{\xi}W(\w,\tfrac{x}{\e},\nabla u_{\e}(x))\nabla\phi(x)-f_{\e}(x)\cdot\phi(x)\dx
\\
&\geq \int_D \underbrace{W(\w,\tfrac{x}{\e},\nabla u_{\e}+\nabla\phi)-W(\w,\tfrac{x}{\e},\nabla u_{\e})-\partial_{\xi}W(\w,\tfrac{x}{\e},\nabla u_{\e}(x))\nabla\phi(x)}_{\geq 0}\dx\geq 0,
\end{align*}
which shows minimality since $F_{\e}$ and $F_{\e,f_{\e},g}$ have the same domain on $g+W^{1,1}_0$. 

For the reverse implication, we omit the dependence on $\w$ to reduce notation. Define the two proper convex functionals $G:W^{1,\varphi_{\e}}_0(D)^m\to [0,+\infty]$ and $F:W^{1,\varphi_{\e}}_0(D)^m\to \R$ by
\begin{equation*}
	G(v)=\int_D W(\tfrac{x}{\e},\nabla g+\nabla v)\dx,\quad F(v)=-\int_D f_{\e}\cdot (v+g)\dx.
\end{equation*}
Note that $F$ is real-valued and continuous since $W^{1,\varphi_{\e}}_0(D)^m$ embeds into $W^{1,1}_0(D)^m$. Moreover, $0\in {\rm dom}(G)$, so that we can apply the sum rule for the subdifferential of $G+F$ \cite[Theorem 6.1]{MNRT} to obtain
\begin{equation*}
	\partial (G+F)(v)=\partial G(v)+\partial F(v)=\partial G(v)-f_{\e}\quad\text{ for all }v\in {\rm dom}(G).
\end{equation*}
To find a suitable formula for the subdifferential of $G$, we first write $G=G_0\circ\nabla$, where the gradient $\nabla: W^{1,\varphi_{\e}}_0(D)^m\to L^{\varphi_{\e}}(D)^{m\times d}$ is a bounded linear map, and $G_0:L^{\varphi_{\e}}(D)^{m\times d}\to [0,+\infty]$ is defined by
\begin{equation*} 
	G_0(h)=\int_D W(\tfrac{x}{\e},\nabla g+h)\dx.
\end{equation*}
Arguing as in the proof of Proposition \ref{p.onWhom}, the boundedness of $\nabla g$ implies that $G_0$ is finite and continuous in $0$. Hence the chain rule for subdifferentials implies that $\partial G(u)=\nabla^*\partial G_0(\nabla u)$, where $\nabla^*$ denotes the adjoint operator of the gradient map. In particular, for any $v,\phi\in W^{1,\varphi_{\e}}_0(D)^m$ it holds that $\partial G(v)\phi=\partial G_0(\nabla v)\nabla \phi$, while by the results in Section \ref{s.orlicz} the subdifferential of $G_0$ at a function $h\in L^{\varphi_{\e}}(D)^{m\times d}$ is given by
\begin{align*}
	\partial G_0(h)&=\{\ell_a\in L^{\varphi_{\e}^*}(D)^{m\times d}:\,\ell_a=\partial_{\xi}W(\tfrac{x}{\e},\nabla g(x)+h(x))\}
	\\
	&\quad+\{\ell_s\in S^{\varphi_{\e}}(D)^{m\times d}:\,\ell_s(\phi-h)\leq 0\quad\text{ for all } \phi\in {\rm dom}(G_0)\}.
\end{align*}
The function $u=g+v$ being a solution of the minimization problem \eqref{eq:epsminproblem} is equivalent to the fact that 
\begin{equation*}
	0\in \partial (G+F)(v)=\nabla^*\partial G_0(\nabla v)-f_{\e},
\end{equation*}
which is equivalent to the facts that $\partial_{\xi}W(\tfrac{\cdot}{\e},\nabla u)\in L^{\varphi^*_{\e}}(D)^{m\times d}$ and that there exists $\ell_s\in S^{\varphi_{\e}}(D)^{m\times d}$ with $\ell_s(\cdot-\nabla v)\leq 0$ on ${\rm dom}(G_0)$, which combined satisfy 
\begin{equation*}
	\int_D \partial_{\xi}W(\tfrac{x}{\e},\nabla u)\nabla \phi-f_{\e}\cdot\phi\dx+\ell_s(\nabla \phi)=0\quad\text{ for all }\phi\in W_0^{1,\varphi_{\e}}(D)^m.
\end{equation*} 
If $\phi\in W^{1,\varphi_{\e}}_0(D)^{m\times d}$ is such that $F_{\e}(\w,u+\phi,D)<+\infty$, then $\nabla v+\nabla\phi\in {\rm dom}(G_0)$ and therefore $\ell_s(\nabla\phi)\leq 0$, which then yields
\begin{equation}\label{eq:variationalinequality}
	\int_D \partial_{\xi}W(\tfrac{x}{\e},\nabla u)\nabla \phi-\langle f_{\e},\phi\rangle\dx\geq 0.
\end{equation}
Moreover, as noted in Section \ref{s.orlicz} we have that $\ell_s(\nabla \phi)=0$ whenever $\phi\in W^{1,\infty}_0(D)^m$. This shows that any minimizer satisfies the system in Theorem \ref{thm:PDE} i). 

\vspace*{2.5mm}

ii) The fact that Assumption \ref{a.2} implies $W_{\rm hom}\in C^1$ was shown in Proposition \ref{p.onWhom}

\vspace*{2.5mm}

iii) The proof for the homogenized equation is analogous once one notes that $W_{\rm hom}$ satisfies the same assumptions as $W$ on a deterministic level. Indeed, we already know that $W_{\rm hom}$ is differentiable (Proposition \ref{p.onWhom}), while Assumption \ref{a.2} implies that
\begin{equation*}
	W_{\rm hom}(-\xi)=\mathbb{E}[\overbar W(\cdot,-\xi+\overbar {h}_{-\xi})]\geq\frac{1}{C}\mathbb{E}[W(\cdot,\xi-\overbar h_{-\xi})]-\mathbb{E}[\overbar\Lambda]\geq \frac{1}{C}W_{\rm hom}(\xi)-C,
\end{equation*}
so that also $W_{\rm hom}$ is almost even, which allows us to define the associated Sobolev-Orlicz space. Moreover, Assumption \ref{(A3)} holds since $W_{\rm hom}$ is convex, finite and superlinear at $+\infty$ (the last property ensures that $W_{\rm hom}^*$ is also convex and finite), while \ref{(A4)} or \ref{(A5)} (which were not needed in the proof of i) anyway) were proven in Lemma \ref{l.defW_hom}.

\vspace*{2.5mm}

iv) Under the additional assumption that there exists $s>1$ such that $su_{\e}\in {\rm dom}(F_{\e}(\w,\cdot,D))$ with $u_{\e}=g+v$ we know that $s(\nabla v+\nabla g)-\nabla g\in {\rm dom}(G_0)$. Then for any $\phi\in W_0^{1,\varphi_{\e}}(D)^m$ and $\delta>0$, convexity implies that 
\begin{align*}
	G_0(\nabla v+\delta\nabla\phi)&= G_0\left(\frac{1}{s}\left(s(\nabla v+\nabla g)-\nabla g\right)+\left(1-\frac{1}{s}\right)\left(-\nabla g+\left(1-\frac{1}{s}\right)^{-1}\delta\nabla\phi\right)\right)
	\\
	&\leq \frac{1}{s}G_0(s (\nabla v+\nabla g)-\nabla g)+\left(1-\frac{1}{s}\right)G_0\left(-\nabla g+\left(1-\frac{1}{s}\right)^{-1}\delta\nabla \phi\right).
\end{align*} 
Since $G_0$ is continuous in $-\nabla g$ with $G_0(-\nabla g)=\int_DW(\tfrac{x}{\e},0)\dx$, for $\delta$ small enough the right-hand side is finite and we can therefore conclude that $\ell_s(\delta\nabla\phi)\leq 0$ ($\ell_s$ as in the subdifferential representation in i)), which yields that $\ell_s(\nabla\phi)\leq 0$. Since this also holds for $-\varphi$, we conclude that $\ell_s(\nabla\phi)=0$ and therefore
\begin{equation*}
	\int_D\partial_{\xi}W(\tfrac{x}{\e},\nabla u)\nabla\phi-\langle f_{\e},\phi\rangle\dx=0\quad\text{ for all }\phi\in W_0^{1,\varphi_{\e}}(D)^m.	
\end{equation*}
The proof for the homogenized functional is the same.

\vspace*{2.5mm}

v) The convergence assertion in the strict convex case is a consequence of i) and iii) combined with Remark \ref{r.onconstrained} (ii).

\end{proof}

\subsection*{Acknowledgments}
The authors thank Thomas Ruf for the idea to encode superlinearity at infinity in the growth of the Fenchel-conjugate.

\appendix

\section{Convex envelopes of coercive radial functions}
\begin{lemma}\label{l.envelope}
Let $\ell:[0,+\infty)\to \R$ be coercive\footnote{That is, its sublevel sets are precompact.} and bounded from below. Then the convex envelope of $\R^k\ni \xi\mapsto L(\xi):= \ell(|\xi|)$ is given by the formula
\begin{equation*}
	\xi\mapsto\begin{cases}
		{\rm co}(\ell_{\rm lsc})(|\xi|) &\mbox{ if $|\xi|\geq r_{\rm max}$,}\\
		\ell_{\rm lsc}(r_{\rm max}) &\mbox{ if $|\xi|<r_{\rm max}$},
	\end{cases}
\end{equation*}
where $r_{\rm max}:=\max\{r\in [0,+\infty):\,\ell_{\rm lsc}(r)=\min_{t\in [0,+\infty)}\ell_{\rm lsc}(t)\}$ is the largest minimizer of the lower semicontinuous envelope $\ell_{\rm lsc}$ of $\ell$ and ${\rm co}(\ell_{\rm lsc})$ denotes the convex envelope of the function $\ell_{\rm lsc}$. In particular, it is of the form $\xi\mapsto \widetilde{\ell}(|\xi|)$ for some convex, monotone function $\widetilde{\ell}$.
\end{lemma}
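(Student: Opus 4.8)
The plan is to show that the function $g\colon\R^k\to\R$ given by the right-hand side of the claimed formula is simultaneously convex, a minorant of $L(\xi):=\ell(|\xi|)$, and larger than every convex minorant of $L$; these three facts identify $g$ as the convex envelope of $L$ (and, since $g$ will be continuous, also as the biconjugate $L^{**}$).

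First I would reduce to lower semicontinuous $\ell$. One checks that the lsc envelope of $L$ is $\xi\mapsto\ell_{\rm lsc}(|\xi|)$: for $\xi\neq0$ any radial sequence $\zeta_n=r_n\xi/|\xi|\to\xi$ with $r_n\to|\xi|$ realizing $\ell_{\rm lsc}(|\xi|)$ gives ``$\le$'', while ``$\ge$'' is immediate from $|\zeta|\to|\xi|$; the case $\xi=0$ is analogous. Moreover ${\rm co}(L)={\rm co}(L_{\rm lsc})$, because every real-valued convex minorant of $L$ is continuous, hence lsc, hence a minorant of $L_{\rm lsc}$, while conversely $L_{\rm lsc}\le L$. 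Since $\ell_{\rm lsc}$ is again coercive and bounded below and $(\ell_{\rm lsc})_{\rm lsc}=\ell_{\rm lsc}$, it suffices to prove the formula when $\ell$ itself is lsc; then $m:=\inf_{[0,\infty)}\ell$ is attained on a compact set, so $r_{\rm max}$ is well defined and finite.

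Next I would set up the one-dimensional picture. Let $\bar\ell:={\rm co}(\ell)$ be the convex envelope of $\ell$ on $[0,\infty)$. From $m\le\bar\ell\le\ell$ we get $\bar\ell(r_{\rm max})=m=\min\bar\ell$, so $\bar\ell$ is nondecreasing on $[r_{\rm max},\infty)$, and therefore
\[
\tilde\ell(r):=\begin{cases}m,& 0\le r\le r_{\rm max},\\ \bar\ell(r),& r\ge r_{\rm max},\end{cases}
\]
is convex and nondecreasing on $[0,\infty)$ (affine, then convex, with left derivative $0$ not exceeding the right derivative at $r_{\rm max}$), and satisfies $\tilde\ell\le\bar\ell\le\ell$. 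Hence $g(\xi)=\tilde\ell(|\xi|)$, being a nondecreasing convex function of the convex function $|\cdot|$, is convex on $\R^k$, continuous, of the asserted form, and bounded above by $L$. This already gives the last sentence of the statement and two of the three required properties.

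The remaining, and main, point is that $g\ge h$ for every convex $h\colon\R^k\to\R$ with $h\le L$; this is where the radial symmetry does the work. On the closed ball $\{|\xi|\le r_{\rm max}\}$: writing $\xi=|\xi|e$ with a unit vector $e$ (arbitrary if $\xi=0$), the point $\xi$ is a convex combination of the antipodal points $\pm r_{\rm max}e$, both of which lie on $\{|\cdot|=r_{\rm max}\}$ where $h\le L=m$; hence $h(\xi)\le m=g(\xi)$. On $\{|\xi|\ge r_{\rm max}\}$ with $\rho:=|\xi|>0$: given $\varepsilon>0$, choose $t_i\ge0$ and weights $\lambda_i\ge0$, $\sum_i\lambda_i=1$, with $\sum_i\lambda_i t_i=\rho$ and $\sum_i\lambda_i\ell(t_i)\le\bar\ell(\rho)+\varepsilon$ (by the standard representation of the convex envelope on $[0,\infty)$); put $\xi_i:=t_i\xi/\rho$, so that $|\xi_i|=t_i$ and $\sum_i\lambda_i\xi_i=\xi$. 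Convexity of $h$ together with $h\le L$ yields $h(\xi)\le\sum_i\lambda_i\ell(t_i)\le\bar\ell(\rho)+\varepsilon$, and $\varepsilon\downarrow0$ gives $h(\xi)\le\bar\ell(\rho)=g(\xi)$ (the leftover case $\rho=0$ can only occur when $r_{\rm max}=0$, and then $\bar\ell(0)=\ell(0)=m$ and $h(0)\le\ell(0)$). Combining the three properties, $g$ is the largest convex minorant of $L$, i.e.\ $g={\rm co}(L)$, which is the asserted formula. I expect the two delicate spots to be the reduction to lsc $\ell$ and the realization of the value $\bar\ell(\rho)$ of the one-dimensional convex envelope by collinear competitors $\xi_i$ on the ray through $\xi$; both are elementary once the radial structure is exploited.
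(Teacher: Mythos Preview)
Your proof is correct and follows essentially the same route as the paper: reduce to lower semicontinuous $\ell$, define $\tilde\ell$ by truncating ${\rm co}(\ell)$ at its minimum level on $[0,r_{\rm max}]$, verify that $\tilde\ell$ is convex and nondecreasing so that $\tilde\ell(|\cdot|)$ is a convex minorant of $L$, and then show maximality via the antipodal-point argument inside $B_{r_{\rm max}}$ and a ray argument outside. The only cosmetic difference is that for $|\xi|\ge r_{\rm max}$ the paper restricts ${\rm co}(L)$ to the line $\R v$ and invokes the definition of ${\rm co}(\ell)$ directly, whereas you unpack the same step via an approximate Carath\'eodory representation with collinear competitors; the two formulations are equivalent.
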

\begin{proof}
First, we observe that $L$ is bounded from below and ${\rm co}(L)$ is finite, hence continuous. Therefore, by \cite[Remark 4.93]{FoLe} we have ${\rm co}(L)={\rm co}(L_{\rm lsc})$. The lower semicontinuous envelope of $L$ is given by $L_{\rm lsc}(\xi)=\ell_{\rm lsc}(|\xi|)$. By \cite[Theorem 3.8]{DM} the function $\ell_{\rm lsc}$ is still coercive. Moreover, it is also bounded from below. Hence we can assume without loss of generality that $\ell$ is already lower semicontinuous. Note that $r_{\rm max}$ exists due to the lower semicontinuity and coercivity of $\ell$. Define the function $\widetilde{\ell}:[0,+\infty)\to \R$ by 
\begin{equation*}
	\widetilde{\ell}(r)=\begin{cases}
		{\rm co}(\ell)(r) &\mbox{ if $r\geq r_{\rm max}$,}\\
		\ell(r_{\rm max}) &\mbox{ if $r<r_{\rm max}$}.
	\end{cases}
\end{equation*}
We argue that $\widetilde{\ell}$ is monotone and convex. Since constant functions are convex, we know that $\ell(r_{\rm max})\leq {\rm co}(\ell)(r)\leq \ell(r)$ for all $r\in [0,+\infty]$, which implies that $\ell(r_{\rm max})={\rm co}(\ell)(r_{\max})$. We next fix $r_1<r_2$ with $r_1,r_2\in [r_{\rm max},+\infty)$ and write $r_1=tr_{\rm max}+(1-t)r_2$ for some $t\in [0,1]$. Then by convexity of ${\rm co}(\ell)$ we have that
\begin{align*}
{\rm co}(\ell)(r_1)&\leq t\,{\rm co}(\ell)(r_{\rm max})+(1-t)\,{\rm co}(\ell)(r_2)=t\,\ell(r_{\rm max})+(1-t)\,{\rm co}(\ell)(r_2)
\\
&\leq t\,{\rm co}(\ell)(r_2)+(1-t)\,{\rm co}(\ell)(r_2)={\rm co}(\ell)(r_2), 
\end{align*} 
which shows the monotonicity of $\widetilde{\ell}$. To prove convexity of $\widetilde{\ell}$, it suffices to consider the case when $r_1\leq r_{\rm max}<r_2$ and $t\in [0,1]$ (the other cases being obvious). Then by the monotonicity of $\widetilde{\ell}$ we have that
\begin{align*}
\widetilde{\ell}(tr_1+(1-t)r_2)&\leq \widetilde{\ell}(t r_{\rm max}+(1-t)r_2)={\rm co}(\ell)(tr_{\rm max}+(1-t)r_2)
\\
&\leq t\,\underbrace{{\rm co}(\ell)(r_{\rm max})}_{=\ell(r_{\rm max})}+(1-t)\,{\rm co}(\ell)(r_2)=t\widetilde{\ell}(r_1)+(1-t)\widetilde{\ell}(r_2).
\end{align*}
Due to monotonicity and convexity of $\widetilde{\ell}$, the map $\xi\mapsto \widetilde{\ell}(|\xi|)$ is convex on $\R^k$ and therefore ${\rm co}(L)(\xi)\geq \widetilde{\ell}(|\xi|)$. It remains to show the reverse inequality. To this end, note that for every unit vector $v\in\R^k$ the restriction of ${\rm co}(L)$ to $\R v$ is also convex and satisfies ${\rm co}(L)(rv)\leq L(rv)=\ell(r)$ for all $r\in [0,+\infty)$. Hence ${\rm co}(L)(rv)\leq {\rm co}(\ell)(r)$ for all $r\in [0,+\infty]$. As $v$ was arbitrary, we deduce that ${\rm co}(L)(\xi)\leq {\rm co}(\ell)(|\xi|)$ for all $\xi\in\R^k$. However, we need to improve this bound for $|\xi|<r_{\rm max}$. Let $\xi\in\R^k$ be such that $|\xi|<r_{\rm max}$ and consider the line through $0$ and $\xi$, which intersects $\partial B_{r_{\rm max}}$ in two points $\xi_1$ and $\xi_2$ (take any line if $\xi=0$). Then we can write $\xi=t\xi_1+(1-t)\xi_2$ for some $t\in [0,1]$ and hence
\begin{equation*}
{\rm co}(L)(\xi)\leq t\, {\rm co}(L)(\xi_1)+(1-t)\,{\rm co}(L)(\xi_2)\leq t \,{\rm co}(\ell)(r_{\rm max})+(1-t)\,{\rm co}(\ell)(r_{\rm max})=\ell(r_{\rm max})=\widetilde{\ell}(|\xi|),
\end{equation*}
which proves the claim.
\end{proof}

\section{Measurability}\label{app:2}

Here we prove the following general measurability result that in particular covers our integrand $W$ satisfying Assumption \ref{a.1}.
\begin{lemma}\label{l.measurable}
	Let $W:\Omega\times\R^d\times\R^{m\times d}\to [0,+\infty]$ be $\mathcal{F}\otimes\mathcal{L}^d\times\mathcal{B}^{m\times d}$-measurable, $\xi\in\R^{m\times d}$ and $O\subset\R^d$ be a bounded, open set. If the function
	\begin{equation*}
		\mu_{\xi}(\w,O)=\inf_{u\in W^{1,1}_0(O)^m}\int_OW(\w,x,\xi+\nabla u)\dx
	\end{equation*}
	is almost surely finite, then it is $\mathcal{F}$-measurable. Moreover, if for almost every $\w\in\Omega$ there exists a minimizer in $W^{1,1}_0(O)^m$, then there exists a measurable function $u:\Omega\to W^{1,1}_0(O)^m$ such that almost surely
	\begin{equation*}
		\mu_{\xi}(\w,O)=\int_O W(\w,x,\xi+\nabla u(\w))\dx.
	\end{equation*}
\end{lemma}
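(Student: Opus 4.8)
\textbf{Plan for proving Lemma~\ref{l.measurable}.}

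The plan is to reduce the infimum to a countable one and then invoke a measurable-selection argument for the minimizer. First, I would fix a countable set $\mathcal{D}\subset W^{1,1}_0(O)^m$ that is dense in the $W^{1,1}$-norm; a natural choice is finite $\mathbb{Q}$-linear combinations of a countable dense subset of $C_c^\infty(O)^m$. For each fixed $u\in W^{1,1}_0(O)^m$ the map $\w\mapsto \int_O W(\w,x,\xi+\nabla u)\dx$ is $\mathcal{F}$-measurable: indeed $(\w,x)\mapsto W(\w,x,\xi+\nabla u(x))$ is $\mathcal{F}\otimes\mathcal{L}^d$-measurable by composing the jointly measurable $W$ with the measurable map $(\w,x)\mapsto(\w,x,\xi+\nabla u(x))$, and then measurability of the (possibly $+\infty$-valued) integral in $\w$ follows from Tonelli's theorem. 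Hence $\w\mapsto \inf_{u\in\mathcal{D}}\int_O W(\w,x,\xi+\nabla u)\dx$ is $\mathcal{F}$-measurable as a countable infimum. The main point is therefore to show that this countable infimum equals $\mu_\xi(\w,O)$ for a.e.\ $\w$, i.e.\ that $\mathcal{D}$ is ``dense in energy''. This is precisely where the structure of Assumption~\ref{a.1} enters: given any $u\in W^{1,1}_0(O)^m$ with $\int_O W(\w,x,\xi+\nabla u)\dx<+\infty$, pick $u_k\in\mathcal{D}$ with $u_k\to u$ in $W^{1,1}_0(O)^m$; one wants $\limsup_k\int_O W(\w,x,\xi+\nabla u_k)\dx\le \int_O W(\w,x,\xi+\nabla u)\dx$. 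This does not follow from Fatou directly (wrong direction), so I would instead use the convexity of $W$ in the last variable together with the local bound $\sup_{|\eta|\le r}\overbar W(\cdot,\eta)\in L^1(\Omega)$ from \ref{(A3)}: writing $\nabla u_k=\nabla u+(\nabla u_k-\nabla u)$ and convexifying, one reduces the error to integrals of $W$ over sets of small measure against a perturbation that is small in $L^1$; after passing to a subsequence so that $\nabla u_k\to\nabla u$ a.e.\ and using equi-integrability (Vitali) of the bounding terms, the excess vanishes. Alternatively, and more cleanly, one mollifies/truncates as in the standard density-in-energy arguments (cf.\ \cite[Lemma~3.6]{Mue87}, \cite[Chapter X]{EkTe}) which the paper already invokes elsewhere; this handles the convex integrand with the integrability condition \ref{(A3)}. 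I expect this density-in-energy step to be the main obstacle, as it is the only non-formal part.

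For the second assertion, once $\mu_\xi(\cdot,O)$ is measurable and a minimizer exists a.e., I would apply a measurable-selection theorem. Consider the multifunction $\w\mapsto \Phi(\w):=\{u\in W^{1,1}_0(O)^m:\int_O W(\w,x,\xi+\nabla u)\dx\le \mu_\xi(\w,O)\}$, which is a.s.\ nonempty and, by weak lower semicontinuity of the convex integral functional plus the coercivity coming from \eqref{eq:quantitativelineargrowth} in \ref{(A3)}, is a weakly closed (hence closed) subset of the separable reflexive-like setting; more practically one restricts to a fixed ball in $W^{1,1}_0(O)^m$ large enough to contain a minimizer (its radius is controlled by $\mu_\xi(\w,O)$ via the superlinear bound, so can be taken locally bounded in $\w$), on which the weak topology is metrizable and Polish. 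The graph of $\Phi$ is $\mathcal{F}\otimes\mathcal{B}$-measurable because $(\w,u)\mapsto \int_O W(\w,x,\xi+\nabla u)\dx$ is measurable (it is the integral of a Carathéodory-type—convex-in-$u$, measurable-in-$\w$—integrand, so jointly measurable by the usual approximation via countable suprema of affine functions as in the proof of Lemma~\ref{l.truncateW}) and $\mu_\xi(\cdot,O)$ is measurable. By the Kuratowski–Ryll-Nardzewski selection theorem (or the Aumann–von Neumann theorem together with completeness of $(\Omega,\mathcal{F},\mathbb{P})$) there is an $\mathcal{F}$-measurable selection $\w\mapsto u(\w)\in\Phi(\w)$, which by construction satisfies $\mu_\xi(\w,O)=\int_O W(\w,x,\xi+\nabla u(\w))\dx$ almost surely. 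Measurability of $\w\mapsto u(\w)$ as a map into $W^{1,1}_0(O)^m$ with the norm topology follows since on the bounded set in question weak and norm Borel $\sigma$-algebras coincide (separability). This completes the proof modulo the density-in-energy lemma flagged above, which I regard as the crux.
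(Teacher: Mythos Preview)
Your approach diverges from the paper's in a way that creates a genuine gap. The lemma as stated assumes only joint measurability of $W$---no convexity, no integrability bounds from \ref{(A3)}---so importing Assumption~\ref{a.1} means you are proving a strictly weaker statement. Even granting Assumption~\ref{a.1}, the density-in-energy step you correctly flag as the crux is not straightforward: the approximation results you cite (\cite[Lemma~3.6]{Mue87}, \cite[Chapter~X]{EkTe}, and the paper's own Lemma~\ref{l.EkTemApprox}) need additional structure such as \ref{(A4)} or \ref{(A5)}, not merely \ref{(A2)}--\ref{(A3)}. The convexity trick you sketch---splitting $\nabla u_k=\nabla u+(\nabla u_k-\nabla u)$---produces terms like $W(\w,x,(\xi+\nabla u)/t)$ whose integrability is not guaranteed by that of $W(\w,x,\xi+\nabla u)$ for general convex $W$, so the argument does not close.

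The paper avoids this entirely by first proving that the functional $(\w,u)\mapsto \int_O W(\w,x,\xi+\nabla u)\dx$ is $\mathcal{F}\otimes\mathcal{B}(W^{1,1}_0(O)^m)$-measurable via a Monotone Class Theorem (starting from integrands of the product form $h_1(\w)h_2(x)g(\xi)$ with $g$ continuous, where joint measurability is a Carath\'eodory argument), and then applying the measurable projection theorem to obtain $\mathcal{F}$-measurability of the infimum. This works for any nonnegative jointly measurable $W$, matching the lemma's hypotheses, and makes the selection part immediate: the graph of the minimizer multifunction is measurable by the joint measurability already established, and Aumann's theorem applies. Note also that your joint-measurability argument for the selection part (``convex-in-$u$, measurable-in-$\w$, hence jointly measurable'') is not complete as stated: lower semicontinuity in $u$ is not enough to invoke the standard Carath\'eodory result, so you would still need an argument like the paper's Monotone Class reduction. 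In fact, once you have joint measurability of the functional---which you essentially need for part two anyway---you can use it together with the projection theorem for part one and bypass the density-in-energy issue altogether.
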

\begin{proof}
	Redefining $W(\w,x,\xi)=|\xi|^2$ on the set of $\w$, where $\mu_{\xi}(\w,O)$ is not finite (and when no minimizer exists in the second case), we can assume without loss of generality that all properties holds for all $\w\in\Omega$. Note that this is possible since the modified integrand is still jointly measurable due to the completeness of the probability space. We first prove that the functional $(\w,u)\mapsto \int_OW(\w,x,\xi+\nabla u)\dx$ is $\mathcal{F}\otimes\mathcal{B}(W_0^{1,1}(O)^m)$-measurable. By truncation, we can assume without loss of generality that $W$ is bounded. If $W$ is additionally continuous in the third variable, then the joint measurability is a consequence of Fubini's theorem (which shows measurability in $\w$) and continuity of the functional with respect to strong convergence in $W^{1,1}_0(O)^m$. Indeed, joint measurability then holds due to the separability of $W^{1,1}_0(O)^m$, which ensures joint measurability of Carath\'eodory-functions.
	
	To remove the continuity assumption on $W$, we use a Monotone Class Theorem for functions. To this end consider the classes of functions defined as
	\begin{align*}
		\mathcal{C}:=&\{h:\Omega\times \R^d\times \R^{m\times d}\to \R,\,h(\w,x,\xi)=h_1(\w)h_2(x)g(\xi),\,h_1,h_2,g\text{ bounded, }
		\\
		&\quad\quad h_1\text{ $\mathcal{F}$-measurable},\,h_2\text{ $\mathcal{L}^d$-measurable},\,g\text{ continuous}\}
	\end{align*}
	and 
	\begin{align*}
		\mathcal{R}:=&\{h:\Omega\times \R^d\times \R^{m\times d}\to \R,\,h\text{ bounded and $\F\otimes\mathcal{L}^d\otimes\mathcal{B}^{m\times d}$-measurable, }
		\\
		&\qquad \text{such that}\; (\w,u)\mapsto\int_Oh(\w,x,\xi+\nabla u)\dx \text{ is  $\F\otimes\mathcal{B}(W^{1,1}_0(O)^m)$-measurable}\}.
	\end{align*}
	Note that if $h\in \mathcal{C}$, then $h$ is $\F\otimes\mathcal{L}^d\otimes\mathcal{B}^{m\times d}$-measurable, thus the argument above shows that $\mathcal{C}\subset\mathcal{R}$. Moreover, $\mathcal{R}$ contains the constant functions, is a vector space of bounded functions, and is closed under uniformly bounded, increasing limits. Finally, the set $\mathcal{C}$ is closed under multiplication. Thus \cite[Chapter I, Theorem 21]{DeMe} ensures that $\mathcal{R}$ contains all bounded functions that are measurable with respect to the $\sigma$-algebra generated by $\mathcal{C}$. By definition of $\mathcal{C}$  this $\sigma$-algebra coincides with $\mathcal{F}\otimes \mathcal{L}^d\otimes\mathcal{B}^{m\times d}$. 
	
	Having the joint measurability of $(\w,u)\mapsto \int_OW(\w,x,\xi+\nabla u)\dx$ at hand, the measurability of the optimal value function  $\mu_{\xi}(\cdot,O)$ follows from the measurable projection theorem. Indeed, for every $t\in\R$ we know from joint measurability that
	\begin{equation}\label{c:ultima}
		\left\{(\w,\varphi)\in \Omega\times W^{1,1}_0(O)^m:\,\int_OW(\w,x,\xi+\nabla u)\dx<t\right\}\in\F\otimes\mathcal{B}(W_0^{1,1}(O)^m).
	\end{equation}
	By assumption $(\Omega, \F, \mathbb P)$ is a complete probability space. Since $W_0^{1,1}(O,\R^m)$ is a complete, separable, metric space, the projection theorem \cite[Theorem 1.136]{FoLe} yields the $\F$-measurability of the projection of \eqref{c:ultima} onto $\Omega$. Therefore we have
	\begin{equation*}
		\left\{\w\in \Omega:\,\inf_{u\in W_0^{1,1}(O)^m}\int_OW(\w,x,\xi+\nabla u)\dx=\mu_{\xi}(\w,O)<t\right\}\in\mathcal{F},
	\end{equation*} 
	which proves the $\F$-measurability of $\mu_{\xi}(\cdot,A)$. To show the existence of a measurable selection of minimizers, define the multi-valued map $\Gamma:\Omega\rightrightarrows W^{1,1}_0(O)^m$ by
	\begin{equation*}
		\Gamma(\w)=\left\{u\in W^{1,1}_0(O)^m:\,\int_OW(\w,x,\xi+\nabla u)\dx=\mu_{\xi}(\w,O)\right\}.
	\end{equation*}
	By the measurability of $\mu_{\xi}(\cdot,O)$ and the joint measurability of the functional, we see that the graph of the this multi-function is $\mathcal{F}\otimes\mathcal{B}(W_0^{1,1}(O)^m)$-measurable. Due to the completeness of $(\Omega,\mathcal{F},\mathbb{P})$ and the separability and completeness of $W^{1,1}_0(O)^m$,  we can now apply Aumann's measurable selection theorem \cite[Theorem 6.10]{FoLe} and conclude the proof.
\end{proof}

\section{Approximation results in the vectorial case}
In this section we extend \cite[Proposition 2.6, Chapter X]{EkTe} to the vectorial setting, a result that has already been used in the literature, but the proof in \cite{EkTe} is only valid for scalar functions. For (probably) technical reasons, we need to assume in addition the analogue of \ref{(A4)} in the homogeneous setting or the lower bound $W(\cdot)\geq |\cdot|^{p}$ for some $p>d-1$. Let us emphasize that in the literature the result was used under the stronger assumption $p>d$.
\begin{lemma}\label{l.EkTemApprox}
Let $W:\R^{m\times d}\to [0,+\infty)$ be convex and $D\subset\R^d$ be a bounded, open set with Lipschitz boundary. Assume that one of the following assumptions holds true:
\begin{itemize}
\item [(1)] there exists $C<+\infty$ such that for all $\xi\in\R^{m\times d}$ and all $\widetilde{\xi}\in\R^{m\times d}$ with $e_j^T(\xi-\widetilde{\xi})\in \{0,e_j^T\xi\}$ for all $1\leq j\leq m$ it holds that
\begin{equation}\label{eq:A4appendix}
W(\widetilde{\xi})\leq C(W(\xi)+1).
\end{equation}
\item [(2)]  there exists $p>d-1$ such that
\begin{equation}\label{eq:pgrowthappendix}
W(\xi)\geq |\xi|^p\qquad\mbox{for all $\xi\in\R^{m\times d}$.}
\end{equation}
\end{itemize}
Then for any $u\in W^{1,1}_0(D)^m$ there exists a sequence $u_n\in C^{\infty}_c(D)^m$ such that $u_n\to u$ strongly in $W^{1,1}(D)^m$ and
\begin{equation*}
\lim_{n\to +\infty}\int_D W(\nabla u_n(x))\dx=\int_DW(\nabla u(x))\dx.
\end{equation*}
\end{lemma}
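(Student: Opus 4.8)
The statement is a vectorial version of the classical approximation-in-energy result of Ekeland--Temam, so the plan is to follow the scalar proof from \cite[Chapter X, Proposition 2.6]{EkTe} and isolate exactly the place where scalarity is used, replacing it by either \eqref{eq:A4appendix} or \eqref{eq:pgrowthappendix}. First I would reduce to a model situation: by a partition of unity subordinate to a finite cover of $\overline D$ by balls and by the Lipschitz charts flattening $\partial D$, it suffices to treat the case where $u$ is supported near a piece of the boundary that has been straightened to a piece of a hyperplane, and then one translates $u$ inwards (the classical trick $u_t(x):=u(x+t e_d)$, or rather a local version of it using a bi-Lipschitz deformation pushing $\mathrm{supp}\,u$ into the interior) to obtain, for small $t>0$, functions $u_t\in W^{1,1}(D)^m$ with compact support in $D$, $u_t\to u$ in $W^{1,1}$, and $\int_D W(\nabla u_t)\dx\to\int_D W(\nabla u)\dx$ by continuity of translations in $L^1$ combined with the convexity/lower semicontinuity of the functional (the limsup is handled by the change of variables, the liminf by weak lower semicontinuity). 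Once compact support is achieved, a standard mollification $u_t * \rho_\varepsilon\in C_c^\infty(D)^m$ converges in $W^{1,1}$ and, by Jensen's inequality plus the continuity of $\xi\mapsto\int_D W(\nabla u_t * \rho_\varepsilon)$, the energies converge as well; a diagonal argument finishes the proof. The convexity of $W$ alone already gives $\limsup_\varepsilon \int_D W(\nabla(u_t*\rho_\varepsilon))\dx\le \int_D W(\nabla u_t)\dx$ via Jensen, and the matching liminf is weak lower semicontinuity, so the mollification step needs nothing beyond convexity.

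\textbf{Where scalarity enters.} The genuinely delicate point is the \emph{inward translation near the boundary}: after flattening, one must glue the translated function back to $0$ outside a neighborhood of $\mathrm{supp}\,u$, and the gluing (via a cutoff $\eta$) produces a term of the form $\nabla\eta\otimes u_t$ that a priori is not controlled by $W(\nabla u)$ in $L^1$ unless $W$ has polynomial growth. In the scalar case \cite{EkTe} circumvents this by truncating $u$ at levels $\pm s$ and using that $W(\nabla T_s u)\le W(\nabla u)$ pointwise for a \emph{scalar} monotone-type argument; in the vectorial case the truncation $T_s$ acts componentwise and one only has $e_j^T\nabla T_s(u)\in\{0,e_j^T\nabla u\}$, so the pointwise bound must be replaced by \eqref{eq:A4appendix}, exactly the hypothesis (1). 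Concretely, under (1) the plan is: truncate componentwise to get $T_s u\in W^{1,1}_0(D)^m\cap L^\infty$ with $\int_D W(\nabla T_s u)\dx\le C\int_D W(\nabla u)\dx + C|D|$ and $T_s u\to u$ in energy as $s\to\infty$ (the latter by dominated convergence since $\nabla T_s u\to\nabla u$ a.e. and $W(\nabla T_s u)\le C(W(\nabla u)+1)$ on the set $\{|u|_\infty\ge s\}$, whose measure tends to $0$, while on $\{|u|_\infty<s\}$ one has $\nabla T_s u=\nabla u$); then for the bounded function $T_s u$ the inward translation and cutoff are harmless because $u$ is bounded, so $\nabla\eta\otimes T_s u$ is bounded and $W$ of it is integrable by finiteness of $W$; finally let $s\to\infty$. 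Under hypothesis (2), instead, the gluing error is handled as in \cite{DG_unbounded,Mue87}: the $p$-coercivity $W(\xi)\ge|\xi|^p$ with $p>d-1$ lets one invoke Lemma~\ref{L:optim} to choose the cutoff $\eta$ on a thin spherical shell so that $\|\nabla\eta\otimes u\|_{L^\infty}$ is controlled by $\rho\,\|\nabla u\|_{L^p(\text{shell})}$ plus a lower-order term, and since $\|\nabla u\|^p_{L^p(\text{shell})}\le\int_{\text{shell}}W(\nabla u)\dx\to0$ as the shell shrinks, the error is negligible.

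\textbf{Main obstacle.} The crux is thus the boundary-gluing estimate, and I expect the bookkeeping in the Lipschitz-chart reduction to be the most technical (but routine) part, while the conceptual obstacle is genuinely the need for \emph{one} of the two structural hypotheses: without either, $\nabla\eta\otimes u$ cannot be absorbed and the energies of the approximants may jump. I would therefore organize the write-up as: (i) reduction to $u$ with support near a flattened boundary piece via partition of unity and bi-Lipschitz charts, noting $W(A\nabla v)$ is still convex in $\nabla v$ so the change of variables preserves the structure up to constants; (ii) the case of bounded $u$ (inward translation $+$ cutoff $+$ mollification), where only convexity and finiteness of $W$ are used; (iii) reduction of the general case to the bounded case, splitting into sub-case (1) via componentwise truncation and \eqref{eq:A4appendix}, and sub-case (2) via Lemma~\ref{L:optim} and \eqref{eq:pgrowthappendix}; (iv) a final diagonal extraction. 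One should also remark, as the paper does elsewhere, that the smoothing must be done so that the approximants land in $C_c^\infty(D)^m$ and not merely $C^\infty(D)^m\cap W^{1,1}_0$, which is automatic once compact support in $D$ has been secured before mollifying.
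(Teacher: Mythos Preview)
Your overall architecture (push the support of $u$ into $D$ by local deformations glued with a partition of unity, then mollify) and your treatment of case (1) via componentwise truncation to reduce to bounded $u$ both match the paper's proof.

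In case (2), however, your plan contains a genuine gap. You propose to apply Lemma~\ref{L:optim} to $u$ (or $u_t$) itself and then shrink the shell, using that $\int_{\text{shell}}|\nabla u|^p\to 0$. But the bound in Lemma~\ref{L:optim} carries a prefactor $\delta^{-1-1/p}$ (with $\delta$ the relative shell width), and for a generic $u\in W^{1,p}$ the shell integral is only $O(\delta)$, so the right-hand side of \eqref{L:optim:claim} diverges as $\delta\downarrow 0$. The paper's remedy is different: it keeps $\delta$ \emph{fixed} and applies Lemma~\ref{L:optim} to the \emph{differences} $u_{k,j}-u$, where the $u_{k,j}(x)=\rho_k\,u(z_j+\rho_k^{-1}(x-z_j))$ are dilations of $u$ about carefully chosen centres $z_j$ (no bi-Lipschitz flattening is used, which keeps $W$ autonomous). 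Since $u_{k,j}\to u$ in $W^{1,p}$ as $k\to\infty$, estimate \eqref{L:optim:claim} gives $\|\nabla\eta_{k,i}\otimes(u_{k,j}-u)\|_{L^\infty}\to 0$. The algebraic point making this work is the identity $\sum_i\nabla\varphi_{k,i}=0$ on $D$, which turns the gluing error in $\nabla\bigl(\sum_i\varphi_{k,i}u_{k,i}\bigr)$ into $\sum_i\nabla\varphi_{k,i}\otimes(u_{k,i}-u)$ rather than $\sum_i\nabla\varphi_{k,i}\otimes u_{k,i}$; to exploit it one must build a $k$-dependent Lipschitz partition of unity out of the cut-offs $\eta_{k,i}$ supplied by Lemma~\ref{L:optim} (the paper does this following \cite{Koch22}), a step absent from your outline.

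There is a related structural problem with your step (i). Reducing first to $u$ supported in a single flattened chart via a partition of unity already requires $\int_D W(\nabla(\varphi_i u))\,\mathrm{d}x<\infty$, hence integrability of $W(\nabla\varphi_i\otimes u)$ --- precisely the uncontrolled quantity. The paper avoids this circularity by performing the local deformations and the partition of unity simultaneously, so that only differences $u_{k,i}-u$ ever appear in the cross terms.
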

\begin{proof}
It suffices to treat the case when 
\begin{equation*}
\int_D W(\nabla u)\dx<+\infty,
\end{equation*}
since otherwise the statement reduces to well-known density results in $W^{1,1}_0$. We only prove that it is sufficient to consider $u$ with compact support in $D$. Once, this is established the statement follows by a routine regularization by convolution (see \cite{EkTe} or \cite[Lemma~3.6]{Mue87}).


We first treat the more involved case \eqref{eq:pgrowthappendix} and explain the necessary modifications/simplifications for \eqref{eq:A4appendix} afterwards. Let $u\in W_0^{1,p}(D)^m$ be given. First extend $u$ to be zero outside $D$, so that $u\in W^{1,p}(\R^d)^m$. For every $x\in D$ we consider a ball $B_{r_x}(x)\subset\subset D$, while for $x\in\partial D$ the Lipschitz regularity of $\partial D$ implies that (up to an Euclidean motion) there exists a cylinder $C_{x}= B^{d-1}_{r'_x}(0)\times (-h_x,h_x)$ with $x\in C_x$ and 
\begin{equation}\label{eq:boundarydescription}
D\cap C_{x}=\{(y',y_d)\in C_x:\,y_d<\psi_{x}(y')\}
\end{equation}
for some Lipschitz-function $\psi_{x}:B^{d-1}_{r'_x}(0)\to(-h_x,h_x)$. Up to reducing $r_{x'}$, we may assume that 
\begin{equation}\label{eq:nottouching}
\psi_x(B^{d-1}_{r'_x}(0))\subset\subset (-h_x,h_x).
\end{equation}
Choose then $r_x<\min\{r'_x,h_x\}$ such that $B_{r_x}(x)\subset\subset C_x$ and such that the Lipschitz-constant $L_x$ of $\psi_x$ satisfies
\begin{equation}\label{eq:rx_small}
0<2L_xr_x\leq h_x+\inf_{|y'|\leq r'_x}\psi_x(y'),
\end{equation}
which is possible due to \eqref{eq:nottouching}. Due to the compactness of $\overline{D}$, we find a finite family of above balls $B_i=B_{r_{x_i}}(x_i)$ ($1\leq i\leq N$) that cover $\overline{D}$. Let us emphasize that these balls will be fixed throughout the rest of the proof, so we omit the dependence on the radii $r_{x_i}$ or the number $N$ of certain quantities. For interior points $x_i\in D$, we define $z_i=x_i$, while for points $x_i\in\partial D$ we choose $z_i\in\R^d$ such that in the local coordinates we have $z_i=(0,-h_{x_i})$ (i.e., at the bottom of the local graph representation). Now let $0<\rho_k<1$ be such that $\lim_k\rho_k=1$ and for any $1\leq i\leq N$ we define
\begin{equation*}
u_{k,i}(x)=\rho_ku(z_{i}+\tfrac{1}{\rho_k}(x-z_{i})).
\end{equation*}
Since $\rho_k\to 1$, it holds that $u_{k,i}\to u$ in $W^{1,p}(\R^d)^m$ as $k\to +\infty$. 
Next, let $(\phi_{i})_{i=0}^N$ be a smooth partition of unity subordinated to the cover $\{\R^d\setminus\overline{D},(B_{i})_{i=1}^N\}$ of $\R^d$ (note that we work on the 'manifold' $\R^d$ and therefore ${\rm supp}(\phi_{i})$ is compactly contained in $B_{i}$ and $\phi_{0}$ vanishes on $D$). We build an ad hoc Lipschitz partition of unity as follows: choose $\delta_0>0$ such that for each $1\leq i\leq N$ we have ${\rm supp}(\phi_i)\subset\subset B_{(1-\delta_0)r_{x_i}}(x_i)$ and then use Lemma \ref{L:optim} (with $\rho=1$ and $\delta=\delta_0$ that are considered as fixed for the rest of the proof) for the finite family of functions $\{u_{k,j}-u\}_{j=1}^N\subset W^{1,p}(B_i,\R^m)$ to obtain $\eta_{k,i}\in W_0^{1,\infty}(B_i)$ such that $0\leq\eta_{k,i}\leq 1$,
\begin{equation*}
\eta_{k,i}=1\quad\text{ on }{\rm supp}(\varphi_i),\qquad\qquad \|\nabla\eta_{k,i}\|_{L^{\infty}(B_i)}\leq C .
\end{equation*}
and for all $1\leq j\leq N$
\begin{equation}\label{eq:productbound}
\|\nabla\eta_{k,i}\otimes (u_{k,j}-u)\|_{L^{\infty}(B_i)}\leq C\|u_{k,j}-u\|_{W^{1,p}(B_i)}\overset{k\to +\infty}{\longrightarrow}0.
\end{equation}
Similar to \cite[Theorem 2]{Koch22}, we now define the Lipschitz partition of unity as follows: for $k\in\N$ we set
\begin{equation*} \varphi_{k,0}=\frac{\phi_0}{\phi_0+\sum_{j=1}^N\eta_{k,j}},\quad \varphi_{k,i}=\frac{\eta_{k,i}}{\phi_0+\sum_{j=1}^N\eta_{k,j}}.
\end{equation*}
Here the denominator is always $\geq 1$ since $\eta_{k,j}=1$ on ${\rm supp}(\phi_j)$. Therefore the gradient of $\varphi_{k,i}$ ($1\leq i\leq N$) satisfies for any vector $a\in\R^m$ the pointwise estimate
\begin{equation}\label{eq:grad_control}
|\nabla\varphi_{k,i}\otimes a|\leq\frac{\left|\nabla\eta_{k,i}\otimes a\right|}{\phi_0+\sum_{i=1}^N\eta_{k,i}}+\frac{\eta_{k,i}\left|\nabla\phi_0\otimes a+\sum_{j=1}^N\nabla\eta_{k,j} \otimes a\right|}{\left(\phi_0+\sum_{i=1}^N\eta_{k,i}\right)^2}\leq |\nabla\phi_0\otimes a|+2\sum_{j=1}^N|\nabla\eta_{k,j}(x)\otimes a|.
\end{equation}
We then set
\begin{equation*}
u_{k}(x)=\sum_{i=1}^{N}\varphi_{k,i}(x)u_{k,i}(x),
\end{equation*}
for which the gradient on $D$ (here $\varphi_{k,0}$ vanishes) can be expressed as
\begin{equation*}
\nabla u_k=\sum_{i=1}^N\varphi_{k,i}\nabla u_{k,i}+\sum_{i=1}^N\nabla\varphi_{k,i}\otimes(u_{k,i}-u).
\end{equation*}
As a convex combination, we still have that $u_k\to u$ in $L^1(D)^m$, while for gradients the estimate \eqref{eq:grad_control} and the uniform gradient bound for $\eta_{k,i}$ imply that
\begin{align*}
\int_{D}|\nabla u_{k}-\nabla u|\dx&\leq \int_{D}\left|\sum_{i=1}^N(\nabla u_{k,i}-\nabla u)\varphi_{k,i}\right|\dx+\int_{D}\left|\sum_{i=1}^N\nabla \varphi_{k,i}\otimes(u_{k,i}-u)\right|\dx
\\
&\leq C\sum_{i=1}^N\int_{D}|u_{k,i}-u|+|\nabla u_{k,i}-\nabla u|\dx\to 0\quad\text{ as $k\to+\infty$ }. 
\end{align*} 
Finally, due to the convexity of $W$, for any $t\in (0,1)$ it holds that
\begin{equation}\label{eq:convexityused}
\int_{D}W(t\nabla u_{k})\dx\leq\sum_{i=1}^N\int_{D}t\varphi_{k,i}W(\nabla u_{k,i}) \dx+(1-t)\int_{D}W\left(\sum_{i=1}^N\frac{t}{1-t}\nabla\varphi_{k,i}\otimes(u_{k,i}-u)\right)\dx.
\end{equation}
Let us discuss the two right-hand side integrals separately. By a change of variables we have
\begin{equation*}
\sum_{i=1}^N\int_{D} t\varphi_{k,i}(x)W(\nabla u_{k,i})\dx=\rho_k^dt\int_{\R^d}\sum_{i=1}^N(\chi_D\varphi_{k,i})(z_{i}+\rho_k(y-z_{i}))W(\nabla u(y))\,\mathrm{d}y.
\end{equation*}
Due to \eqref{eq:grad_control} and the uniform gradient bound for $\eta_{k,i}$, we know that the functions $\varphi_{k,i}$ are equi-Lipschitz with respect to $k$ and $i$, so that for $z_i+\rho_k(y-z_i)\in D$ we have the estimate
\begin{equation*}
\sum_{i=1}^N\varphi_{k,i}(z_i+\rho_k(y-z_i))\leq 1+C\max_{1\leq i\leq N}|z_i+\rho_k(y-z_i)-y|\leq 1+C(1-\rho_k)|z_i-y|\leq 1+C(1-\rho_k).
\end{equation*}
Inserting the bound into the previous equality, we deduce that
\begin{equation*}
\sum_{i=1}^N\int_{D} t\varphi_{k,i}(x)W(\nabla u_{k,i})\dx\leq\rho_k^d(1+C(1-\rho_k))\int_{\R^d}\max_{1\leq i\leq N}\chi_D(z_{i}+\rho_k(y-z_{i}))W(\nabla u(y))\,\mathrm{d}y.
\end{equation*}
The pre-factor of the right-hand side integral converges to $1$, while for the integrand we can use the dominated convergence theorem to infer that $\max_{1\leq i\leq N}\chi_D(z_i+\rho_k(\cdot-z_i))W(\nabla u)\to\chi_DW(\nabla u)$ in $L^1(\R^d)$ (recall that $D$ is open with $|\partial D|=0$). Hence we deduce that
\begin{equation*}
\limsup_{k\to +\infty}\sum_{i=1}^N\int_{D} t\varphi_{k,i}(x)W(\nabla u_{k,i})\dx \leq \int_{D}W(\nabla u(y))\dy.
\end{equation*}
It remains to show that the second integral in \eqref{eq:convexityused} is negligible. Applying \eqref{eq:grad_control} with $a=u_{k,i}(x)-u(x)$ and inserting \eqref{eq:productbound} in the corresponding right-hand side, due to the fact that $\nabla\phi_0=0$ on $D$ we find that
\begin{equation*}
\left|\sum_{i=1}^N\frac{t}{1-t}\nabla\varphi_{k,i}(u_{k,i}-u)\right|\leq\frac{2}{1-t}\sum_{i,j=1}^N|\nabla\eta_{k,j}\otimes (u_{k,i}-u)|\leq \frac{C}{1-t}\sum_{i,j=1}^N\|u_{k,j}-u\|_{W^{1,p}(B_i)}\quad\text{ on }D.
\end{equation*}
The right-hand side converges to $0$ as $k\to +\infty$, so we find a constant $c=c_{t,D,u}$ such that pointwise on $D$
\begin{equation*}
W\left(\sum_{i=1}^N\frac{t}{1-t}\nabla\varphi_{k,i}(u_{k,i}-u)\right)\leq \sup_{|\eta|\leq c}W(\eta)<+\infty,
\end{equation*}
where we used that $W$ is bounded on bounded sets by continuity. The dominated convergence theorem yields that
\begin{equation*}
\lim_{k\to +\infty}(1-t)\int_{D}W\left(\sum_{i=1}^N\frac{t}{1-t}\nabla\varphi_i(u_{k,i}-u)\right)\dx=(1-t)|D|W(0).
\end{equation*}
In total, starting from \eqref{eq:convexityused} we deduce the estimate
\begin{equation*}
\limsup_{t\uparrow 1}\limsup_{k\to+\infty}\int_DW(t\nabla u_k)\dx\leq \int_D W(\nabla u)\dx.
\end{equation*}
Since $tu_k\to tu$ in $W^{1,1}(D)^m$, we can use a diagonal argument with respect to $k$ and $t$ to find a sequence $\widetilde{u}_k$ such that $\widetilde{u}_k\to u$ in $W^{1,1}(D)^m$ and, combined with Fatou's lemma,
\begin{equation*}
\lim_{k\to +\infty}\int_D W(\nabla \widetilde{u}_k)\dx=\int_D W(\nabla u)\dx.
\end{equation*}
Moreover, it holds that ${\rm supp}(\widetilde{u}_k)={\rm supp}(u_{k'})$ for some $k'\in\N$, so it remains to show that all $u_k$ have compact support in $D$. Let $\delta_k\ll 1$ and consider $x\in D$ such that $\dist(x,\partial D)\leq\delta_k$ and a ball $B_i$ with $x\in B_i$. We show that $\varphi_{k,i}(x)u_{k,i}(x)=0$ for such $i$ and $\delta_k$ small enough, which then implies that $u_k(x)=0$, so that $u_k$ has compact support. Since there are only finitely many sets in the covering and 'interior' balls are compactly contained in $D$, for $\delta_k$ small enough we know from our construction of the covering and \eqref{eq:boundarydescription} that, up to a Euclidean motion, with the corresponding cylinders $C_i=C_{x_i}$ and radii $r_i'=r_{x_i}'$ we can write
\begin{equation*}
D\cap C_i=\{(y',y_d)\in C_i:\,y_d<\psi_i(y')\}
\end{equation*}
for a Lipschitz function $\psi_i:B_{r'_{i}}^{d-1}(0)\to (-h_i,h_i)$. Since ${\rm supp}(\varphi_{k,i})={\rm supp}(\eta_{k,i})\subset B_i\subset\subset C_i$, there exists $0<\eta<\min_i r_i$ such that $\varphi_{k,i}(x)=0$ whenever $\dist(x,\partial C_i)\leq\eta$. Hence we can assume that $\dist(x,\partial C_i)>\eta$. We will show that for $k$ large enough  (independent of $x=(x',x_d)$) we have $z_i+\frac{1}{\rho_k}(x-z_i)\in C_i\setminus \overline{D}$, which then implies that $u_{k,i}(x)=0$. Since $x\in C_i$ and $\dist(x,\partial C_i)>\eta$ and the $z_i$ are fixed, it follows that for $\rho_k$ is sufficiently close to $1$ we have $z_i+\frac{1}{\rho_k}(x-z_i)\in C_i$. Hence, in order to show that this point does not belong to $D$, it suffices to show that in the local coordinates (where $z_i=(0,-h_i)$) we have 
\begin{equation}\label{eq:notinD}
\psi_i\left(\frac{1}{\rho_k}x'\right)< -h_i+\frac{1}{\rho_k}(x_d+h_i).  
\end{equation}
Let $d_x\in\partial D$ be such that $|x-d_x|=\dist(x,\partial D)$. Note that $d_x\in C_i$ (otherwise the line from $x$ to $d_x$ intersects $\partial C_i$ at a distance less than $\delta_k\ll\eta$), so that we can write $d_x=(y',\psi_i(y'))$ for some $y'\in B_{r'_i}^{d-1}(0)$. To show \eqref{eq:notinD}, let $L_i$ be the Lipschitz constant of $\psi_i$. Then we can estimate
\begin{align*}
\psi_i\left(\frac{1}{\rho_k}x'\right)-\left(\frac{1}{\rho_k}-1\right)h_i-\frac{1}{\rho_k}x_d&\leq \underbrace{\psi_i\left(y'\right)-x_d}_{\leq |d_x-x|\leq\delta_k}+L_i\left|y'-\frac{1}{\rho_k}x'\right|-\left(\frac{1}{\rho_k}-1\right)h_i-\left(\frac{1}{\rho_k}-1\right)x_d
\\
&\leq\delta_k +L_i\left(\frac{1}{\rho_k}-1\right)|x'|+L_i\underbrace{|y'-x'|}_{\leq |d_x-x|\leq\delta_k}-\left(\frac{1}{\rho_k}-1\right)(h_i+x_d)
\\
&\leq (L_i+1)\delta_k+\left(\frac{1}{\rho_k}-1\right)(L_i|x'|-h_i-x_d)
\\
&\leq (L_i+2)\delta_k+\left(\frac{1}{\rho_k}-1\right)(L_i|x'|-h_i-\psi(y'))
\\
&\leq (L_i+2)\delta_k+\frac{1}{2}\left(\frac{1}{\rho_k}-1\right)(\underbrace{-h_i-\inf_{|y'|\leq r'_i}\psi(y')}_{=:\kappa_i\overset{\eqref{eq:nottouching}}{<}0}),
\end{align*}
where we used \eqref{eq:rx_small} in the last estimate. Choosing $\delta_k\ll \left(\frac{1}{\rho_k}-1\right)$, it clearly follows that the right-hand side is negative and we conclude the proof for the case \eqref{eq:pgrowthappendix}.

If we assume \eqref{eq:A4appendix} instead, we can show that the analysis reduces to the case that $u\in L^{\infty}(D)^m$. Indeed, consider for $s\gg 1$ the componentwise truncation $T_su$ at level $s$ (cf. \eqref{eq:defTruncation}). Then $T_su\in W^{1,1}_0(D)^m$ and $T_su\to u$ in $W^{1,1}(D)^m$ as $s\to +\infty$ by the dominated convergence theorem (both for the function and the gradient). Moreover, by the non-negativity of $W$ we have
\begin{equation*}
\int_{D}W(\nabla T_s u)\dx\leq \int_{D}W(\nabla u)\dx+\int_{\{|u|_{\infty}\geq s\}}W(\nabla T_s u)\dx,
\end{equation*}
where $|u|_{\infty}=\max_i|u_i|$. We show that the last integral vanishes as $s\to +\infty$, which then yields
\begin{equation*}
\limsup_{s\to +\infty}\int_{D}W(\nabla T_s u)\dx\leq \int_{D}W(\nabla u)\dx.
\end{equation*}
The reverse inequality for the $\liminf$ follows from Fatou's lemma since $W$ is lower semicontinuous and $\nabla T_su\to \nabla u$ pointwise almost everywhere. For said integral, we use \eqref{eq:A4appendix} to estimate
\begin{equation*}
\int_{\{|u|_{\infty}\geq s\}}W(\nabla T_s u)\dx\leq C\int_{\{|u|_{\infty}\geq s\}}W(\nabla u)\dx+C|\{|u|_{\infty}\geq s\}|.
\end{equation*} 
The last term vanishes as $s\to +\infty$ since $|u|_\infty,W(\nabla u)\in L^1(D)$. Hence we have shown that
\begin{equation*}
\lim_{s\to+\infty}\left(\|T_su-u\|_{W^{1,1}(D)}+\left|\int_D W(\nabla T_su)-W(\nabla u)\dx\right|\right)=0
\end{equation*}
and by a diagonal argument it suffices to show the claim for $u\in L^{\infty}(D)^m$. In this case we can argue is in the first part (directly working with the $k$-independent partition of unity $\{\phi_i\}_{i=1}^N$). The only difference occurs when proving that the second integral in \eqref{eq:convexityused} is negligible. Since now $u\in L^{\infty}(\R^d)^m$ and $\|u_{k,i}\|_{L^{\infty}(\R^d)}\leq \|u\|_{L^{\infty}(\R^d)}$, we find a constant $c=c_{t,D,u}$ such that
\begin{equation*}
W\left(\sum_{i=1}^N\frac{t}{1-t}\nabla\phi_{i}(u_{k,i}-u)\right)\leq \sup_{|\eta|\leq c}W(\eta)<+\infty.
\end{equation*}
Then again
\begin{equation*}
\lim_{k\to +\infty}(1-t)\int_{D}W\left(\sum_{i=1}^N\frac{t}{1-t}\nabla\phi_i(u_{k,i}-u)\right)\dx=(1-t)|D|W(0)
\end{equation*}
and we conclude the proof as in the case \eqref{eq:pgrowthappendix}.

\end{proof}

\end{document}